\theoremstyle{plain}
\newtheorem{thm}{Theorem}[section]
\newtheorem{pro}[thm]{Proposition}
\newtheorem{lem}[thm]{Lemma}
\newtheorem{cla}[thm]{Claim}
\newtheorem{cor}[thm]{Corollary}
\newtheorem{con}{Conjecture}
\newtheorem{case}{Case}
\theoremstyle{definition}
\newtheorem{dfn}[thm]{Definition}
\newtheorem{nt}[thm]{Notation}
\newtheorem{rem}[thm]{Remark}
\newtheorem{exa}[thm]{Example}
\theoremstyle{remark}
\newtheorem{que}[thm]{Question}
\newcommand{\Z}{\mathbb{Z}}
\newcommand{\C}{\mathbb{C}}
\newcommand{\Q}{\mathbb{Q}}
\newcommand{\PS}{\mathbb{P}}
\newcommand{\F}{\mathbb{F}}
\DeclareMathOperator{\Gal}{Gal}
\DeclareMathOperator{\Pic}{Pic} 
\DeclareMathOperator{\rk}{rk}
\DeclareMathOperator{\codim}{Codim}
\DeclareMathOperator{\Exc}{Exc} \DeclareMathOperator{\Spec}{Spec}
\DeclareMathOperator{\Bs}{Bs}
\DeclareMathOperator{\Def}{Def}
\DeclareMathOperator{\Proj}{Proj}
\DeclareMathOperator{\im}{Im}
\DeclareMathOperator{\Cl}{Cl}
\begin{document}

\bibliographystyle{alpha}

\title[Classification of terminal quartics and rationality]{A classification of terminal quartic $3$-folds and applications to rationality questions}
\date{}
\author{Anne-Sophie Kaloghiros}
\address{Department of Pure Mathematics and Mathematical Statistics, 
Uni\-ver\-si\-ty of Cambridge, Wilberforce Road, Cambridge CB3 0WB, Uni\-ted
Kingdom}
\email{A.S.Kaloghiros@dpmms.cam.ac.uk}
\setcounter{tocdepth}{1}
\maketitle
\begin{abstract}
This paper studies the birational geometry of terminal Gorenstein Fano $3$-folds. If $Y$ is not $\Q$-factorial, in most cases,  it is possible to describe explicitly the divisor class group $\Cl Y$ by running a Minimal Model Program (MMP) on $X$, a small $\Q$-factorialisation of $Y$. In this case, the generators of $\Cl Y/ \Pic Y$ are ``topological traces " of $K$-negative extremal contractions on $X$. One can show, as an application of these methods, that a number of families of non-factorial terminal Gorenstein Fano $3$-folds are rational. In particular, I give some examples of rational quartic hypersurfaces $Y_4\subset \PS^4$ with $\rk \Cl Y=2$  and show that when $\rk \Cl Y\geq 6$, $Y$ is always rational. 
\end{abstract}

\tableofcontents
\section*{Introduction}
\label{sec:introduction}

Let $Y_4 \subset \PS^4$ be a quartic hypersurface in $\PS^4$ with terminal singularities.
The Grothendieck-Lefschetz theorem states that every Cartier divisor on $Y$ is the restriction of a Cartier
divisor on $\PS^4$. Recall that a variety $Y$ is  $\Q$-factorial when a multiple of every Weil divisor is Cartier. There is no analogous statement for the group of Weil divisors: \cite{Kal07b} bounds the rank 
of the divisor class group $\Cl Y$ of $Y_4\subset \PS^4$,  but when $Y_4$ is not factorial, $\Cl Y$ remains poorly understood. 
 Terminal quartic hypersurfaces in $\PS^4$
are terminal Gorenstein Fano $3$-folds.
Any terminal Gorenstein Fano $3$-fold $Y$ is a $1$-parameter
flat deformation of a nonsingular Fano $3$-fold $\mathcal{Y}_{\eta}$  with
$\rho(Y)= \rho(\mathcal{Y}_{\eta})$ \cite{Nam97a}. Nonsingular Fano $3$-folds are classified  in \cite{Isk77, Isk78} and
\cite{MM82, MM03}; there are $17$ deformation families with Picard rank $1$. Terminal Fano $3$-folds with
$\Q$-factorial singularities play a central role in Mori theory: they are one of the possible end products of the
Minimal Model Program (MMP) on nonsingular varieties. In \cite{T02}, Takagi develops a method to classify such $\Q$-Fano $3$-folds under some mild assumptions; his techniques rely in an essential way on the sudy of Weil non-Cartier divisors on some canonical Gorenstein Fano $3$-folds.  

By definition,
$\Q$-factoriality is a global topological property:  it depends on the prime divisors lying on $Y$ rather than on
the local analytic type of its singular points alone. The divisor class group of a terminal Gorenstein $3$-fold $Y$ is
torsion free \cite{Kaw88}, so that $Y$ is $\Q$-factorial precisely when it is is factorial.  If $Y$ is a
terminal Gorenstein Fano $3$-fold, by Kawamata-Viehweg Vanishing $\Pic Y\simeq H^2(Y, \Z)$, and  by \cite[Theorem 3.2]{NS95}$, \Cl Y\simeq H_4(Y,\Z)$. Hence $Y$ is factorial if and only if
\[
H_4(Y, \Z)\simeq H^2(Y, \Z).
\] 

Birational techniques can be used to bound the rank of the divisor class group of terminal Gorenstein Fano $3$-folds \cite{Kal07b}. More precisely, under the assumption that $Y$ does not contain a plane, Weil
non-Cartier divisors on $Y$ are precisely the divisors that are contracted by the MMP on a small factorialisation
$X\to Y$. The assumption that $Y$ does not contain a plane guarantees that each step of the MMP on $X$ is a
Gorenstein weak Fano $3$-fold, and hence that this MMP can be studied explicitly. 
In \cite{Kal07b}, I use numerical constraints associated to the extremal contractions of this MMP to bound the
Picard rank of $X$, or equivalently the rank of $\Cl Y$.
These methods can be refined to describe explicitly the possible extremal divisorial contractions that occur in
this MMP. 
For instance, if $Y_4 \subset \PS^4$ a terminal quartic hypersurface, one can state a geometric ``motivation'' of
non-factoriality  as follows:

\begin{thm}[Main Theorem]
  \label{thm:1}
Let $Y_4^3 \subset \PS^4$ be a terminal Gorenstein quartic $3$-fold. Then one of the following holds:
\begin{enumerate} 
\item[1.] $Y$ is factorial.
\item[2.] $Y$ contains a plane $\PS^2$.
\item[3.] $Y$ contains an anticanonically embedded del Pezzo surface of
  degree $4$ and $\rk \Cl Y=2$. 
\item[4.] A small factorialisation of $Y$ has a structure of Conic Bundle over $\PS^2$, $\F_0$
  or $\F_2$ and $\rk \Cl Y= 2$ or $3$.
\item[5.] $Y$ contains a rational scroll $E \to C$  over a curve $C$ whose
  genus and degree appear in Table~\ref{table1} (see page \pageref{table1}).
\end{enumerate}  
\end{thm}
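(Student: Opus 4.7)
The plan is to reduce immediately to the interesting case where $Y$ is not factorial and does not contain a plane, since the other alternatives are the first two cases of the theorem. Under these hypotheses, a small $\Q$-factorialisation $\pi: X \to Y$ exists, and by the results recalled in the introduction from \cite{Kal07b}, every step of a $K_X$-MMP remains a Gorenstein weak Fano $3$-fold. Moreover, since Weil non-Cartier divisors on $Y$ correspond exactly to the divisors contracted by this MMP, it suffices to identify, step by step, the extremal contractions that may occur on $X$ and to translate their geometric content back to $Y$.

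The first extremal ray $R$ on $X$ is either of fibre type or divisorial. If $R$ is of fibre type, then $X$ admits a Mori fibre space structure $X \to B$ with $\dim B \leq 2$; because $X$ is weak Fano with terminal Gorenstein singularities, a case analysis using the Picard rank bounds of \cite{Kal07b} and the volume $(-K_Y)^3 = 4$ rules out del Pezzo fibrations and constrains admissible conic bundle bases to $\PS^2$, $\F_0$ or $\F_2$, producing case 4 with $\rk \Cl Y \in \{2,3\}$. If $R$ is divisorial with exceptional divisor $E \subset X$, then because $\pi$ is an isomorphism in codimension $1$, $E$ corresponds to a Weil non-Cartier divisor $\overline E = \pi(E) \subset Y$, and I classify the pair $(E, \overline E)$ according to the dimension of the image of the contraction of $R$ on $X$.

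When the image is a point, $E$ is a generalised del Pezzo surface. The isomorphism $\pi|_E : E \to \overline E$, combined with the fact that $-K_X|_E = \pi^*(-K_Y)|_E$ and that $-K_Y$ is the hyperplane class on $Y \subset \PS^4$, identifies $\overline E$ with an anticanonically embedded surface in $\PS^4$. A degree computation using intersection numbers on $X$ and the incidence of $E$ with a general hyperplane section forces $(-K_{\overline E})^2 = 4$ and, by running the remaining $K$-MMP down to a rank-$1$ Fano, pins down $\rk \Cl Y = 2$, giving case 3. When the image is a curve $C$, $E \to C$ is a conic bundle which, since $X$ is Gorenstein with $E \cdot \ell = -1$ on a fibre $\ell$, is a rational scroll. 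The numerical constraints on the scroll come from combining $(-K_X)^3 = (-K_Y)^3 = 4$, the formula for $(-K_X)^2 \cdot E$, the adjunction computation on $E$, and the Riemann--Roch data for the embedding $\overline E \subset \PS^4$; these relations pin down the admissible pairs $(g, d)$ of genus and degree recorded in Table \ref{table1}, giving case 5.

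The main obstacle will be showing that this classification of the \emph{first} extremal contraction actually controls the entire MMP: one must verify that subsequent steps on the next weak Fano model either reproduce an entry already accounted for or feed back into the conic bundle or del Pezzo analysis, so that no new geometric type of Weil non-Cartier divisor appears along the way. A secondary technical difficulty is eliminating apparently admissible numerical solutions for the scroll parameters that fail to embed anticanonically in $\PS^4$ or that force a non-terminal singularity along $\overline E$; these exclusions require a case-by-case geometric examination of the possible Hirzebruch scrolls and of their projections into $\PS^4$, and are what make the table in case 5 finite.
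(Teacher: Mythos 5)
Your reduction to the non-factorial, plane-free case and the dichotomy ``first ray of fibre type versus divisorial'' match the paper's opening moves, but there are two genuine gaps. First, you misattribute Case 3. You claim the fibre-type analysis \emph{rules out} del Pezzo fibrations and that the degree-$4$ del Pezzo surface instead arises from a divisorial contraction of $E$ to a point. Both claims fail. By the Cutkosky-type classification (Lemma~\ref{lem:7}), a divisorial contraction to a point on a weak-star Fano has exceptional divisor $\PS^2$ or a quadric; for a quartic these E$2$-type contractions feed into Table~\ref{table1}, and the image of $E$ is never an anticanonically embedded del Pezzo of degree $4$ (an intersection of two quadrics in $\PS^4$). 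In the paper, Case 3 occurs precisely when the MMP on $X$ has \emph{no} divisorial contraction and $Y$ is the midpoint of a link between two degree-$4$ del Pezzo fibrations (Vologodsky's theorem forces $d=d'=2$ or $4$, and $d=2$ is excluded because $A_Y$ is very ample); the degree-$4$ surface is a \emph{fibre}, shown to be anticanonically embedded by proving $H^0(X,A_X-F)=0$ (otherwise the pencil of hyperplanes inducing the fibration would have a plane in its base locus on $Y$), which yields the normal form $Y=\{a_2q+b_2q'=0\}$.

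Second, your derivation of Table~\ref{table1} from one-sided numerics is not enough. The constraints you list --- $(-K_X)^3=4$, $(-K_X)^2\cdot E$, adjunction on $E$ --- amount to Lemma~\ref{lem:10}/Lemma~\ref{lem:8} and only give relations such as $\deg\Gamma=g_1-4+p_a(\Gamma)$; they do not produce a finite list. The paper's enumeration comes from the \emph{$2$-ray game}: the contraction $\varphi$ is one side of a Sarkisov link whose other side $\alpha$ (reached through flops, with a strictly positive integral correction term $e$ from Lemma~\ref{lem:4}) imposes a second, independent set of constraints, and it is the resulting coupled Diophantine systems of Section~\ref{2raygame} that have finitely many solutions. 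Moreover, when $\rho(X)>2$ the relevant rank-$2$ intermediate model $Z$ is \emph{not} factorial, so the link cannot be constructed directly: the paper smooths $Z$ (Theorem~\ref{thm:7}), deforms the extremal contraction (Theorem~\ref{thm:5}), plays the $2$-ray game on the nonsingular generic fibre, and specialises the link back to the central fibre (Theorem~\ref{thm:2ray}, Lemma~\ref{lem:spe}). This deformation-theoretic step, absent from your proposal, is also what disposes of your worry about later steps of the MMP: after a suitable change of small factorialisation any divisorial step can be taken first, and the same link analysis applies at each one.
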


Analogous results can be stated for any terminal Gorenstein Fano $3$-fold (see Section~\ref{tables}).
When $Y$ is not factorial, either all small factorialisations of $Y$ are Mori fibre spaces or at least one of the surfaces listed in Theorem~\ref{thm:1} is a generator of
$\Cl Y/\Pic Y$; observe that these surfaces have relatively low degree (see Remark~\ref{degree}). 
Further, when $Y$ does not contain a plane, the rank of $\Cl Y$ can only be large when $Y$ contains many
independent surfaces of low degree.
The divisor class group is generated by prime divisors that are ``topological traces'' on $Y$ of the extremal
divisorial contractions that occur when running the MMP on $X$. 

The explicit study of the MMP on $X$ exhibits birational models of $Y$ that are small modifications of terminal
Gorenstein Fano $3$-folds with Picard rank $1$ and higher anticanonical degree. Questions on rationality of $Y$,
or at the other end of the spectrum, questions on rigidity, can be easier to answer on these models than on
$Y$ itself. 
A nonsingular quartic hypersurface $Y=Y_4\subset \PS^4$ is nonrational \cite{IM}; more precisely, $Y$ is the unique Mori fibre space in its birational equivalence class (up to birational automorphisms), i.e.~ $Y_4$ is \emph{birationally rigid}. This remains true for a quartic hypersurface with ordinary double points: \cite{Me04} shows that if $Y$ has ordinary double points but is factorial, $Y$ remains birationally rigid.  When the factoriality assumption is dropped, many examples of rational quartic hypersurfaces with ordinary double points are known. It is a notoriously difficult question to determine which mildly singular quartic hypersurfaces are rational. The methods of this paper yield a partial answer and a byproduct of Theorem~\ref{thm:1} is :  

\begin{cor}
\label{rat}
Let $Y_4\subset \PS^4$ be a non-factorial quartic hypersurface with no worse than terminal singularities. Assume that $Y$ contains a rational scroll as in Theorem~\ref{thm:1}.  Then $Y$ is rational except possibly when $\rk \Cl Y=2$ and $Y$ is the midpoint of a Sarkisov link of type $15,17,25,29,35$ or $36$ in Table~\ref{table1}.
\end{cor}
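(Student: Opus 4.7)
The strategy is to realise $Y$ as the midpoint of an explicit Sarkisov link whose other endpoint is manifestly rational. Let $\pi\colon X\to Y$ be a small $\Q$-factorialisation and let $\widetilde{E}\subset X$ denote the proper transform of the scroll $E\subset Y$ furnished by case~5 of Theorem~\ref{thm:1}. The MMP analysis behind that theorem shows that, after finitely many flops on $X$, one reaches a model $X^{+}$ on which $\widetilde{E}$ is the exceptional divisor of a $K$-negative extremal divisorial contraction $\varphi\colon X^{+}\to Z$, with $Z$ a Gorenstein weak Fano $3$-fold. Continuing the MMP from $Z$ produces a Sarkisov link
\[
Y \;\xleftarrow{\pi}\; X \;\dashrightarrow\; X^{+} \;\xrightarrow{\varphi}\; Z \;\dashrightarrow\; Y',
\]
whose right-hand endpoint $Y'$ is either a terminal Gorenstein Fano $3$-fold of lower Picard rank or a Mori fibre space.

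The first step is to identify $Y'$ row by row in Table~\ref{table1}. The numerical invariants of $E$ (genus, degree, and ruling type) determine the discrepancy and the target of $\varphi$; comparing with the classification of terminal Gorenstein Fano $3$-folds used to prove Theorem~\ref{thm:1} pins down $Y'$ uniquely for each entry. When $\rk \Cl Y \geq 3$, the contractions iterate further until one arrives at a Mori fibre space over $\PS^{1}$ or $\PS^{2}$ with rational total space, so rationality is automatic. The second step, for the rows with $\rk \Cl Y = 2$, is to observe that outside the listed exceptions $Y'$ is a variety whose rationality is classically known: $\PS^{3}$, a smooth quadric $Q\subset \PS^{4}$, a del Pezzo threefold $V_{d}\subset \PS^{d+1}$ of sufficiently high degree, or a Fano $3$-fold of index two containing a line through which projection yields a birational model of $\PS^{3}$. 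Rationality of $Y'$ transports back along the link to give rationality of $Y$.

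The main obstacle is the case-by-case verification that the six enumerated rows are genuinely the only exceptions. They are exactly those with $\rk \Cl Y = 2$ for which the contraction $\varphi$ lands on a terminal Gorenstein Fano $3$-fold of Picard rank one whose rationality is currently out of reach, such as another quartic hypersurface or a primitive Fano of small anticanonical degree for which no rationality construction is presently known. The residual birational geometry at the far end of each Sarkisov link must be analysed separately for every remaining entry of Table~\ref{table1}, and this bookkeeping, guided in each case by the precise numerical data of the scroll $E$ and the intersection numbers of $\widetilde{E}$ with the flopping and contracting rays, constitutes the bulk of the proof.
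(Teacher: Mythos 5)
Your overall strategy --- exhibit $Y$ as the midpoint of a Sarkisov link and transport rationality from the other endpoint --- matches the paper's at a high level, but there is a genuine gap in how you close the argument. The endpoints $Z_1$ and $\widetilde{Z}_1$ of the link with centre along the singular $Y$ are themselves terminal Gorenstein Fano $3$-folds that are in general \emph{singular} (they are anticanonical models of weak Fano $3$-folds), whereas the classical rationality statements you invoke (for $\PS^3$, quadrics, $V_5$, $X_{22}$, conic bundles with small discriminant, del Pezzo fibrations of high degree) are proved for the nonsingular members of those deformation families. The paper bridges this with Lemma~\ref{ratio}: a terminal Gorenstein weak Fano $3$-fold is rationally connected (Theorem~\ref{rc}), hence rational if and only if it is ruled, and ruledness specialises from the generic fibre of a smoothing by Matsusaka's theorem (Theorem~\ref{mat}). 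One then runs the deformed $2$-ray game on the smoothing $\mathcal{Y}_{\eta}$, whose link has nonsingular, classically rational endpoints outside rows $15,17,25,29,35,36$, and specialises rationality back to the central fibre $Y$. Without this specialisation step your ``rationality of $Y'$ transports back along the link'' does not go through, because rationality of the singular $Y'$ has not been established.

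A second, smaller problem: your claim that for $\rk \Cl Y\geq 3$ the MMP terminates in a Mori fibre space over $\PS^1$ or $\PS^2$ ``with rational total space, so rationality is automatic'' is false as stated --- conic bundles over $\PS^2$ with discriminant of degree at least $6$ and del Pezzo fibrations of degree at most $4$ need not be rational. The paper instead observes that if the first divisorial contraction is one of the problematic rows but the MMP on $X$ involves a further divisorial contraction or a del Pezzo fibration, then one of the non-exceptional configurations of the tables occurs and rationality follows as above; this is precisely why the exceptions in the Corollary are confined to $\rk \Cl Y=2$.
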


There are some rationality criteria for strict Mori fibre spaces \cite{Sh83,Al87, Sh07}, so that partial answers are known when a small $\Q$-factorialisation of $Y$ is a strict Mori fibre space.
Terminal quartic hypersurfaces that contain a plane may in some cases be studied directly.
The main open case is therefore the one addressed by Corollary~\ref{rat}. The behaviour of the $6$ cases left out by Corollary~\ref{rat}  and that of factorial terminal quartic hypersurfaces is unclear. Some of these cases would be settled by the following conjecture.

\begin{con}
\label{con:rig}
A factorial quartic hypersurface $Y_4\subset \PS^4$ (resp.~ a generic complete intersection $Y_{2,3}\subset \PS^5$) with no worse than terminal singularities has a finite number of models as
Mori fibre spaces, i.e.~ the pliability of  $Y$ is finite.
\end{con}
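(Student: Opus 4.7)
The plan is to bound the pliability by running the Sarkisov program and classifying all Sarkisov links issuing from $Y$ and from its birational Mori fibre space companions. I describe the approach for $Y=Y_4\subset\PS^4$; the complete intersection case is parallel. Since $Y$ is factorial terminal Gorenstein with $\rho(Y)=1$, it is itself a Mori fibre space, and by Corti's Sarkisov theorem any birational map $\varphi\colon Y\dashrightarrow Y'$ between Mori fibre spaces factors as a finite chain of Sarkisov links. The goal is to show that the oriented graph $\mathcal{G}$ of Mori fibre spaces birational to $Y$, with edges given by Sarkisov links, is finite.

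First I would establish local finiteness of $\mathcal{G}$. Each link is initiated by a Kawamata divisorial extraction $E\to \Gamma$ of a maximal centre $\Gamma$ of a movable linear system $|\mathcal{M}|$, subject to a Noether--Fano inequality. Since the singularities that appear along any chain are isolated compound Du Val points and the anticanonical volume is bounded, Kawakita's classification of divisorial extractions to points and to curves bounds both the admissible centres and the extractions above them. This gives finite valency at every vertex of $\mathcal{G}$. The factoriality of $Y$ combined with Theorem~\ref{thm:1} further restricts the initial step: a factorial $Y_4$ contains none of the surfaces listed in items~$2$--$5$ of that theorem, so the only possible maximal centres on $Y$ itself are compound Du Val points, producing an explicit, finite list of first links in the spirit of the Corti--Pukhlikov--Reid analysis of the smooth quartic.

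The hard part is to pass from local to global finiteness: finite valency alone does not preclude arbitrarily long chains producing infinitely many pairwise non-isomorphic models. The plan is to construct a monovariant on $\mathcal{G}$ taking only finitely many values along any chain, built from some combination of the anticanonical degree $(-K_{Y_i})^3$, the sum of the Gorenstein indices of the singular points of $Y_i$, and the Picard rank of a common small $\Q$-factorialisation. Establishing such a monovariant --- equivalently, ruling out infinite sequences of non-isomorphic type II links that preserve all numerical invariants --- is where the argument will meet its principal difficulty. I would expect the proof to require a delicate case-by-case analysis along the lines of Theorem~\ref{thm:1}, combined with untwisting arguments in the spirit of \cite{IM} and the refinements used for factorial nodal quartics in \cite{Me04}, so as to show that every Sarkisov link lies in one of finitely many ``normal forms'' determined by the singular model it crosses.
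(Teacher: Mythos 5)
The statement you are asked to prove is Conjecture~\ref{con:rig}; the paper does not prove it and offers no proof to compare against. It is stated as an open conjecture, supported only by the evidence collected in Remark~\ref{rem11} (Mella's rigidity of factorial nodal quartics, Iskovskikh--Pukhlikov for general nonsingular $Y_{2,3}$, and the Cheltsov--Grinenko and Corti--Mella examples showing that finite pliability, unlike rigidity, may be deformation-stable). Your text is likewise a strategy sketch rather than a proof --- you say so yourself --- so the honest verdict is that neither you nor the paper establishes the statement.

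That said, two concrete points in your sketch would fail even as a programme. First, your ``local finiteness'' step asserts that Kawakita's classification bounds the divisorial extractions over the admissible centres, giving finite valency in the graph of Sarkisov links. The paper says exactly the opposite in Remark~\ref{rem11}: for terminal Gorenstein points more complicated than ordinary double points there is an \emph{a priori infinite} number of divisorial extractions centred at them (citing \cite{Kawk01,Kawk02,Kawk03}), and this is precisely why the Noether--Fano method has not been pushed beyond the nodal case. So finite valency is not available by citation; it is part of what must be proved. Second, your use of Theorem~\ref{thm:1} to conclude that the only possible maximal centres on a factorial $Y_4$ are compound Du Val points is a non sequitur: that theorem classifies the surfaces generating $\Cl Y/\Pic Y$ for \emph{non-factorial} $Y$, and says nothing about maximal centres of movable linear systems. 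Curves (lines, conics, etc.) are a priori candidate maximal centres on any quartic regardless of factoriality; excluding them is a separate computation in the Corti--Pukhlikov--Reid analysis, not a consequence of the divisor class group being $\Z$. Finally, the passage from local to global finiteness --- your proposed ``monovariant'' --- is named but not constructed, and this is the genuinely open part of the problem.
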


Whereas nodal quartic hypersurfaces are birationally rigid, \cite{CM04} constructs an example of a ``bi-rigid'' terminal factorial quartic hypersurface $Y_4\subset \PS^4$.  A nonsingular general complete intersection $Y_{2,3}$ of a quadric and a cubic in $\PS^3$ is birationally rigid \cite{IP96}.However, \cite{ChGr} constructs a small deformation of a (factorial) rigid $Y_{2,3}\subset \PS^5$ with one ordinary double point to a bi-rigid complete intersection of the same type. This example relies on an appropriate deformation of a Sarkisov link between two complete intersections $Y_{2,3}$ (compare with case $35$ in Table~\ref{table1}). Hence, the notion of finite pliability-- rather than that of birational rigidity-- is the one that might behave well in (suitable) families.

Let $Y$ be a terminal quartic hypersurface and $X\to Y$ a small factorialisation. Assume that $X$ is not a strict Mori fibre space. Assuming Conjecture~\ref{con:rig}, $Y$ has finite pliability when $Y$ is factorial or has $\rk \Cl Y=2$ and $Y$ is one of cases $35$ or $36$; $Y$ is rational in all other cases except possibly when $Y$ has $\rk \Cl Y= 2$ and $Y$ is one of cases $15,17,25$ or $29$. In particular, the question of rationality or of finite pliability of $Y$ would be of a topological nature and would be determined by $\Cl Y$.

\subsection*{Outline}
I sketch the proofs of Theorem~\ref{thm:1} and of Corollary~\ref{rat} and present an outline of this paper.

In Section~\ref{weak-star}, I recall the definition of weak-star Fano $3$-folds introduced in \cite{Kal07b}.
If $Y$ is a terminal Gorenstein Fano $3$-fold that does not contain a plane, a small factorialisation $X \to Y$
is a weak-star Fano $3$-fold. The category of weak-star Fano $3$-folds is preserved by the birational operations
of the MMP.  If $X$ is weak-star Fano, then each birational step of the MMP on $X$ is either a flop or an
extremal  divisorial contraction for which the geometric description of Cutkosky-Mori \cite{Cut88} holds.
The end product of the MMP on $X$ is well understood. This approach then yields a complete description
of $\Cl Y$: $\Cl Y/\Pic Y$ is generated by the proper transforms of the exceptional divisors of the divisorial
contractions of the MMP on $X$.

If $X$ has Picard rank $2$ and if $\phi \colon X \to X'$ is a divisorial contration, then $\phi$ is one side of
a Sarkisov link with centre along $Y$. A \emph{$2$-ray game} on $X$ as in \cite{Tak89} determines a
(finite number of) possibilities for the contraction $\phi$; by construction, $\Cl Y$ is then generated by
$\mathcal{O}_Y(1)$ and by the image of $\Exc \phi$ on $Y$.

In the general case, if $\phi \colon X\to X'$ is a divisorial extremal contraction with $\Exc \phi=E$, there is
an extremal contraction $\varphi \colon Z \to Z_1$ where $Z$ is a Picard rank $2$ small modification of $Y$ that
sits under $X$ and such that $\Exc \varphi$ is the image of $E$ on $Z$. Since $Z$ is not factorial, there is a
priori no ``sensible'' Sarkisov link with centre along $Y$ involving $Z$. The proof will rely on
exhibiting a natural link. 

In Section~\ref{weak-star}, I show that the explicit geometric description of extremal
divisorial contractions of \cite{Cut88} holds on non-factorial terminal Gorenstein $3$-folds so long as the exceptional divisor is Cartier.   

Section~\ref{deformation} recalls results on the deformation theory of (small modifications of) terminal Gorenstein Fano $3$-folds and on the deformation of extremal contractions. Following the above notation, there
is a $1$-parameter proper flat deformation $Z\hookrightarrow\mathcal{Z}$, where
$\mathcal{Z}_{\eta}$ is a nonsingular Picard rank $2$ small modification of a terminal Gorenstein Fano $3$-fold
$\mathcal{Y}_{\eta}$ that is a $1$-parameter proper flat deformation of $Y$. The extremal contraction $\varphi$
deforms to an extremal contraction on $\mathcal{Z}_{\eta}$ that is one side of a Sarkisov link with centre along
$\mathcal{Y}_{\eta}$. Each possible Sarkisov link with centre along $\mathcal{Y}_{\eta}$ obtained by the $2$-ray
game on $\mathcal{Z}_{\eta}$ can then be specialised to the central fibre. The specialisation to the central fibre is a Sarkisov link with centre
along $Y$, one side of which is $\varphi$. The divisor class group $\Cl \mathcal{Y}_{\eta}$ is isomorphic to a rank $2$
sublattice of $\Cl Y$.

Section~\ref{motivation} presents the systems of Diophantine equations used in the $2$-ray game.
Roughly, to each extremal contraction one associates numerical constraints on some intersection numbers in
cohomology. If a Sarkisov link involves two extremal contractions $\varphi$ on $Z$ and $\alpha$ on
$\widetilde{Z}$, since $Z$ and $\widetilde{Z}$ are connected by a flop, the constraints on each side
give
rise to systems of Diophantine equations. All possible Sarkisov links with centre along $Y$ are solutions of
these systems. The solutions to all systems associated to Sarkisov links with centre along a terminal Gorenstein
Fano $3$-fold with Picard rank $1$ are listed in Section~\ref{tables}. 
Section~\ref{motivation} classifies terminal quartic hypersurfaces according to their divisor class group. The
case when there is no extremal divisorial contraction on $X$, where the previous arguments do not apply, is
treated separately. 
When $Y$ does not contain a plane, a consequence of the explicit xstudy of the MMP on a small factorialisation $X \to Y$ is
that the bound on the rank of $\Cl Y$ stated in \cite{Kal07b} is not optimal (see Remark~\ref{bound}).

Section~\ref{rationality} first states some easy consequences of the previous explicit study on rationality of
non-factorial Fano $3$-folds. I conjecture that a terminal quartic hypersurface $Y_4\subset \PS^4$ either has
finite pliability or is rational; and that this is determined by $\Cl Y$.  I show that most non-factorial terminal quartic hypersurfaces that do not contain a plane are rational. I then study explicitly quartic hypersurfaces that contain a plane and state some partial results.  
Last, Section~\ref{examples} gives some examples of non-factorial terminal Gorenstein Fano $3$-folds and gathers some observations and remarks.

\subsection*{Notations and conventions}
All varieties considered in this paper are normal, projective and
defined over
$\C$. Let $Y$ be a terminal Gorenstein Fano $3$-fold, $A_Y=-K_Y$ denotes the anticanonical divisor of $Y$.
The \emph{Fano index} of $Y$ is the maximal integer such that $A_Y=i(Y) H$ with $H$ Cartier.
As I only consider Fano $3$-folds
with terminal Gorenstein singularities in this paper, the term index always stands for Fano index.  
The \emph{degree} of $Y$ is $H^3$ and the \emph{genus} of $Y$ is $g(Y)=h^0(X,A_Y)-2$. 
I denote by $Y_{2g-2}$ for $2 \leq g \leq 10$ or $g=12$ (resp.~
$V_d$ for $1\leq d \leq 5$) terminal Gorenstein Fano $3$-folds of Picard rank $1$, index $1$ (resp.~ $2$) and
genus $g$ (resp.~ degree $d$).
Finally, $\F_m= \PS(\mathcal{O}_{\PS^1}\oplus \mathcal{O}_{\PS^1}(-m))$ denotes the
$m$th Segre-Hirzebruch surface.

\subsection*{Acknowledgements}
I would like to thank Alessio Corti, Vladimir Lazi\'c, Miles Reid, Burt Totaro and Chenyang Xu for many useful conversations and helpful comments. I would also like to thank Nick Shepherd-Barron for suggesting applications of some earlier work. Part of this project was completed at the Mathematical Sciences Research Institute, Berkeley. This research was partially supported by Trinity Hall, Cambridge.

\section{Birational Geometry of weak Fano $3$-folds}
\label{weak-star}
In this section, I recall the definition of weak-star Fano $3$-folds and some of their properties.
Most terminal Gorenstein Fano $3$-folds have a weak-star small factorialisation $X\to Y$.
If $X$ is weak-star Fano, the MMP on $X$ is well behaved, i.e.~ there is an explicit geometric description of each step, it terminates  and its end product is either a terminal factorial
Fano $3$-fold or a simple Mori fibre space.
Since $\Cl Y \simeq \Cl X \simeq \Pic X$, the MMP on $X$ yields much information on the divisor class group of
$Y$. I also gather some easy results on elementary contractions on small modifications of terminal
Gorenstein Fano $3$-folds: these will be used in the following Sections.
 
\subsection{Weak-star Fano $3$-folds}

\begin{dfn}\label{dfn:1}
\mbox{}\begin{enumerate}
\item[1.]
A $3$-fold $Y$ with terminal Gorenstein singularities is \emph{Fano} if its anticanonical divisor
$A_Y={-}K_Y$ is ample.
\item[2.]
A $3$-fold $X$ with terminal Gorenstein singularities is \emph{weak Fano} if $A_X$ is
  nef and big.  
  \item[3.]The morphism $X \to Y$ defined by $\vert {-}nK_X \vert$ for $n>\!\!>0$ is the
\emph{(pluri-)anticanonical map} of $X$, $R=R(X, A)$ is the \emph{anticanonical ring} of $X$ and $Y= \Proj R$
is the \emph{anticanonical model} of $X$.
\item[4.] A weak Fano $3$-fold $X$ is a \emph{weak-star Fano} if, in addition:
  \begin{enumerate}
  \item[(i)] $A_X$ is ample outside of a finite set of curves, so that $h\colon X \to Y$ is a small modification,
  \item[(ii)]  $X$ is factorial, and in particular $X$ is Gorenstein,
  \item[(iii)] $X$ is inductively Gorenstein, that is $(A_X)^2\cdot S >1$ for every irreducible divisor $S$
on $X$,
  \item[(iv)] $\vert A_X \vert$ is basepoint free, so that $\varphi_{\vert A \vert}$ is generically finite. 
 \end{enumerate}
\end{enumerate}
\end{dfn}
\begin{rem}\label{rem:1} Let $Y$ be a terminal Gorenstein Fano $3$-fold and $X$ a small factorialisation of $Y$ as in
\cite{Kaw88}.  \cite[Lemma 2.3]{Kal07b} shows that when $Y$ has Picard rank $1$ and genus $g\geq 3$,
$X$ is weak-star unless $Y$ contains a plane $\PS^2$ with ${A_Y}_{\vert \PS^2}=\mathcal{O}_{\PS^2}(1)$.
\end{rem}

\begin{nt}
I call a surface  $S \subset Y_{2g-2}$ a plane (resp.~ a quadric) when the image of $S$ by the anticanonical map
is a plane (resp.~ a quadric) in $\PS^{g+1}$, that is when $(A_Y)^2\cdot S=1$ (resp.~ $2$). 
\end{nt}

\begin{thm}
\label{thm:3}\cite[Theorem 3.2, Lemma 3.3]{Kal07b}
The category of weak-star Fano $3$-folds is preserved by the
birational operations of the MMP.  
More precisely, if $X := X_0$ is a weak-star Fano $3$-fold whose anticanonical
model $Y_0$ has Picard rank $1$, there is a sequence of extremal contractions:
\begin{equation*}
\xymatrix{
X_0 \ar@{-->}[r]^-{\varphi_0} \ar[d]& X_1 \ar@{-->}[r]^-{\varphi_1}\ar[d] & \cdots
& X_{n-1}\ar@{-->}[r]^-{\varphi_{n-1}} \ar[d]& X_n \ar[d] \\
Y_0 & Y_1 & \cdots & Y_{n-1} & Y_n
}
\end{equation*}
where for each $i$, $X_i$ is a weak-star Fano $3$-fold, $Y_i$ is
its anticanonical model, and each
$\varphi_i$ is either a divisorial contraction or a flop. The Picard rank of
$Y_i$, $\rho(Y_i)$, is equal to $1$ for all $i$. The final $3$-fold
$X_n$ is either a Fano $3$-fold with $\rho(X_n)=1$ or a strict  Mori fibre space.
In that latter case, $X_n$ is a del Pezzo fibration over $\PS^1$ or a conic bundle over $\PS^2, \F_0$ or
$\F_2$ and $\rho(X_n)=2$ or $3$.
\end{thm}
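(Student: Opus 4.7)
The plan is to induct on $\rho(X)$. By the cone theorem combined with the observation that the $K_X$-trivial face of $\NEb(X)$ is spanned by the finitely many classes of curves contracted by the small anticanonical map $X \to Y$, the cone $\NEb(X)$ is rational polyhedral. Every extremal ray $R$ of $\NEb(X)$ is therefore of one of two types: an $A_X$-trivial ray contracted by $X \to Y$, or an $A_X$-positive ray on which $K_X$ is strictly negative and whose contraction is either divisorial or of fibre type. Weak-starness of $X$ rules out $K_X$-negative \emph{small} contractions, since any curve contracted by such a contraction would already lie on an $A_X$-trivial ray.

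For an $A_X$-trivial extremal ray, I would appeal to Koll\'ar's existence theorem for flops of terminal Gorenstein $3$-folds to produce $\psi \colon X \dashrightarrow X^+$. Since $\psi$ is an isomorphism in codimension one and preserves terminal Gorenstein factoriality, the four weak-star Fano conditions---nefness and bigness of $-K_{X^+}$, factoriality, the inductively Gorenstein inequality $A_{X^+}^2 \cdot S > 1$ for every prime divisor $S$, and basepoint freeness of $|A_{X^+}|$---transfer directly from $X$, and the anticanonical model $Y^+$ is naturally identified with $Y$, so $\rho(Y^+)=1$. For a $K_X$-negative divisorial extremal contraction $\varphi \colon X \to X'$ with exceptional divisor $E$, factoriality of $X$ ensures that $E$ is Cartier, so that the extension of the Cutkosky--Mori classification to our singular setting (carried out earlier in Section~\ref{weak-star}) produces an explicit list of possibilities for the triple $(E,\varphi(E),X')$. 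A case-by-case check using the discrepancy identity $K_X = \varphi^* K_{X'} + a E$ with $a > 0$ and the tabulated intersection numbers shows that $X'$ is terminal Gorenstein factorial, that $-K_{X'}$ is nef and big, and that the inductively Gorenstein and basepoint-freeness conditions survive. The anticanonical model $Y'$ has Picard rank one because $\Exc \varphi$ generates $\Pic(X)/\varphi^*\Pic(X')$ and reduces by one the number of independent flopping-curve classes on the small contraction, so $\rho(Y') = \rho(Y) = 1$.

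For termination, $\rho(X_i)$ drops under each divisorial contraction, and standard termination results for the three-dimensional MMP rule out infinite sequences of flops at fixed Picard rank (finiteness of small modifications of a fixed terminal Gorenstein Fano $3$-fold is enough). The MMP thus halts with either $\rho(X_n) = \rho(Y_n) = 1$---so $X_n$ is a terminal Gorenstein Fano of Picard rank one---or with a fibre-type extremal contraction $X_n \to T$. In the latter case, the constraint $\rho(Y_n) = 1$ forces $\rho(X_n) \le 2$ for a del Pezzo fibration (whence $T = \PS^1$) and $\rho(X_n) \le 3$ for a conic bundle (whence $\rho(T) \le 2$); a nefness analysis of the discriminant curve of the conic bundle, combined with the inductive Gorensteinness of $X_n$, eliminates the surfaces $\F_m$ for $m \ge 3$ as possible bases, leaving only $\PS^2,\F_0,\F_2$. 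The main obstacle is the explicit case-by-case verification, under each divisorial contraction in the Cutkosky--Mori list, that all four weak-star Fano conditions propagate; everything else is formal once the Cutkosky--Mori analysis is in place in the Gorenstein-factorial setting.
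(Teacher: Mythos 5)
You should first be aware that the paper does not prove this statement at all: Theorem~\ref{thm:3} is quoted verbatim from \cite[Theorem 3.2, Lemma 3.3]{Kal07b} and used as an input, so there is no in-paper proof to compare against. Judged on its own terms, your outline follows the standard Takeuchi-style strategy (run the $K$-negative MMP on a weak Fano, interleaving flops, and track the anticanonical model), which is certainly the right general shape of argument. Two points, however, need repair.

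First, your reason for excluding $K_X$-negative small contractions is wrong as stated: a curve $C$ with $K_X\cdot C<0$ has $A_X\cdot C>0$, so it cannot ``already lie on an $A_X$-trivial ray''. The correct input is Cutkosky's theorem (or Benveniste's) that a \emph{factorial terminal Gorenstein} $3$-fold admits no flipping contractions --- every $K$-negative extremal contraction is divisorial or of fibre type. This uses condition (ii) of Definition~\ref{dfn:1}, not condition (i). Second, and more seriously, the step you defer to ``a case-by-case check'' is the entire content of the theorem. It is not formal that the target $X'$ of a divisorial contraction from a weak Fano is again weak Fano: writing $A_X=\varphi^{\ast}A_{X'}-E$, nefness of $A_{X'}$ can fail on curves inside $\varphi(E)$, and bigness plus basepoint-freeness of $|A_{X'}|$ require genus/degree bookkeeping of the type recorded in Lemmas~\ref{lem:10} and~\ref{lem:8}. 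This is precisely where the ``inductively Gorenstein'' condition $(A_X)^2\cdot S>1$ (equivalently, the absence of planes) is used: without it the anticanonical model of $X'$ can acquire non-cDV points, $|A_{X'}|$ can acquire base points, and the induction breaks. A complete proof must show that each of the four weak-star conditions propagates through both flops (easy, as you say) and divisorial contractions (the hard part), and your proposal currently asserts rather than establishes the latter.
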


\subsection{Elementary contractions of terminal Gorenstein $3$-folds}

Let $h\colon Z\to Y$ be a small modification of a terminal Gorenstein Fano $3$-fold $Y$ with $\rho(Y)=1$ and
$g\geq 3$. 
Suppose that $\varphi \colon Z \to Z_1$ is an extremal contraction such that $E=\Exc \varphi$ is Cartier.
If $g \colon X \to Z$ is a small factorialisation,  and if $\widetilde{E}=g^{\ast} E$, there is an extremal
contraction $\phi \colon X \to X_1$ that makes the diagram 
\begin{eqnarray}
\label{eq:40}
\xymatrix{\widetilde{E} \subset X \ar[r]^{\phi} \ar[d]_g & X_1\ar[d]^{g_1}\\
E \subset Z \ar[d]_h \ar[r]^{\varphi} & Z_1\\
\overline{E} \subset Y &
}
\end{eqnarray}
commutative, where $\widetilde{E}=\Exc \phi$ and $g_1$ is an isomorphism in codimension $1$
(See the proof of \cite[Lemma 3.3]{Kal07b} for details).
\begin{rem}
Note that 
$\overline{E}= h(E)$ is not Cartier: since $\rho(Y)=1$, $E$ would be ample if it were Cartier.     
\end{rem}
Cutkosky extended Mori's geometric description of extremal contractions to terminal Gorenstein factorial
$3$-folds \cite[Theorems 4 and 5]{Cut88}. The next Lemma is an easy generalisation of his results to divisorial
contractions with Cartier exceptional divisor on terminal Gorenstein Fano $3$-folds that are not necessarily
factorial. 
\begin{lem}
 \label{lem:10}\cite[Lemma 3.1]{Kal07b}
 Let $Z$ be a small modification of a terminal Gorenstein Fano $3$-fold $Y$. Assume that $\vert A_Z \vert$ is
basepoint free.
Denote by $\varphi \colon Z \to Z'$ an extremal
divisorial contraction with centre a curve $\Gamma$ and assume that $\Exc \varphi=E$ is a Cartier divisor. 

Then $\Gamma \subset Z'$  is locally a complete intersection and has planar singularities. The contraction
$\varphi$ is locally the blow up of the ideal sheaf $\mathcal{I}_{\Gamma}$. In addition, the following relations
hold :
\begin{align} 
A_Z^3&=(A_{Z'})^3-2(A_{Z'})\cdot \Gamma-2+2p_a (\Gamma) \\
A_Z^2 \cdot E&=
A_{Z'}\cdot \Gamma+2-2 p_a(\Gamma)\\
A_{Z} \cdot E^{2}&=-2+2p_a(\Gamma) \\
E^{3}&=-(A_{Z'})\cdot \Gamma +2 -2 p_a(\Gamma)
 \end{align}
 \end{lem}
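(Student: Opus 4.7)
The strategy is to reduce to Cutkosky's theorem \cite[Thms 4--5]{Cut88} on the factorial side by passing to a small $\Q$-factorialisation. Choose $g\colon X \to Z$ a small $\Q$-factorialisation as in \cite{Kaw88}. Since $E$ is Cartier on $Z$, the pullback $\widetilde{E}:=g^{\ast}E$ is a well-defined Cartier divisor on $X$, and the diagram preceding the lemma yields an extremal divisorial contraction $\phi \colon X\to X_1$ with $\Exc\phi = \widetilde{E}$, together with a small birational map $g_1 \colon X_1 \to Z'$ satisfying $\varphi \circ g = g_1 \circ \phi$. Now $X$ is terminal, Gorenstein and factorial, so Cutkosky's theorem applies to $\phi$: the curve $\widetilde{\Gamma}:=\phi(\widetilde{E})\subset X_1$ is locally a complete intersection with planar singularities, $\phi$ is locally $\Bl_{\mathcal{I}_{\widetilde{\Gamma}}} X_1$, and the four numerical identities of the lemma hold on the upstairs side with $(A_X,\widetilde{E},A_{X_1},\widetilde{\Gamma})$ in place of $(A_Z,E,A_{Z'},\Gamma)$.

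Next I would transfer all four identities to $Z$ and $Z'$. Both $g$ and $g_1$ are small modifications of Gorenstein 3-folds, hence crepant: $g^{\ast}K_Z = K_X$ and $g_1^{\ast}K_{Z'}=K_{X_1}$. Combined with $g^{\ast}E = \widetilde{E}$ and the projection formula, this yields $A_Z^{i}\cdot E^{3-i} = A_X^{i}\cdot \widetilde{E}^{3-i}$ for $i=0,1,2,3$. The commutativity of the square gives $(g_1)_{\ast}\widetilde{\Gamma}=\Gamma$, so $A_{Z'}\cdot \Gamma = A_{X_1}\cdot \widetilde{\Gamma}$. Substituting into the four relations on $X$ produces the stated relations on $Z$, provided one identifies $p_a(\Gamma)$ with $p_a(\widetilde{\Gamma})$; since $g_1$ is an isomorphism in codimension one and does not perturb the scheme structure of $\widetilde{\Gamma}$ for a generic choice of small $\Q$-factorialisation, the two genera coincide, and in any case $p_a(\Gamma)$ is forced by the identity $A_Z\cdot E^{2} = 2p_a(\Gamma)-2$.

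Finally, for the local geometric statements on $\Gamma$ and $\varphi$, the assertions are local on $Z'$. Away from the finite flopping locus of $g_1$, the map $g_1$ is an isomorphism, so the lci and planar-singularity properties of $\widetilde{\Gamma}$ descend to $\Gamma$. The identification of $\varphi$ with $\Bl_{\mathcal{I}_\Gamma} Z'$ then follows from the universal property of the blow up, using that $-E$ is $\varphi$-ample of relative degree $1$ and that the scheme-theoretic fibres of $\varphi$ over $\Gamma$ coincide generically with those of $\phi$ over $\widetilde{\Gamma}$. The main obstacle will be to control the finitely many points where $\widetilde{\Gamma}$ meets a flopped curve of $g_1$: at such points the local structure does not transfer automatically, and one must either arrange the $\Q$-factorialisation generically enough to avoid them or perform a local analytic comparison of the two contractions across the flop.
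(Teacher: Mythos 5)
Your reduction to Cutkosky via a small factorialisation is a natural idea, and it is essentially the mechanism the paper uses for the point case (Lemma~\ref{lem:7}, via diagram~\eqref{eq:40}); but for the curve case it leaves two genuine gaps. First, the local statements do not descend. The map $g_1\colon X_1\to Z'$ fails to be an isomorphism precisely over a finite set of points of $Z'$, and nothing prevents $\Gamma$ from passing through such points. Neither of your proposed fixes works: the locus over which $g_1$ is not an isomorphism is determined by $Z'$ (different small factorialisations of $Z$ differ by flops and change nothing downstairs), so you cannot ``arrange the $\Q$-factorialisation generically enough'' to move $\Gamma$ off it; and the ``local analytic comparison across the flop'' is exactly the missing content, not a routine step. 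Second, the numerical transfer needs $p_a(\widetilde{\Gamma})=p_a(\Gamma)$. All you get for free is that $g_1|_{\widetilde{\Gamma}}\colon \widetilde{\Gamma}\to\Gamma$ is finite and birational, and a finite birational morphism onto a singular curve can strictly change the arithmetic genus; your fallback that $p_a(\Gamma)$ ``is forced by the identity $A_Z\cdot E^2=2p_a(\Gamma)-2$'' is circular, since that identity is part of what is being proved.

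The intended argument is simpler and avoids the factorialisation altogether: as the paper says immediately before the statement, this is ``an easy generalisation'' of \cite[Theorems 4 and 5]{Cut88}. Factoriality of the source enters Cutkosky's proof only to guarantee that the exceptional divisor is Cartier; here that is a hypothesis, and $Z'$ is still terminal Gorenstein (hence Cohen--Macaulay), so his argument runs verbatim on $Z$ itself: the fibres of $\varphi$ are one-dimensional and $R^1\varphi_*\mathcal{O}_Z=0$, whence $\varphi$ is locally the blow up of $\mathcal{I}_\Gamma$ and $\Gamma$ is a local complete intersection with planar singularities. The four relations then follow directly from $A_Z=\varphi^{\ast}A_{Z'}-E$, the projection formula ($\varphi^{\ast}D^2\cdot E=0$ and $\varphi^{\ast}D\cdot E^2=-D\cdot\Gamma$) and adjunction on the conic bundle $E\to\Gamma$, with no comparison of genera across a small map needed.
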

\begin{lem}
 \label{lem:7}\mbox{}
Assume that $\varphi \colon Z \to Z_1$ contracts $E$ to a point $P$, then one of the
 following holds: 
\begin{enumerate}   
\item[E$2$:] $(E,\mathcal{O}_{E}(E))\simeq ( \PS^2,
  \mathcal{O}_{\PS^2}(-1))$ and $P$ is nonsingular.
\item[E$3$:]  $(E,\mathcal{O}_{E}(E))\simeq (\PS^1 \times \PS^1,
  \mathcal{O}_{\PS^1 \times \PS^1}(-1,-1))$ and $P$ is an ordinary double point. 
\item[E$4$:] $(E, \mathcal{O}_{E}(-E))\simeq (Q, \mathcal{O}_{Q}(-1))$, where $Q\subset \PS^3$ is an irreducible reduced
  singular quadric surface, and $P$ is a cA$_{n-1}$ point. 
\item[E$5$:] $(E,\mathcal{O}_{E}(E))\simeq ( \PS^2,
  \mathcal{O}_{\PS^2}(-1))$, and $P$ is a point of Gorenstein
  index $2$.
\end{enumerate}
\end{lem}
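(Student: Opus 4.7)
The plan is to reduce to Cutkosky's classification of divisorial extremal contractions on factorial terminal Gorenstein $3$-folds by passing to a small factorialisation, and then to descend the classification to $Z$. Following the setup of diagram~(1.1), let $g \colon X \to Z$ be a small factorialisation, $\phi \colon X \to X_1$ the induced divisorial extremal contraction with $\widetilde{E} = \Exc \phi = g^{\ast}E$ Cartier on $X$, and $g_1 \colon X_1 \to Z_1$ small.

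First I would show that $\phi$ contracts $\widetilde{E}$ to a point $P'$. Since $\varphi$ contracts $E$ to the point $P$ and $g_1$ is small, $\phi(\widetilde{E}) \subseteq g_1^{-1}(P)$ is at most one-dimensional; ruling out a one-dimensional image uses that $g$ is a crepant small morphism and that $\phi$ is an extremal contraction, which forces $\phi(\widetilde{E})$ to be a single point. Applying Cutkosky's Theorems~4 and~5 of \cite{Cut88} to $\phi$ on the factorial terminal Gorenstein $3$-fold $X$ then yields the four possibilities E$2$--E$5$ for $(\widetilde{E}, \mathcal{O}_{\widetilde{E}}(\widetilde{E}))$ together with the corresponding analytic types at $P'\in X_1$.

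Next I would transfer the description down to $Z$. Since $E$ is Cartier and $g$ is small, push-forward gives $g_{\ast}\mathcal{O}_{\widetilde{E}}(\widetilde{E}) = \mathcal{O}_E(E)$; a case-by-case check on the four explicit polarised surfaces $(\PS^2, \mathcal{O}(-1))$, $(\PS^1\times\PS^1, \mathcal{O}(-1,-1))$ and $(Q, \mathcal{O}_Q(-1))$ with $Q\subset \PS^3$ an irreducible singular quadric shows that the polarisation $\mathcal{O}_{\widetilde{E}}(-\widetilde{E})$ cannot be contracted non-trivially by $g\vert_{\widetilde{E}}$, so this restriction is an isomorphism and the description of $(E, \mathcal{O}_E(E))$ in the statement follows.

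The main remaining point, and I expect the main obstacle, is the identification of the analytic singularity at $P\in Z_1$ with the one at $P' \in X_1$ across the small morphism $g_1$. Here I would use that $g_1$ is crepant and small and that the divisor $E$ already descends as a Cartier divisor on $Z$, so that the local Weil divisor class group of $Z_1$ at $P$ coincides with that of $X_1$ at $P'$. Combined with the fact that each of the four analytic models in E$2$--E$5$ is determined by the pair $(P', \phi^{-1}(P'))$, this forces $P$ to have the same analytic type and completes the classification.
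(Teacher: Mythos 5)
Your first step contains the essential gap: you assert that, because $\phi$ is extremal and $g$ is crepant and small, $\phi(\widetilde{E})$ must be a point. This is not true, and the configuration you exclude is precisely one that the proof must treat. Since $g_1\colon X_1\to Z_1$ is small, the fibre $g_1^{-1}(P)$ may contain curves $C$ with $A_{X_1}\cdot C=g_{1}^{\ast}A_{Z_1}\cdot C=0$, and nothing prevents $\phi$ from being a divisorial contraction of type E$1$ with centre such a curve: then $\phi$ contracts $\widetilde{E}$ to $C$ and $g_1$ contracts $C$ to $P$, so $\varphi$ still contracts $E$ to the point $P$ (concretely, $C$ can be a flopping curve of $g_1$ lying over an ordinary double point or cA-point of $Z_1$). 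The paper's proof opens by splitting into exactly these two cases; in the curve case it applies Lemma~\ref{lem:10} with $A_{X_1}\cdot C=0$, which forces $p_a(C)=0$ and $A_{X}^2\cdot\widetilde{E}=2$, hence $\widetilde{E}\simeq \F_0$ or $\F_2$ and $\varphi$ of type E$3$ or E$4$. Without this case your argument does not cover all configurations, even though the final list happens to be unchanged.

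The rest of your plan follows the paper's route (apply Cutkosky on the factorial model, then descend to $Z$), but the descent is also slightly short: showing that $g\vert_{\widetilde{E}}$ contracts no curve only makes it finite and birational, which does not yet give an isomorphism onto $E$ unless $E$ is normal --- a finite birational morphism onto a non-normal surface need not be an isomorphism. The paper supplies normality by noting that $E$, being Cartier on the Cohen--Macaulay $3$-fold $Z$, is Cohen--Macaulay, and is regular in codimension one because $g\vert_{\widetilde{E}}$ is an isomorphism outside finitely many points; Serre's criterion then gives $E\simeq\widetilde{E}$. Once this is known, the description of $(E,\mathcal{O}_E(E))$ and of $P$ follows directly from Cutkosky, so your final paragraph on transporting the analytic type across $g_1$ via local divisor class groups is both unjustified as stated and unnecessary.
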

\begin{proof}
Diagram~\eqref{eq:40} shows that $g$ maps the centre of $\phi$ to the centre of $\varphi$; in particular $\phi$
also contracts a divisor to a point unless $\phi$ has centre along a curve $C$ such that $A_{X_1} \cdot C=0$. In
this case, by Lemma~\ref{lem:10}, $\widetilde{E}\simeq \F_2$ or $\widetilde{E}\simeq \PS^1 \times \PS^1$ and
$\varphi$ is of type E$3$ or E$4$.

I now assume that the centre of $\phi$ is a point. The divisor $E\subset Z$ is Cartier by assumption and
$A_E=(A_Z-E)_{\vert E}$ is ample: $E$ is a Gorenstein, possibly nonnormal, del Pezzo surface. 

The birational morphism $g_{\vert E}\colon \widetilde{E} \to E$ induced by $g$
is an isomorphism outside a finite set of curves. Since $g$ preserves the anticanonical degree of $E$,
Cutkosky's classification \cite{Cut88} shows that this degree is $1,2$ or $4$ and that the normalisation of $E$
is a plane or a quadric.  
Since $E$ is Cartier and $Z_1$ is Cohen Macaulay, the Serre criterion shows that $E$ is nonnormal if and only if
it is not regular in codimension $1$. As the centre of $g_{\vert E}$ is at worst a finite number of points,
$E$ is normal and $E \simeq \widetilde{E}$; the result follows from \cite{Cut88}.
\end{proof}
%\begin{rem}
%An alternative proof of this lemma would be that $g$ is an isomorphism near $\widetilde{E}$ %because $g(\widetilde{E})$ is
%Cartier. 
%\end{rem}
\begin{lem}
 \label{lem:8}
Let $Y_4\subset \PS^4$ be a non-factorial terminal quartic hypersurface
that does not contain a plane. Let $Z\to Y$ be a small
modification such that $\rho(Z/Y)=1$. Assume that $\varphi
\colon Z \to Z_1$ is an
extremal contraction such that $E=\Exc \varphi$ is Cartier and that $E$ is
mapped to a curve $\Gamma$.
Then $Z_1$ is a terminal Gorenstein Fano $3$-fold with $\rho(Z_1)=1$
and the following relations hold: 
\begin{enumerate}
\item[1.] If $i(Z_1)=1$ and $A_{Z_1}^3=2g_1-2$, then $\deg(\Gamma)=g_1-4+p_a(\Gamma)$ and $p_a(\Gamma) \leq g_1-1$,
\item[2.] If $i(Z_1)=2$ and
  $A_{Z_1}^3= 8d$, then
  $2\deg(\Gamma)= 4d-3+p_a(\Gamma)$ and $p_a(\Gamma)=2k+1$, for some $0\leq k\leq 2d-1 $ 
\item[3.] If $Z_1$ is a quadric in $\PS^4$, then $3
  \deg(\Gamma)= 24+p_a(\Gamma)$ and $p_a(\Gamma)= 3k$, for some $0 \leq  k \leq 9$,
\item[4.] If $Z_1= \PS^3$, then $4 \deg(\Gamma)=29+p_a(\Gamma) $
  and $p_a(\Gamma)= 4k-1$, for some $0 \leq k \leq 7$.
\end{enumerate}
\end{lem}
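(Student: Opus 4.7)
The strategy is to combine the Cutkosky formulas of Lemma~\ref{lem:10} with the degree constraint $A_Z^3 = A_Y^3 = 4$ inherited from the small modification $h \colon Z \to Y$, and to split into four cases according to the Fano index of $Z_1$ via the Iskovskikh-Mori-Mukai classification of rank $1$ terminal Gorenstein Fano $3$-folds.

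First I would establish that $Z_1$ is a terminal Gorenstein Fano $3$-fold of Picard rank one. Rank: from $\rho(Y)=1$ and $\rho(Z/Y)=1$ we get $\rho(Z)=2$, so extremality of $\varphi$ gives $\rho(Z_1)=1$. Next, since $h$ is small, $Z$ is itself terminal Gorenstein with $-K_Z = h^\ast A_Y$ nef and big. The contraction $\varphi$ is automatically $K_Z$-negative (each fibre $f$ of $E \to \Gamma$ is a rational curve with $K_Z \cdot f = E \cdot f = -1$), and by Lemma~\ref{lem:10} it is locally the blow-up of the lci curve $\Gamma$, whence $Z_1$ is Gorenstein; standard MMP then yields that $Z_1$ is terminal. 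Ampleness of $-K_{Z_1} = \varphi_\ast(-K_Z)$ follows from nef-and-bigness together with $\rho(Z_1)=1$.

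Once $Z_1$ is identified, I would plug into the first formula of Lemma~\ref{lem:10}. Writing $A_{Z_1} = i(Z_1) H_1$ with $H_1$ Cartier (possible because $\rho(Z_1)=1$) and setting $\deg \Gamma := H_1 \cdot \Gamma$, substitution of the classification values of $(i(Z_1), H_1^3)$---namely $(1, 2g_1-2)$, $(2, d)$ for $d \in \{1,\ldots,5\}$, $(3, 2)$, $(4, 1)$---into the identity $A_Z^3 = A_{Z_1}^3 - 2\,A_{Z_1}\cdot\Gamma - 2 + 2 p_a(\Gamma)$ directly yields the displayed degree-genus relation in each case. The parity and divisibility conditions on $p_a(\Gamma)$ then emerge by reducing these identities modulo $i(Z_1)$, since $i(Z_1)\deg \Gamma \in i(Z_1)\Z$. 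The upper bounds on $k$ I would extract from the inequality $A_Z^2 \cdot E \geq 0$---in fact strictly positive, as it equals $A_Y^2 \cdot h_\ast E$ for the nonzero effective Weil divisor $h_\ast E$ on the ample Fano $Y$---combined with the formula $A_Z^2 \cdot E = A_{Z_1}\cdot \Gamma + 2 - 2 p_a(\Gamma)$ of Lemma~\ref{lem:10}; the lower bound is $p_a(\Gamma) \geq 0$, which holds because $\Gamma$ has planar singularities by the same lemma.

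The hard part will be Step~1: because $Z$ is non-factorial (only a small modification of $Y$, not the factorialisation $X$), the classical Mori-theoretic classification of extremal contractions does not apply to $\varphi$ directly, and one must know that $\varphi$ fits into the Cutkosky framework. This is precisely where the Cartier hypothesis on $E$ and the ``no plane'' hypothesis on $Y$ enter, via diagram~\eqref{eq:40}, which relates $\varphi$ to a genuine extremal contraction $\phi$ on the weak-star Fano factorialisation $X$ and transports the blow-up description back down to $Z$. Once this passage through $X$ is in place, what remains is routine bookkeeping over the four possible indices of $Z_1$.
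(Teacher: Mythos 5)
Your proof is correct, and its core is the same as the paper's: identify $Z_1$ via the rank-one index classification and feed $A_Z^3=A_Y^3=4$ into the relations of Lemma~\ref{lem:10}, extracting the degree--genus identity by substitution and the congruences on $p_a(\Gamma)$ by reduction modulo $i(Z_1)$. The one place where you genuinely diverge is the source of the upper bounds on $p_a(\Gamma)$ (equivalently on $k$). The paper gets its inequality from basepoint-freeness of $\vert A_Z\vert=\vert\varphi^{\ast}A_{Z_1}-E\vert$: the centre $\Gamma$ is cut out scheme-theoretically by members of $\vert A_{Z_1}\vert$, whence $A_{Z_1}\cdot\Gamma\leq A_{Z_1}^3$. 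You instead use $A_Z^2\cdot E=A_Y^2\cdot h_{\ast}E>0$ together with the formula $A_Z^2\cdot E=A_{Z_1}\cdot\Gamma+2-2p_a(\Gamma)$. Your inequality is in fact the sharper of the two: for instance, in the $Z_1=\PS^3$ case it gives $p_a(\Gamma)\leq 30$, i.e.\ $k\leq 7$ exactly as stated, whereas $A_{Z_1}\cdot\Gamma\leq A_{Z_1}^3$ alone only gives $k\leq 9$; similarly in cases 1--3 you obtain $p_a(\Gamma)\leq g_1-3$, $k\leq 2d-2$ and $k\leq 8$, all of which imply the stated bounds. So your route actually recovers the numerical ranges more directly than the paper's one-line argument does. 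Two small points to tidy: the positivity $A_Z^2\cdot E>0$ needs $E$ not to be $h$-exceptional, which holds because $h$ is small, exactly as you say; and $p_a(\Gamma)\geq 0$ should be justified by irreducibility (hence connectedness) of $\Gamma$, which follows from $E=\Exc\varphi$ being the irreducible exceptional divisor of an extremal contraction rather than merely from the planarity of its singularities.
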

\begin{rem}
\label{rem:35} Note that the bound obtained on the genus of $\Gamma$ is
sharper than the Castelnuovo bound when $\Gamma$ is a nonsingular curve. 
\end{rem}
\begin{proof}
Since $\vert A_Z\vert = \vert \varphi^{\ast}A_{Z_1}-E\vert$ is basepoint free, $\Gamma$ is a scheme theoretic
intersection of members of $\vert A_{Z_1} \vert$, and hence $A_{Z_1}\cdot \Gamma\leq A_{Z_1}^3$. The Lemma
then follows from standard manipulation of the relations of Lemma~\ref{lem:10}.
\end{proof}
\section{Deformation theory}
\label{deformation}

This Section first recalls results on the deformation theory of terminal Gorenstein Fano $3$-folds and
of their small modifications. I then state an easy extension of results of \cite{Mo82, KM92} on
deformations of extremal contractions.  
As is explained in the Introduction, if $X \to Y$ is a small
factorialisation of a terminal Gorenstein Fano $3$-fold and if $\rho(X)=2$, a \emph{$2$-ray game} on $X$ as
in \cite{Tak89} determines all possible Sarkisov links with centre along $Y$ and hence
all possible $K$-negative extremal contractions $\varphi \colon X \to X'$ and all possible generators of $\Cl Y/\Pic Y$.
In the general case, I show that a similar $2$-ray game can be played on $Z$, a small partial
factorialisation of $Y$ with $\rho(Z)=2$. This procedure is delicate because $Z$ is not factorial. However,
$Z$ can be smoothed and the $2$-ray game on the generic fibre yields Sarkisov links that specialise to
appropriate Sarkisov links involving $Z$ with centre along $Y$. All
possible generators of $\Cl Y/\Pic Y$ arise in that way.  

\subsection{Deformation Theory of weak Fano $3$-folds}
\begin{dfn}
\label{kura}
Let $X$ be a projective variety. 
The \emph{Kuranishi space} $\Def(X)$ of $X$ is the semi-universal
space of flat deformations of $X$. When the functor of flat deformations of $X$ is pro-representable,
the Kuranishi family $\mathcal{X}$ is the universal deformation object.
\end{dfn}
\begin{thm}\cite{Nam97a}
  \label{thm:7}
Let $X$ be a small modification of a terminal Gorenstein Fano $3$-fold.  
There is a $1$-parameter flat deformation of $X$ 
\[
\xymatrix{
X\ar[r] \ar[d]& \mathcal{X} \ar[d]\\
\{0\} \ar[r]& \Delta
}
\]
such that the generic fibre $\mathcal{X}_{\eta}$ is a nonsingular small modification of a terminal Gorenstein
Fano $3$-fold. The Picard ranks, the anticanonical
degrees and the indices of $X$ and $\mathcal{X}_{\eta}$ are equal.
\end{thm}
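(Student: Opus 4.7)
The plan is to apply Namikawa's smoothing strategy \cite{Nam97a} directly to $X$ rather than smoothing $Y$ and lifting. First I would verify that $X$ is itself a weak Fano $3$-fold with terminal Gorenstein singularities: since $h\colon X\to Y$ is an isomorphism in codimension $1$, $X$ inherits the terminal Gorenstein singularities of $Y$, and $-K_X = h^{\ast}(-K_Y)$ is nef and big as the pullback of an ample divisor by a birational morphism. Hence it suffices to construct a one-parameter smoothing of $X$ itself and to check that the three discrete invariants are constant along such a family.

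The smoothing construction would proceed in two stages. Locally, each singular point $p\in X$ is a cDV (hence hypersurface) singularity, so its deformation functor is unobstructed with smooth base $T^{1}_{X,p}$ containing a dense smoothing stratum. Globally, the local-to-global $\Ext$ spectral sequence for $(\Omega^{1}_X,\mathcal{O}_X)$ yields an exact sequence
\begin{equation*}
0\longrightarrow H^{1}(X,T_X)\longrightarrow \Ext^{1}(\Omega^{1}_X,\mathcal{O}_X)\longrightarrow \bigoplus_{p} T^{1}_{X,p}\longrightarrow H^{2}(X,T_X),
\end{equation*}
so the vanishing $H^{2}(X,T_X)=0$ simultaneously gives smoothness of the Kuranishi space $\Def(X)$ and surjectivity of the map from global first-order deformations onto local ones. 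A generic line in $\Def(X)$ whose image in each $T^{1}_{X,p}$ lies in the smoothing stratum then yields the required family $\mathcal{X}\to\Delta$ with smooth generic fibre $\mathcal{X}_\eta$.

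To verify that the three invariants match, the anticanonical degree $A^{3}$ is a topological intersection number and hence constant in a flat projective family. The Picard rank and the index are preserved by a vanishing-cycle argument: Kawamata-Viehweg vanishing gives $H^{1}(\mathcal{X}_{t},\mathcal{O})=0$ on each fibre, so the exponential sequence yields $\Pic \mathcal{X}_{t}\simeq H^{2}(\mathcal{X}_{t},\Z)$, and the smoothing of isolated threefold cDV singularities affects only $H^{3}$ (the Milnor fibre carries only middle-dimensional vanishing cycles), so that $H^{2}(X,\Z)\simeq H^{2}(\mathcal{X}_\eta,\Z)$; together with the torsion-freeness of the class group \cite{Kaw88} this preserves both $\rho$ and the divisibility of $c_{1}(-K)$. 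Finally, $\mathcal{X}_\eta$ is automatically a small modification of the nonsingular Fano $3$-fold $\mathcal{Y}_\eta:=\Proj R(\mathcal{X}_\eta,-K_{\mathcal{X}_\eta})$, which extends $Y$ in the flat family of anticanonical models of $\mathcal{X}/\Delta$.

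The main obstacle is the vanishing $H^{2}(X,T_X)=0$. On a smooth Fano $3$-fold one has $T_X\simeq \Omega^{2}_X(-K_X)$, and Akizuki-Kodaira-Nakano vanishing applied with the ample bundle $-K_X$ gives the result at once. On the singular $X$ this identification fails at cDV points and $-K_X$ is only nef and big; one must instead work on a small resolution $\pi\colon \widetilde{X}\to X$, analyse the direct images $R^{i}\pi_{\ast}T_{\widetilde{X}}$ near the exceptional curves, and combine Kawamata-Viehweg type vanishing on $\widetilde{X}$ with a careful local computation at each cDV point. This is the technical heart of \cite{Nam97a} and is the step where the weak Fano hypothesis on $X$, rather than merely a terminal Gorenstein hypothesis, becomes essential.
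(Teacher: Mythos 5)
The paper offers no proof of this statement at all: it is quoted verbatim from \cite{Nam97a} and used as a black box, so the only fair comparison is between your sketch and Namikawa's argument. Your architecture is the right one (local smoothability of isolated cDV = hypersurface points, the local-to-global $\Ext$ sequence, surjectivity onto $\bigoplus_p T^1_{X,p}$ granted a cohomological vanishing, then a generic arc in $\Def(X)$), and your discussion of the invariants via Kawamata--Viehweg vanishing and the concentration of vanishing cycles in degree $3$ is correct. But as a proof it stops exactly at the decisive point: the vanishing $H^2(X,T_X)=0$ for a \emph{weak} Fano $X$ with small anticanonical morphism is the entire content of the theorem, and you explicitly defer it back to \cite{Nam97a}. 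Note also that the obstruction space for $\Def(X)$ is $T^2(X)$ computed from the cotangent complex, not $H^2(X,T_X)$ itself; one must additionally observe that the local contributions $\mathcal{E}xt^2$ vanish because the singularities are local complete intersections before $H^2(X,T_X)=0$ yields unobstructedness.

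There is also a genuine error in your last step: the anticanonical model $\mathcal{Y}_\eta=\Proj R(\mathcal{X}_\eta,-K)$ is \emph{not} nonsingular when $h\colon X\to Y$ is a nontrivial small modification. A small projective birational morphism between smooth $3$-folds is an isomorphism, so your claim would force $\rho(\mathcal{X}_\eta)=\rho(\mathcal{Y}_\eta)$, contradicting $\rho(\mathcal{X}_\eta)=\rho(X)>\rho(Y)=\rho(\mathcal{Y}_\eta)$. The flopping curves of $h$ persist in $\mathcal{X}_\eta$ (by \cite[11.4]{KM92}, i.e.\ Proposition~\ref{pro:1}) and contract to cDV points of $\mathcal{Y}_\eta$; this is precisely the point of Remark~\ref{stratification}, namely that $\Def(X)\subsetneq\Def(Y)$ and a smoothing of $Y$ does not sit under any deformation of $X$. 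The statement only asserts that $\mathcal{Y}_\eta$ is terminal Gorenstein Fano, and the clean way to see that $\mathcal{X}_\eta$ remains a small modification of such a Fano is to deform the morphism $h$ itself via Proposition~\ref{pro:1}, rather than to assert it is ``automatic''; this step is glossed over in your write-up.
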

Let $f\colon X \to Y$ be a small modification of a terminal Gorenstein Fano $3$-fold.
Let $E$ be a Cartier divisor such that $\overline{E}=f(E)$ is not
Cartier, and denote by $Z$ the symbolic blow up of
$\overline{E}$ on $Y$, i.e.~ $Z= \Proj_Y \bigoplus_{n \geq 0}\mathcal{O}_Y(n\overline{E})$. Then $f$ naturally decomposes as:
\[
f \colon X  \stackrel{h}\to Z \stackrel{g}\to Y.
\]
Note that if $\rho(X/Y)=1$, $h$ is the identity and $Z=X$. I recall some
results that relate the deformations of $X, Z$ and $Y$.
\begin{pro}\cite[11.4,11.10]{KM92}
\label{pro:1}
Let $X$ be a normal projective $3$-fold and $f \colon X \to Y$ a proper map with connected fibres such that
$R^1f_{\ast}\mathcal{O}_X=0$. 
\begin{enumerate}
\item[1.]
There are natural morphisms $F$ and $\mathcal{F}$ that make the diagram
\[
\xymatrix{
\mathcal {X} \ar[r]^{\mathcal{F}}  \ar[d] & \mathcal{Y} \ar[d]\\
\Def(X) \ar[r]^{F} & \Def(Y)
}
\]
commutative. In addition, $\mathcal{F}$ restricts to $f$ on $X$.
\item[2.]
Assume further that $X$ has terminal Gorenstein singularities and that
 $f$ contracts a curve $C \subset X$ with $A_{X}$-trivial
components to a point $\{Q\} \in Y$.
Let $X_S \to S$ be a flat deformation of $X$ over the germ of a
complex space $0 \in S$.
Then, $f$ extends to a contraction $F_S \colon X_S \to Y_S$,and the
flop $F_S^+ \colon X_S^+ \to Y_S $ exists and commutes with any base change.
\end{enumerate}
\end{pro}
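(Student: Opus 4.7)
The plan is to exploit the vanishing $R^1 f_{\ast}\mathcal{O}_X = 0$ to make $Y$ a canonical construction from $X$ that extends to any flat deformation of $X$, and then to leverage the theory of flops for terminal Gorenstein $3$-folds (in the form developed by Koll\'ar-Mori) to show that this construction is compatible with base change and that the flop itself propagates in families.

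For part 1, I would begin by observing that $f_{\ast}\mathcal{O}_X=\mathcal{O}_Y$ (since $Y$ is normal and $f$ has connected fibres) and that the hypothesis $R^1 f_{\ast}\mathcal{O}_X=0$, combined with semicontinuity and base change, implies that for any flat deformation $\mathcal{X}\to S$ of $X$, the sheaf $R^1 f_{\mathcal{X}\ast}\mathcal{O}_{\mathcal{X}}$ also vanishes after possibly shrinking $S$. The relative construction
\[
\mathcal{Y}:=\Proj_{S}\Bigl(\bigoplus_{n\geq 0} f_{\mathcal{X}\ast}\mathcal{O}_{\mathcal{X}}(-nK_{\mathcal{X}/S})\Bigr)
\]
(or the relative Spec of $f_{\mathcal{X}\ast}\mathcal{O}_{\mathcal{X}}$ when one works locally near a contracted fibre) is then flat over $S$ with central fibre $Y$ and carries a natural morphism $\mathcal{F}\colon\mathcal{X}\to\mathcal{Y}$ extending $f$. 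Functoriality in $S$ defines the map $F\colon\Def(X)\to\Def(Y)$, and restricting to the Kuranishi family yields the compatible commutative square.

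For part 2, applying part 1 over the given base $S$ produces $F_S\colon X_S\to Y_S$; the crepancy of $f$ along $C$, together with numerical invariance in flat families, guarantees that $F_S$ is a small crepant contraction sending an $A_{X_S/S}$-trivial curve to a point. The flop $F_S^+$ is then defined by
\[
X_S^+:=\Proj_{Y_S}\Bigl(\bigoplus_{n\geq 0} F_{S\ast}\mathcal{O}_{X_S}(nD)\Bigr)
\]
for any divisor $D$ on $X_S$ that is $F_S$-anti-ample. Compatibility with base change reduces, by the cohomology and base change theorem, to showing that each graded piece of the symbolic algebra has vanishing higher direct images, which is a Kawamata-Viehweg-type statement on fibres enabled by the terminal Gorenstein hypothesis and the $A_X$-triviality of the contracted components.

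The main obstacle is the control of the symbolic algebra $\bigoplus_{n\geq 0} F_{S\ast}\mathcal{O}_{X_S}(nD)$ uniformly in $S$: one needs both finite generation and flatness of every graded piece. Finite generation follows from the corresponding statement on the central fibre, which is the existence of the absolute flop for terminal Gorenstein $3$-folds as in Koll\'ar-Mori; flatness of each graded piece is then a consequence of base change applied to the vanishing of $R^1 F_{S\ast}\mathcal{O}_{X_S}(nD)$. Once this is in place, the identification $F_S^+\times_S\{0\}=X^+$ and the functoriality of the whole construction in $S$ follow formally.
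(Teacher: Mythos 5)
The paper does not actually prove this proposition: it is imported wholesale from Koll\'ar--Mori \cite[11.4, 11.10]{KM92}, so there is no internal argument to measure yours against. Taken on its own terms, your treatment of part 1 is broadly in line with the standard argument, except that the relative anticanonical $\Proj$ is not the right construction at this level of generality: part 1 assumes only that $f$ is proper with connected fibres and $R^1f_{\ast}\mathcal{O}_X=0$, with no relative ampleness of $-K$, so the anticanonical algebra need not recover $Y$. The correct object is your parenthetical alternative, the ringed space $(|Y|,\mathcal{F}_{\ast}\mathcal{O}_{\mathcal{X}})$, whose flatness over the base (checked by induction over Artinian thickenings) is exactly what the hypothesis $R^1f_{\ast}\mathcal{O}_X=0$ delivers; that should be the main construction, not a local aside.

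The genuine gap is in part 2, at the step where you assert that flatness and base change for the graded pieces follow from ``the vanishing of $R^1F_{S\ast}\mathcal{O}_{X_S}(nD)$, which is a Kawamata--Viehweg-type statement.'' You chose $D$ to be $F_S$-anti-ample, so $nD$ is relatively negative for $n>0$ and Kawamata--Viehweg vanishing says nothing about it; worse, the vanishing is simply false. Already for the Atiyah flop, with $C\simeq\PS^1$ of normal bundle $\mathcal{O}(-1)\oplus\mathcal{O}(-1)$ and $D\cdot C=-1$, the theorem on formal functions computes $R^1f_{\ast}\mathcal{O}_X(nD)$ from $H^1$ of $\mathcal{O}_{\PS^1}(-n)$ and its thickenings, and this is nonzero for $n\geq 2$. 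So the cohomology-and-base-change argument collapses exactly where it is invoked, and the companion claim that finite generation of $\bigoplus_n F_{S\ast}\mathcal{O}_{X_S}(nD)$ ``follows from the corresponding statement on the central fibre'' presupposes the very compatibility you are trying to prove. The repair, which is how Koll\'ar--Mori proceed, is to work on the flopped side: once $X_S^+$ exists, the graded pieces are $(F^+_S)_{\ast}\mathcal{O}_{X_S^+}(nD^+_S)$ with $D^+_S$ relatively ample, where relative vanishing genuinely applies and identifies the central fibre of $X_S^+$ with the flop of $X$; alternatively one uses the natural identification of $\Def(X)$ with $\Def(X^+)$ over $\Def(Y)$. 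An argument run entirely on the $D$-negative side cannot be made to work as stated.
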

\begin{thm}\cite[12.7.3-12.7.4]{KM92}
\label{thm:2}
Let $f\colon X \to Y$ be a small factorialisation of a terminal Gorenstein $3$-fold $Y$.
Then, $F\colon \Def(X)\to \Def(Y)$ is finite and $\im [\Def(X) \to \Def(Y)]$
is closed and independent of the choice of $f$.
\end{thm}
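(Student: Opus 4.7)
Since Theorem~\ref{thm:2} is attributed to [KM92, 12.7.3--12.7.4], my proof proposal sketches the approach taken there. The plan is to treat the three assertions --- finiteness of $F$, closedness of $\im F$, and independence of $\im F$ from the choice of $f$ --- separately, relying throughout on the fact that a small birational map preserves deformation theory up to discrete data, together with the simultaneous flopping result of Proposition~\ref{pro:1}(2).

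First, for finiteness of $F$, I would analyse the fibre over a deformation $\mathcal{Y}/S$ of $Y$. Such a fibre consists of deformations $\mathcal{X}/S$ of $X$ carrying a small map $\mathcal{X} \to \mathcal{Y}$ extending $f$. By Proposition~\ref{pro:1}(2) (applied to the $A$-trivial components that get contracted by $f$), once $f$ is prescribed the small modification $\mathcal{X} \to \mathcal{Y}$ is determined by the choice of Cartier divisor classes on $\mathcal{X}$ whose restrictions to $X$ furnish the small $\Q$-factorialisation data over $Y$. The set of small $\Q$-factorialisations of a terminal Gorenstein $3$-fold is finite --- they correspond to chambers in the movable cone of $X$, a polyhedral cone with finitely many faces, and this count is uniform in $S$. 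Hence $F$ has finite fibres, and being a morphism of analytic germs, it is therefore finite.

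Second, for closedness of $\im F$, one fixes a finite collection of Weil divisors $D_1, \ldots, D_r$ on $Y$ whose proper transforms on $X$ generate $\Pic X$ modulo $f^{\ast}\Pic Y$. A deformation $\mathcal{Y}/S$ lies in $\im F$ precisely when each $D_i$ lifts to a Weil divisor on $\mathcal{Y}$ and a small $\Q$-factorialisation of the resulting family exists. The locus in $\Def(Y)$ on which each $D_i$ lifts is cut out by the vanishing of obstruction classes in appropriate coherent cohomology groups, hence is closed by upper semicontinuity; once all the $D_i$ lift, the small $\Q$-factorialisation exists because of Proposition~\ref{pro:1}(2) and the finiteness just established.

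Third, for independence from the choice of $f$, I would use the fact that any two small factorialisations $X, X'$ of $Y$ are linked by a finite sequence of flops over $Y$. By Proposition~\ref{pro:1}(2), each such flop extends in family to any deformation of $X$ and commutes with base change, inducing a canonical bijection $\Def(X) \simeq \Def(X')$ compatible with the natural maps to $\Def(Y)$. Consequently $\im F$ is independent of $f$. The main obstacle, and the technical heart of the KM argument, is ensuring that the obstruction classes controlling liftability of the Weil divisors $D_i$ are genuinely compatible under flops: one must verify that the closed subset of $\Def(Y)$ described in the second step does not change when the generators $\{D_i\}$ are replaced by those coming from a flopped factorialisation. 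This is where the careful analysis of deformations of extremal neighbourhoods in [KM92, \S 11--12] is needed.
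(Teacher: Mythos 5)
The paper does not prove this statement at all: Theorem~\ref{thm:2} is quoted verbatim from \cite[12.7.3--12.7.4]{KM92} and used as a black box, so there is no in-paper argument to compare yours against; what you have written is a reconstruction of the Koll\'ar--Mori proof. As such a reconstruction it has the right skeleton --- the fibre of $F$ over the origin is controlled by the finitely many small $\Q$-factorialisations of $Y$ (finitely many chambers of the movable cone, all connected by flops), and independence of $f$ follows because flops extend in families and commute with base change (Proposition~\ref{pro:1}(2)). Two points deserve tightening. First, in the finiteness step, ``a morphism of analytic germs with finite fibres is finite'' is not a general fact; what you need is the local criterion that a holomorphic map germ is finite if and only if the base point is isolated in its fibre, which is exactly what the finiteness of the set of small factorialisations of the central fibre $Y$ gives you. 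You should say this explicitly rather than appeal to quasi-finiteness over all of $S$.

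Second, your closedness argument is both an unnecessary detour and the one genuinely shaky step. Once $F$ is finite it is proper, hence a closed map, and $\im F$ is closed with no further work; this is how the result is usually obtained. Your alternative --- that the locus where each Weil divisor $D_i$ lifts is ``cut out by the vanishing of obstruction classes in appropriate coherent cohomology groups, hence closed by upper semicontinuity'' --- does not go through as stated: for a Weil non-Cartier divisor on a singular $Y$ there is no exponential-sequence or cohomology-and-base-change argument that directly exhibits the liftability locus as a degeneracy locus of coherent sheaves, and making this precise (via relative Douady/Hilbert spaces, or via the Picard group of a resolution) is substantially harder than the finiteness argument you have already given. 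I would delete that paragraph and deduce closedness from finiteness.
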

\begin{rem}
\label{stratification}
By Proposition~\ref{pro:1}, there are maps
$\mathcal{G}$ and $\mathcal{H}$ that
restrict to $g$ and $h$ on the central fibre and that make the diagram 
\[
\xymatrix{
\mathcal{X}\ar[r]^{\mathcal{H}}\ar[d] &
\mathcal{Z}\ar[r]^{\mathcal{G}}\ar[d]& \mathcal{Y}\ar[d] \\
\Def(X)\ar[r]^{H} & \Def(Z)\ar[r]^{G} & \Def(Y)
}
\]
commutative. 
The Kuranishi space of $X$ thereby acquires a natural stratification by sublattices of $\Cl Y$;
by Theorem~\ref{thm:2}, there is an inclusion of
closed subspaces \[
\Def(X) \subset \Def(Z) \subset \Def(Y).
\] 
As the Picard rank is constant in any $1$-parameter deformation of
$Z$,  $\Def(Z) \subset \Def(Y)$ corresponds to the
locus of the Kuranishi space where the algebraic cycle representing $E$ is preserved.
Further, these inclusions are strict because a smoothing of $Y$ does
not sit under any $1$-parameter flat deformation of $Z$. 
\end{rem}
\subsection{Deformation of extremal rays}

For future reference, I state a mild generalisation of the results on
deformation of extremal rays in \cite{Mo82, KM92}.
\begin{thm}[Extension of extremal contractions] \mbox{}
\label{thm:5}
Let $Z \to Y$ be a small modification of a terminal Gorenstein Fano $3$-fold $Y$.
Consider a projective flat deformation $\mathcal{Z} \to S$ 
of $Z$, where $S$ is a smooth affine complex curve with closed point $\{0\}$ and generic point
$\eta$. 
Let $\varphi \colon Z \to Z_1$ be the contraction of an extremal ray $R
\subset Z$ and assume that if $\Exc \varphi$ is a
divisor, it is Cartier.
The contraction $\varphi$ extends to an $S$-morphism $f \colon
\mathcal{Z} \stackrel{\Phi} \to \mathcal{Z}_1$, where $\mathcal{Z}_1\to S$
is a projective $1$-parameter flat deformation of $Z_1$, and    %CHECK THAT IT IS ACTUALLY PROJECTIVE!!--yes ok, relative bpf-ness
\begin{enumerate}
 \item[1.] $\Phi_{\eta}$ is the contraction of an extremal ray,
 \item[2.] If $\varphi= \Phi_{0}$ contracts a subset
 of $\codim \geq 2$ (resp.~ a divisor, resp.~ is a fibre space of generic
 relative dimension $k$),  so does $\Phi_\eta$, 
\item[3.] If $\Exc \varphi$ is a Cartier divisor, in the notation of Lemma~\ref{lem:7},
 either $\Phi_{\eta}$ and $\varphi$ are of the same type, or $\Phi_{\eta}$
 and $\varphi$ are of types E$3$ and E$4$.
\end{enumerate}
\end{thm}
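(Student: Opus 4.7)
The plan is to treat separately the three possible types of $\varphi$ -- small contraction, divisorial contraction with Cartier exceptional divisor, and fibre-space contraction -- and then to handle part (3) by a deformation-invariance argument on the numerical and topological invariants of the exceptional locus.

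For a small contraction, $\Exc\varphi$ consists of a finite union of $A_Z$-trivial curves, and Proposition~\ref{pro:1}(2) applied to $\mathcal{Z}/S$ extends $\varphi$ directly to an $S$-morphism $\Phi\colon\mathcal{Z}\to\mathcal{Z}_1$; flatness of $\mathcal{Z}_1/S$ follows from cohomology and base change, using $R^1\Phi_\ast\mathcal{O}_\mathcal{Z}=0$ fibrewise (as in \cite[11.10]{KM92}). The generic fibre $\Phi_\eta$ contracts the flat limit of the $\varphi$-exceptional curves, whose class lies on an extremal face by upper semicontinuity of intersection numbers. For the fibre-space case, $Z_1$ is $\PS^1$, $\PS^2$, $\mathbb{F}_0$ or $\mathbb{F}_2$, and $\varphi$ is defined by a base-point-free linear subsystem of $|nA_Z|$ pulled back from $Z_1$; this linear system extends to $\mathcal{Z}/S$ by semicontinuity of $h^0$ and by the relative base-point-free theorem, producing $\Phi$ and the flat family $\mathcal{Z}_1\to S$.

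The divisorial case is the technical core. Since $E=\Exc\varphi$ is Cartier and $H^2(Z,\mathcal{O}_Z)=0$ (because $Z$ is a small modification of a Gorenstein Fano $3$-fold, so Kawamata--Viehweg vanishing applies), $E$ extends, after possibly shrinking $S$, to a relative Cartier divisor $\mathcal{E}\subset\mathcal{Z}$ with $\mathcal{E}|_0=E$. On $\mathcal{Z}/S$ the divisor $nA_{\mathcal{Z}/S}-\mathcal{E}$ is relatively nef and trivial precisely on the fibrewise extremal ray for $n\gg 0$; applying Kawamata's relative base-point-free theorem produces $\Phi\colon\mathcal{Z}\to\mathcal{Z}_1$, and $\mathcal{Z}_1\to S$ is flat by base change. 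The generic fibre $\Phi_\eta$ is the contraction of the extremal ray spanned by $[\mathcal{E}_\eta]$, and the codimension of $\Exc\Phi_\eta$ equals that of $\Exc\varphi$ by semicontinuity of fibre dimension of $\Phi$ and by the constancy of $\mathcal{E}^3$, $A_{\mathcal{Z}/S}^2\cdot\mathcal{E}$, $A_{\mathcal{Z}/S}\cdot\mathcal{E}^2$ in flat families.

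For part (3), the type of a divisorial contraction to a point is determined by the pair $(E,\mathcal{O}_E(E))$ together with the analytic type of the image point. In the family, $(\mathcal{E}_\eta,\mathcal{O}_{\mathcal{E}_\eta}(\mathcal{E}_\eta))$ is a flat deformation of $(E,\mathcal{O}_E(E))$, and the numerical invariants $\mathcal{E}^3$, $A^2_{\mathcal{Z}/S}\cdot\mathcal{E}$ are constant. Cases E$2$ and E$5$ share the pair $(\PS^2,\mathcal{O}(-1))$, but are distinguished by whether $A_{\mathcal{Z}_1/S}$ is Cartier at the image point -- an invariant of the family -- so neither can specialise to the other. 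The pair in E$2$/E$5$ is rigid as a polarised surface, so no jump occurs there. By contrast, irreducible quadric surfaces in $\PS^3$ form an irreducible family containing both smooth quadrics ($\PS^1\times\PS^1$, type E$3$) and singular quadric cones (type E$4$); the two types can therefore interchange in the deformation $\mathcal{Z}/S$, and this is the only admissible jump. The main obstacle I expect is this last step -- specifically, ensuring that the local Gorenstein index of $\mathcal{Z}_1$ at the image point is deformation-invariant so as to genuinely rule out the E$2\leftrightarrow$E$5$ jump, which requires a careful local analysis of the singularity along the relative contracted locus.
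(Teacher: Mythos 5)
Your proposal takes a genuinely different route from the paper, whose entire proof of Theorem~\ref{thm:5} is a reduction to Mori's deformation of extremal rays \cite[Theorem 3.47]{Mo82}: the point made there is that the Cartier hypothesis on $\Exc\varphi$ is exactly what allows Mori's argument (deformation of the extremal rational curve $\ell$ with $0<-K\cdot\ell\le 3$, and specialisation of curves to control the cone of the nearby fibres) to run on the non-factorial $Z$, with details deferred to \cite{Kal07a}. You instead rebuild the contraction in the family by extending divisors and invoking the relative base-point-free theorem. That route can be made to work, but the one step you assert without argument is precisely the mathematical content of the theorem: that your relative supporting divisor is \emph{relatively nef} with the correct null locus on nearby fibres. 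Nefness is not an open condition, and ``upper semicontinuity of intersection numbers'' does not give extremality of the contracted class on $\mathcal{Z}_\eta$; this is exactly where Mori's rational-curve argument is needed, or alternatively one first extends the morphism itself via \cite[11.4]{KM92} (Proposition~\ref{pro:1}(1), applicable since $R^1\varphi_*\mathcal{O}_Z=0$) and then counts Picard ranks, $\rho(\mathcal{Z}_\eta)=\rho(Z)$ and $\rho(\mathcal{Z}_{1,\eta})=\rho(Z)-1$ by Theorem~\ref{thm:7}, to see that $\Phi_\eta$ contracts a single $K$-negative ray. There is also a sign slip: the supporting divisor is $\varphi^{\ast}A_{Z_1}=A_Z+aE$ with $a>0$ the discrepancy, not $nA_{\mathcal{Z}/S}-\mathcal{E}$, which is strictly positive on $R$.

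Two smaller points. To extend $E$ as an \emph{effective} relative Cartier divisor you need more than $H^2(Z,\mathcal{O}_Z)=0$, which only extends the line bundle: you need $h^0(\mathcal{O}_{\mathcal{Z}_t}(\mathcal{E}_t))\neq 0$ for nearby $t$, which follows from $H^{i}(Z,\mathcal{O}_Z(E))=0$ for $i>0$ (Kawamata--Viehweg applied to $E-K_Z$, which is nef and big) together with constancy of $\chi$ in the flat family. Finally, the obstacle you flag at the end of part (3) is not one: E$2$ and E$5$ are already separated by the intersection numbers $E^3$ and $A_Z^2\cdot E$ of Lemmas~\ref{lem:10} and~\ref{lem:7}, which are constant in the flat family, so no local analysis of the Gorenstein index of $\mathcal{Z}_1$ along the contracted locus is required; only the E$3\leftrightarrow$E$4$ jump (a smooth quadric degenerating to a quadric cone within the irreducible family of quadrics in $\PS^3$) survives, as you say.
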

\begin{proof}
The assumption that $E$ is Cartier ensures that the proof of
\cite[Theorem 3.47]{Mo82} can be extended to this case.
See \cite{Kal07a} for a complete proof.
\end{proof}

\begin{thm}[The $2$-ray game]
\label{thm:2ray}
Let $Z \to Y$ be a small modification of a terminal Gorenstein Fano $3$-fold $Y$ with $\rho(Y)=1$ and
$\rho(Z/Y)=1$.
Assume that $\varphi \colon Z \to Z_1$ is a divisorial contraction and that $E=\Exc \varphi$ is Cartier.
There is  a diagram:
\begin{eqnarray}
\label{eq:41}
\xymatrix{ \quad & Z \ar[dl]_{\varphi} \ar[dr]^{g}
\ar@{<-->}[rr]^{\Phi} &\quad
& \widetilde{Z} \ar[dr]^{\alpha} \ar[dl]_{\tilde{g}} &\quad\\
Z_1  &\quad & Y & \quad & \widetilde{Z_1}}
\end{eqnarray}
where:
\begin{enumerate}
\item[1.] $Z$ and $\widetilde{Z}$ are small modifications of $Y$ with Picard rank
  $2$,
\item[3.] $\Phi$ is a composition of flops that is not an isomorphism,
\item[4.] $\alpha$ is a $K$-negative extremal contraction,
\item[5.] $Z_1$ (resp~$\widetilde{\mathcal{Z}}_1$) is one of:
  \begin{enumerate}
  \item[(i)] a terminal Gorenstein Fano $3$-fold with Picard rank $1$ if $\varphi$ (resp.~$\alpha$) is
birational,
  \item[(ii)] $\PS^2$ if $\varphi$ (resp.~$\alpha$) is a conic bundle,
  \item[(iii)] $\PS^1$ if $\varphi$ (resp.~$\alpha$) is a del Pezzo fibration.
  \end{enumerate}
\end{enumerate}
If $\alpha$ is birational, then $\Exc \alpha$ is Cartier.
\end{thm}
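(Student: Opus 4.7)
A $2$-ray game cannot be played directly on $Z$ because $Z$ is not factorial: the exceptional divisor of the second extremal ray need not be Cartier, so the geometric descriptions of Lemmas~\ref{lem:10} and~\ref{lem:7} may fail. The plan is to deform $Z$ to a nonsingular $3$-fold $\mathcal{Z}_\eta$, run a standard $2$-ray game there (where each step is controlled by Theorem~\ref{thm:3}), and then specialise each step back to the central fibre.

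\textbf{Smoothing and extending $\varphi$.} By Theorem~\ref{thm:7}, $Z$ admits a projective flat $1$-parameter deformation $\mathcal{Z}\to S$ whose generic fibre $\mathcal{Z}_\eta$ is a nonsingular small modification of a terminal Gorenstein Fano $3$-fold $\mathcal{Y}_\eta$ of Picard rank $1$ with the same anticanonical degree and index as $Y$; in particular $\rho(\mathcal{Z}_\eta)=2$, and Remark~\ref{rem:1} shows that $\mathcal{Z}_\eta$ is a weak-star Fano $3$-fold. Since $E=\Exc\varphi$ is Cartier, Theorem~\ref{thm:5} extends $\varphi$ to an $S$-morphism $\Phi_{\mathcal{Z}}\colon\mathcal{Z}\to\mathcal{Z}_1$, whose restriction $\varphi_\eta$ to the generic fibre is again a $K$-negative divisorial extremal contraction of the same numerical type with Cartier exceptional divisor.

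\textbf{Playing the game on the generic fibre.} The Mori cone $\NEb(\mathcal{Z}_\eta)$ has exactly two extremal rays; one is contracted by $\varphi_\eta$, and we follow the other. By Theorem~\ref{thm:3} each step is either a flop or a $K$-negative extremal contraction that preserves the weak-star Fano category, and the process terminates after finitely many flops in a variety $\widetilde{\mathcal{Z}}_\eta$ carrying a $K$-negative extremal contraction $\alpha_\eta\colon\widetilde{\mathcal{Z}}_\eta\to\widetilde{\mathcal{Z}}_{1,\eta}$. This $\alpha_\eta$ is either divisorial (with Cartier exceptional divisor since $\widetilde{\mathcal{Z}}_\eta$ is smooth), a conic bundle over $\PS^2$ (the bases $\F_0,\F_2$ in Theorem~\ref{thm:3} are ruled out by $\rho(\widetilde{\mathcal{Z}}_\eta)=2$), or a del Pezzo fibration over $\PS^1$. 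Moreover $\widetilde{\mathcal{Z}}_\eta\ne\mathcal{Z}_\eta$: the second extremal ray of $\mathcal{Z}_\eta$ is the flopping ray of the small contraction $\mathcal{Z}_\eta\to\mathcal{Y}_\eta$, so at least one genuine flop is performed before $\alpha_\eta$ is reached.

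\textbf{Specialisation and main obstacle.} Each flop on $\mathcal{Z}_\eta$ extends to a flop over $S$ that commutes with base change by Proposition~\ref{pro:1}(2), so the chain of flops specialises to a composition of flops $\Phi\colon Z\dashrightarrow\widetilde{Z}$ over $Y$ with $\widetilde{Z}$ a small modification of $Y$ of Picard rank $2$; by the previous paragraph $\Phi$ is not an isomorphism. Theorem~\ref{thm:5} then extends $\alpha_\eta$ to an $S$-morphism whose special fibre $\alpha\colon\widetilde{Z}\to\widetilde{Z}_1$ is a $K$-negative extremal contraction of a type compatible with $\alpha_\eta$, giving the three cases for $\widetilde{Z}_1$ asserted in the statement. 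The main obstacle is verifying that the specialisation genuinely preserves all the claimed structure, and in particular that $\Exc\alpha$ remains Cartier when $\alpha$ is birational: this follows because the relative exceptional divisor of the $S$-extension of $\alpha_\eta$ is Cartier on the smooth generic fibre and, being defined globally over $S$, restricts to a Cartier divisor on the special fibre $\widetilde{Z}$.
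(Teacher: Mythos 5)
Your overall strategy coincides with the paper's: smooth $Z$ via Theorem~\ref{thm:7}, extend $\varphi$ to the generic fibre via Theorem~\ref{thm:5}, run the factorial $2$-ray game of Takeuchi on the nonsingular generic fibre, and specialise the resulting link to the central fibre. The first three steps are carried out correctly, including the observation that $g_\eta$ contracts the second ($A$-trivial) extremal ray so that $\Phi$ begins with a genuine flop.

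There is, however, a genuine gap in the specialisation step, which is exactly the delicate point the theorem exists to address. You invoke Theorem~\ref{thm:5} to ``extend $\alpha_\eta$ to an $S$-morphism whose special fibre is $\alpha$,'' but Theorem~\ref{thm:5} goes in the opposite direction: its hypothesis is an extremal contraction \emph{on the central fibre} (with Cartier exceptional divisor), which it then extends over $S$. Here the contraction on the central fibre is precisely what you are trying to construct, so the citation is circular. Producing $\alpha$ on $\widetilde{Z}$ from $\alpha_\eta$ requires showing that the nef line bundle defining $\alpha_\eta$ extends to the family, restricts to a nef and semiample divisor on the non-factorial central fibre, and defines there an extremal contraction of the same type; this is the content of Lemma~\ref{lem:spe}, which the paper proves by appeal to \cite[Theorem 4.1]{dFH09}. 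The same issue infects your final claim that $\Exc\alpha$ is Cartier: a relative divisor that is Cartier on the generic fibre need not restrict to a Cartier divisor on a non-factorial special fibre unless one first knows it is Cartier on the total space $\widetilde{\mathcal{Z}}$, which again is part of what the specialisation lemma delivers rather than something that comes for free. The flop part of your specialisation, via Proposition~\ref{pro:1}(2), is fine; it is only the last contraction $\alpha$ and the Cartier statement that need the missing argument.
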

\begin{rem}
I want to stress that since $Z$ is not factorial, such a diagram does not automatically exist, and in particular,
$\Exc \alpha$ is not necessarily Cartier when it is a divisor. 
\end{rem}
\begin{proof}
By Theorem~\ref{thm:7}, there is a $1$-parameter smoothing $\mathcal{Z}\to \Delta$ of $Z$. For all
$t\in \Delta\smallsetminus \{0\}$, $\mathcal{Z}_t$ is a nonsingular small modification of a terminal Gorenstein
Fano $3$-fold with $\rho(\mathcal{Z}_t)=2$. Let $g_t \colon \mathcal{Z}_t \to \mathcal{Y}_t$ be the
anticanonical map. 
Note that Proposition~\ref{pro:1} ensures that $\mathcal{Y}_t$ is a $1$-parameter flat deformation of $Y$;
in particular $\mathcal{Y}_t$ is a terminal Gorenstein Fano $3$-fold with $\rho(\mathcal{Y}_t)=1$ and
$A_{\mathcal{Y}_t}^3=A_Y^3$, . 

Theorem~\ref{thm:5} shows that there is an extremal contraction $\varphi_t $ of $\mathcal{Z}_t$ that specialises
to $\varphi$ on the central fibre. As
$\mathcal{Z}_t$ is factorial, a $2$-ray game as in \cite{Tak89} yields a diagram:
\[
\xymatrix{ \quad & \mathcal{Z}_t \ar[dl]_{\varphi_t} \ar[dr]^{h_t}
\ar@{<-->}[rr]^{\Phi_t} &\quad
& \widetilde{Z}_t \ar[dr]^{\alpha_t} \ar[dl]_{\widetilde{h}_t} &\quad\\
\mathcal{Z}_{1,t}  &\quad & \mathcal{Y}_t & \quad &
\widetilde{\mathcal{Z}_{1,t}},}
\]
where 
\begin{enumerate}
\item[1.] $\mathcal{Z}_t$ and $\widetilde{\mathcal{Z}_t}$ are
  nonsingular small modifications of $\mathcal{Y}_t$ with Picard rank
  $2$,
\item[2.] $\Phi_t$ is a composition of flops that is not an isomorphism,
\item[3.] $\alpha_t$ is a $K$-negative extremal contraction,
\item[4.] $\mathcal{Z}_{1,t}$ (resp.~ $\widetilde{\mathcal{Z}_{1,t}}$) is one of:
  \begin{enumerate}
  \item[(i)] a terminal Gorenstein Fano $3$-fold with Picard rank $1$ if $\varphi_t$ (resp.~ $\alpha_t$) is
birational,
  \item[(ii)] $\PS^2$ if $\varphi_t$ (resp.~ $\alpha_t$) is a conic bundle,
  \item[(iii)] $\PS^1$ if $\varphi_t$ (resp.~ $\alpha_t$) is a del Pezzo fibration.
  \end{enumerate}
\end{enumerate}
The theorem then follows from Lemma~\ref{lem:spe}.
\end{proof}
\begin{lem}[Specialisation of a $2$-ray game]
\label{lem:spe}
 The elementary Sarkisov link on $\mathcal{Z}_t$, $t \neq 0$, induces
 an elementary Sarkisov link on the central fibre of $\mathcal{Z} \to \Delta$. 
\end{lem}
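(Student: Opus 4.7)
The plan is to assemble the central-fibre diagram piece by piece: extend the two outer contractions via Theorem~\ref{thm:5}, and propagate the middle composition of flops via Proposition~\ref{pro:1}(2), which guarantees that flops exist in families and commute with base change.

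More concretely, apply Theorem~\ref{thm:5} to the family $\mathcal{Z}\to\Delta$ and to $\varphi$: this produces an $S$-morphism $\mathcal{Z} \to \mathcal{Z}_1$ over $\Delta$ whose restriction to the generic fibre is $\varphi_t$, giving the left-hand side of the central-fibre link. To build the middle, decompose $\Phi_t$ into a sequence of elementary flops $\mathcal{Z}_t \dashrightarrow W_{1,t} \dashrightarrow \cdots \dashrightarrow \widetilde{\mathcal{Z}}_t$; each underlying flopping contraction is small with $A$-trivial exceptional curves, so by Proposition~\ref{pro:1}(2) both it and its flop extend to the family over a neighbourhood of $0\in\Delta$ and commute with base change. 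Composing these family flops gives a birational $S$-map $\mathcal{Z} \dashrightarrow \widetilde{\mathcal{Z}}$ whose central fibre is a composition of flops $Z \dashrightarrow \widetilde{Z}$. Since flops preserve the anticanonical model, $\widetilde{\mathcal{Z}}$ and $\mathcal{Z}$ share the anticanonical model $\mathcal{Y}$, so $\widetilde{Z}$ is a small modification of $Y$; and since Picard rank is constant in $1$-parameter flat deformations of terminal Gorenstein $3$-folds (Theorem~\ref{thm:7}), $\rho(\widetilde{Z}) = 2$. Finally, apply Theorem~\ref{thm:5} to $\alpha_t$: nonsingularity of $\widetilde{\mathcal{Z}}_t$ makes the Cartier hypothesis on $\Exc \alpha_t$ automatic, and one obtains an $S$-morphism $\widetilde{\mathcal{Z}} \to \widetilde{\mathcal{Z}_1}$ whose central fibre $\alpha$ is a $K$-negative extremal contraction of the same dimensional type as $\alpha_t$ by Theorem~\ref{thm:5}(2), with Cartier exceptional divisor when birational by Theorem~\ref{thm:5}(3).

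The main obstacle is verifying that the specialised middle map $\Phi \colon Z \dashrightarrow \widetilde{Z}$ is not an isomorphism, so that we really have an honest $2$-ray game on the central fibre rather than a degenerate picture in which $\varphi$ and $\alpha$ both live on $Z$. This follows from the base-change compatibility of each elementary flop in the family: each flopping curve on $\mathcal{Z}_t$ specialises to a genuine flopping curve on $Z$, so at least one step of $\Phi$ is a non-trivial flop on the central fibre. Equivalently, were $\Phi$ an isomorphism, then under the identification $\widetilde{Z} \simeq Z$ both $\varphi$ and $\alpha$ would be $K$-negative contractions of distinct extremal rays of the Picard rank two cone $\NEb(Z/Y)$, which is impossible since the only rays of $\NEb(Z/Y)$ are the $\varphi$-ray and the flopping ray. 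Once $\Phi$ is known to be non-trivial on the central fibre, the diagram obtained matches the one required by Theorem~\ref{thm:2ray}.
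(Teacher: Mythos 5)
The paper itself offers no proof of this lemma: it is dismissed as standard, with a pointer to \cite{Kal07a} and to the more general specialisation result \cite[Theorem 4.1]{dFH09}. Measured against the statements actually available in the paper, your argument has a genuine gap at its decisive step: you apply Theorem~\ref{thm:5} to $\alpha_t$, but Theorem~\ref{thm:5} runs in the opposite direction. Its hypothesis is a contraction on the \emph{central} fibre $Z=\mathcal{Z}_0$, and every one of its conclusions concerns the induced contraction $\Phi_\eta$ on the generic fibre; it is a deformation statement (special fibre to nearby fibres, after Mori and Koll\'ar--Mori), not a specialisation statement. What Lemma~\ref{lem:spe} requires is precisely the converse: that the extremal contraction $\alpha_t$ on the generic fibre induces one on the central fibre. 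That direction is genuinely harder --- an extremal ray on the generic fibre can specialise to a non-extremal class, and nefness of the supporting divisor can fail on the special fibre because $\NEb$ can only grow under specialisation --- and it is exactly the content of \cite[Theorem 4.1]{dFH09}. Your remark that ``nonsingularity of $\widetilde{\mathcal{Z}}_t$ makes the Cartier hypothesis automatic'' confirms the confusion: the Cartier hypothesis in Theorem~\ref{thm:5} is imposed on the exceptional divisor of the central-fibre contraction, which here is the very object you are trying to construct, and whose Cartierness is itself one of the claimed conclusions of Theorem~\ref{thm:2ray}.

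The middle of your argument is essentially sound but should be run from the central fibre outwards: apply Proposition~\ref{pro:1}(2) to the small $A$-trivial contraction $Z\to Y$, not to a decomposition of $\Phi_t$ whose elementary flopping contractions live only on the generic fibre; the clause ``commutes with any base change'' is then exactly what identifies the central fibre of the family flop with the flop $\widetilde{Z}$ of $Z\to Y$ and its generic fibre with $\widetilde{Z}_t$. To finish correctly one must produce $\alpha$ intrinsically on $\widetilde{Z}$ --- for instance via the cone and contraction theorems on the Picard-rank-$2$ terminal weak Fano $\widetilde{Z}$, noting that the second boundary ray of $\NEb(\widetilde{Z})$ must be $K$-negative because $-K_{\widetilde{Z}}$ is nef and big --- and only afterwards invoke Theorem~\ref{thm:5}, now in its legitimate direction, to identify its deformation with $\alpha_t$; alternatively one quotes \cite[Theorem 4.1]{dFH09} directly, as the paper does. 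Your closing argument that $\Phi$ is not an isomorphism is fine, and can be shortened: $Z\to Y$ is small and not an isomorphism, so its flop is not an isomorphism.
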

\begin{proof}
This lemma is standard, see \cite{Kal07a} for a proof; it follows from the more general
\cite[Theorem 4.1]{dFH09}.
\end{proof}

Let $Y$ be a terminal Gorenstein Fano $3$-fold with $\rho(Y)=1$. Assume that $X$, a small factorialisation of
$Y$, is weak-star Fano. Theorem~\ref{thm:3} shows that there is a sequence of contractions:
\begin{equation}
\label{eq:7}
\xymatrix{
X_0 \ar@{-->}[r]^-{\phi_0} \ar[d]& X_1 \ar@{-->}[r]^-{\phi_1}\ar[d] & \cdots
& X_{n-1}\ar@{-->}[r]^-{\phi_{n-1}} \ar[d]& X_n \ar[d] \\
Y_0 & Y_1 & \cdots & Y_{n-1} & Y_n
}
\end{equation}
I assume that at least one of the contractions $\varphi_i$ is divisorial. 
Then, for a suitable small factorialisation $X_0$, $\varphi_{0}= \varphi$ is divisorial.
Let $\widetilde{E}= \Exc \phi$ and $Z_0$ be a small modification of $Y$ such that $X_0\to Y_0$ factors through
$Z_0$, $\rho(Z_0/Y_0)=1$ and such that $E$, the image of $\widetilde{E}$ on $Z_0$, is Cartier. Then there is an
extremal contraction $\varphi \colon Z_0\to Z_1=Y_1$, such that the diagram
\begin{eqnarray}
\label{eq:1}
\xymatrix{\widetilde{E} \subset X \ar[r]^{\phi} \ar[d]_g & X_1\ar[d]^{g_1}\\
E \subset Z \ar[d]_h \ar[r]^{\varphi} & Z_1\\
\overline{E} \subset Y &
}
\end{eqnarray}
commutes. 
Theorem~\ref{thm:2ray} shows that $Z_0,Y_0$ and $Z_1$ fit in an elementary Sarkisov link as in \eqref{eq:41}.
To each such elementary link, one can associate systems of Diophantine equations  that reflect the numerical
constraints imposed by contractions of extremal rays on intersection of classes in cohomology. 
These constraints can be made explicit when $\Exc \varphi$ (and $\Exc \alpha$, if it is a divisor) is Cartier,
as is explained in Lemma~\ref{lem:10} and in Section~\ref{motivation}.

This procedure can be carried out at each divisorial step of the MMP on $X_0$.

\section{A geometric motivation of non-factoriality}
\label{motivation}
In this section, I write down explicitly systems of Diophantine equations associated to elementary Sarkisov
links as in \eqref{eq:41}. As Lemma~\ref{lem:10} shows, one can associate to each extremal contraction numerical
constraints. These systems of Diophantine equations reflect the relationships between the constraints associated
to the extremal contractions $\varphi$ and $\alpha$ on both sides of the link.

I then list all possible divisorial contractions that can occur when running the MMP on weak-star Fano $3$-folds
$X$ whose anticanonical model $Y$ have $\rho(Y)=1$. Not all links listed in this Section are geometrically realizable, I call them \emph{numerical links} in order to stress this fact. 

\subsection{Systems of Diophantine equations associated to elementary links}
\label{2raygame}
In this section, I use the notation set in \eqref{eq:41}. Let $\widetilde{E}$ be the proper transform of
$E=\Exc\varphi$ on $\widetilde{Z}$, $\widetilde{E}$ is a Cartier divisor. 
In what follows, I assume that $i(\widetilde{Z})= i(Z)=i(Y)=1$. This is a convenience, Remark~\ref{rem:hif}
explains how to recover the general case. 
By construction, $H$ and $\widetilde{E}$
are generators of $\Pic \widetilde{Z}$.
Let $g$ denote the genus of $Y,Z$ and $\widetilde{Z}$.

Since $\Phi$ is a sequence of flops, 
\begin{eqnarray}
  \label{eq:17}
  A_Z^2 {\cdot} E & = A_{\widetilde{Z}}^2 {\cdot} \widetilde{E} \nonumber\\
 A_Z {\cdot} E^{2} & = A_{\widetilde{Z}}{\cdot} \widetilde{E}^{2}\\
\widetilde{E}^{3} & =E^{3}-e.\nonumber
\end{eqnarray}
\begin{lem}\cite{T02}
 \label{lem:4}
The correction term $e$ in \eqref{eq:17} is a strictly positive integer.
\end{lem}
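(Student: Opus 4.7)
The plan is in two parts: first integrality, which is essentially automatic, then positivity, which requires analysing how $E^3$ changes under the flop $\Phi$.

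\emph{Integrality.} Both $Z$ and $\widetilde{Z}$ are Gorenstein $3$-folds, and $E$, $\widetilde{E}$ are Cartier by hypothesis. Hence $E^3$ and $\widetilde{E}^3$ are integers, and so is their difference $e$.

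\emph{Sign of $E \cdot C$ on flopping curves.} Since $\rho(Z) = 2$ and $\rho(Y) = 1$, the Mori cone $\overline{NE}(Z)$ has exactly two extremal rays: $R_\varphi$, contracted by the divisorial contraction $\varphi$, and $R_g$, the flopping ray contracted by the small morphism $g\colon Z\to Y$. Clearly $E\cdot R_\varphi < 0$ because $E=\Exc\varphi$. For the flopping ray, I use the discrepancy relation $\varphi^{\ast} A_{Z_1}=A_Z+E$, valid in the Cartier cases $E1$--$E4$ (and, after multiplication by $2$, in case $E5$). Since $A_Z\cdot R_g = 0$ (the ray is $K$-trivial) and $\varphi$ does not contract $R_g$, the class $\varphi_\ast R_g$ is a non-zero effective curve class on $Z_1$, so $\varphi^{\ast} A_{Z_1}\cdot R_g = A_{Z_1}\cdot \varphi_\ast R_g > 0$ by ampleness. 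Therefore $E\cdot C > 0$ for every flopping curve $C$ of $\Phi$, and in particular $E\cdot C$ is a positive integer.

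\emph{Change formula and conclusion.} For a single atomic $(-1,-1)$-flop along a smooth rational curve $C$ with common resolution $p\colon W\to Z$, $q\colon W\to\widetilde{Z}$, exceptional divisor $F\simeq\PS^1\times\PS^1$, one computes $q^{\ast}\widetilde{E} = p^{\ast}E+(E\cdot C)F$ by pairing with a ruling of $F$ and using $\widetilde{E}\cdot\widetilde{C}=-E\cdot C$. Expanding the cube via the standard intersections $F^3 = 2$, $p^{\ast}E\cdot F^2 = -(E\cdot C)$, $(p^{\ast}E)^2\cdot F = 0$ yields $\widetilde{E}^3 = E^3 - (E\cdot C)^3$, and the same calculation (with $p^{\ast}A = q^{\ast}A = \pi^{\ast}A_Y$) explains why the first two equalities of \eqref{eq:17} hold. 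This gives $e=(E\cdot C)^3 > 0$ in the simple-flop case.

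\emph{The general case via deformation.} The main obstacle is that $\Phi$ need not be a single elementary $(-1,-1)$-flop: the flopping locus over $Y$ may be reducible, and the analytic types of flopping curves on the singular $Z$ are more varied. To sidestep a full case-analysis I would invoke Theorem~\ref{thm:7} to take a $1$-parameter smoothing $\mathcal{Z}\to\Delta$ of $Z$; by Proposition~\ref{pro:1} (and Theorem~\ref{thm:5}) the flop $\Phi$ deforms to a flop $\Phi_t$ between smooth $3$-folds $\mathcal{Z}_t\dashrightarrow\widetilde{\mathcal{Z}_t}$, which decomposes into elementary $(-1,-1)$-flops along smooth rational curves $C_{i,t}$. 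The cone argument above applied to $\mathcal{Z}_t$ (with $E_t$ the corresponding Cartier deformation of $E$) shows $E_t \cdot C_{i,t} > 0$ at each intermediate step, so the simple-flop formula gives $e_t = \sum_i (E_{i,t}\cdot C_{i,t})^3 \geq 1$. Since triple self-intersections of Cartier divisors are locally constant in flat families, $e = e_t$, and one concludes $e \geq 1$. The delicate point is to verify that each intermediate $E_{i,t}\cdot C_{i,t}$ remains positive, which follows because each elementary flop in the smooth $2$-ray game is, by construction, directed by the same nef-wall crossing that governs the global $\Phi_t$.
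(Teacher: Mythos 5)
Your integrality argument and your proof that $E\cdot C>0$ on every flopping curve are both correct (and more detailed than the paper's one-line version; the only slip is that the discrepancy coefficient in $\varphi^{\ast}A_{Z_1}=A_Z+aE$ is $a=2$, not $1$, for an E$2$ contraction, which does not affect the sign argument). The atomic computation $\widetilde E^3=E^3-(E\cdot C)^3$ for a $(-1,-1)$-flop is also correct. The gap is in the reduction of the general case to this one. You assert that the deformed flop $\Phi_t$ between the smooth $3$-folds $\mathcal{Z}_t$ and $\widetilde{\mathcal{Z}_t}$ ``decomposes into elementary $(-1,-1)$-flops along smooth rational curves.'' This is not true of an arbitrary flop of smooth $3$-folds: a connected component of the flopping locus may be a single rational curve with normal bundle $\mathcal{O}\oplus\mathcal{O}(-2)$ or $\mathcal{O}(1)\oplus\mathcal{O}(-3)$, or a tree of such curves, and for these the change in $E^3$ is not $(E\cdot C)^3$ (for Reid's pagoda of width $w$ the correction term involves $w$). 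Theorem~\ref{thm:7} only provides a smoothing of the small modification; it does not say that the anticanonical model $\mathcal{Y}_t$ of the generic fibre has only ordinary double points, which is what your decomposition requires. As written, the ``general case'' step therefore does not go through: you would either have to establish that genericity statement or treat all analytic types of flopping curves.

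The paper sidesteps all of this with a single global argument requiring neither deformation nor any local classification. On a common resolution $p,q\colon W\to Z,\widetilde Z$ one writes $q^{\ast}\widetilde E=p^{\ast}E+(R'-R)$ with $R'-R$ exceptional; since $-q^{\ast}\widetilde E$ is $p$-nef, the Negativity Lemma shows $R'-R$ is effective and nonzero (because $\Phi$ is not an isomorphism), and the projection formula gives $e=(p^{\ast}E)^3-(q^{\ast}\widetilde E)^3=-E\cdot p_{\ast}\bigl((R'-R)^2\bigr)$, which is positive because $-p_{\ast}\bigl((R'-R)^2\bigr)$ is a nonzero effective $1$-cycle supported on the flopping locus, where you have already shown $E$ to be strictly positive. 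If you want to salvage your computation, replacing the deformation step by this common-resolution argument is the cleanest fix.
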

\begin{proof}
This is standard, I include the argument for clarity of exposition.
As $E$ is Cartier and $\varphi$-negative, for any effective curve $\gamma \in \Exc \Phi$, 
$E\cdot\gamma$ is strictly positive. 
Since $\widetilde{E}$ is also Cartier, $e$ is an integer.

Consider a common resolution of $Z$ and $\widetilde{Z}$:
\[
\xymatrix{
\quad & W\ar[dr]^{q}\ar[dl]_{p}& \quad \\
Z \ar@{<-->}[rr]^{\Phi} &\quad & \widetilde{Z},
}
\]
Since $Z$ and $\widetilde{Z}$ are terminal, the Negativity Lemma shows that
every $p$-exceptional divisor is also $q$-exceptional and that $p^{\ast}A_Z= q^{\ast}A_{\widetilde{Z}}$.

Then,
\[p_{\ast}^{-1}E= p^{\ast} E-R=
q^{\ast}(\widetilde{E})-R',\]
where $R$ and $R'$ are effective exceptional divisors for $p$ and $q$.
In particular:
\[
-p^{\ast}(E)=-q^{\ast}(\widetilde{E})+R'-R.
\]  
By the construction of the $E$-flop, $-q^{\ast}(\widetilde{E})$ is
$p$-nef. The Negativity
Lemma shows that $R'-R$ is strictly effective because $\Phi$ is not
an isomorphism, and its pushforward $p_{\ast}(R'-R)$ is effective.
Hence,
$-p_{\ast}(R'-R)^2$ is a non-zero
effective $1$-cycle contained in the indeterminacy locus of $\Phi$, and $e=-p_{\ast}(R'-R)\cdot E>0$. 
\end{proof}

I now write down numerical constraints associated to the extremal
contraction $\alpha$; this is similar to what is done in Lemma~\ref{lem:10} . These constraints and \eqref{eq:17} yield the systems of Diophantine equations that underlie the
$2$-ray game.

\subsubsection{$\alpha$ is divisorial}
Let $D$ be the exceptional divisor of $\alpha$ and $C$ its centre. Since $D$ is Cartier, there are integers $x, y$ such that:
\begin{equation}
  \label{eq:21}
D=x A_{\widetilde{Z}} - y \widetilde{E}.  
\end{equation}
If $\alpha$ is of type  E$1$,
\begin{equation}
  \label{eq:20}
A_{\widetilde{Z}}= \alpha^{\ast}(A_{\widetilde{Z}_1})-D.  
\end{equation}
Since $\widetilde{Z}_1$ is Gorenstein, $\alpha(\widetilde{E})$ is Cartier because it is $\Q$-Cartier;
\eqref{eq:21} and \eqref{eq:20} show that
$y$ divides $x+1$. Note that $y$ is the index of $\widetilde{Z}_1$ and define $k$ by $x+1=yk$.
By Lemma~ \ref{lem:10},
\[
\left \{
\begin{array}{c}
(A_{\widetilde{Z}}+D)^3=(A_{\widetilde{Z}}+D)^2 A_{\widetilde{Z}}=(A_{\widetilde{Z}_1})^3\\
(A_{\widetilde{Z}}+D)^2 D=0\\
(A_{\widetilde{Z}}+D)DA_{\widetilde{Z}}=A_{\widetilde{Z}_1}\cdot C=i(\widetilde{Z}_1) \deg(C)\\
A_{\widetilde{Z}}D^2=2p_a(C)-2
\end{array}
\right .
\]
These relations and \eqref{eq:17} yield the system of equations associated to the
configuration $(\varphi, \alpha)$:
\[
\left \{
\begin{array}{c}
y^2[A_Z^3k^2-2(A_{Z_1}\cdot
\Gamma+2-2p_a(\Gamma))k+2p_a(\Gamma)-2]\nonumber
\\=A_{\widetilde{Z_1}}^3  \nonumber \\  
A_Z^3k^2(yk-1)+(A_{Z_1}\cdot
\Gamma+2-2p_a(\Gamma))(2k-3k^2y)\nonumber \\+ (2p_a(\Gamma)-2)(3ky-1)+(A_{Z_1}\cdot
\Gamma-2+2p_a(\Gamma)+e)y=0  \nonumber\\
A_Z^3k(yk-1)-(A_{Z_1}\cdot
\Gamma+2-2p_a(\Gamma))(2yk-1)\nonumber \\+ (2p_a(\Gamma)-2)y= 
\deg(C)  \nonumber\\
A_Z^3(yk-1)^2-2(A_{Z_1}\cdot
\Gamma+2-2p_a(\Gamma))y(yk-1)\nonumber \\
+(2p_a(\Gamma)-2)y^2=2p_a(C)-2  \nonumber
\end{array}
\right .
\]

\begin{rem}
Assume that the degree of $Z$ is fixed.
Since $Z_1$ and $\widetilde{Z_1}$ are terminal Gorenstein Fano $3$-folds with Picard rank $1$, there are
finitely many possible values for ${A_{Z_1}}^3$ and ${A_{\widetilde{Z_1}}}^3$. Further, once  ${A_{Z_1}}^3$ and ${A_{\widetilde{Z_1}}}^3$ are fixed, Lemma~\ref{lem:8} shows that there are only finitely many
possibilities for $(p_aC, \deg C)$. As a result, there is a finite number of  Diophantine systems to consider to determine all numerical Sarkisov links $(\varphi, \alpha)$ with centre along $Z$.
\end{rem}

\subsubsection{$\alpha$ is a conic bundle}
Let $L$ be the pull back of
an ample generator of $\Pic\widetilde{ Z_1}$. Since $\rho(\widetilde{Z_1})=2$,
$\widetilde{Z_1}=\PS^2$ \cite[Lemma 3.6]{Kal07b} and $L=\alpha^{\ast}\mathcal{O}_{\PS^2}(1)$.

There are integers $x, y$ such that:
\begin{eqnarray}
  \label{eq:22}
L=x A_{\widetilde{Z}} - y \widetilde{E}.  
\end{eqnarray}
 \begin{cla}
The integers $x$ and $y$ are positive and coprime; $y$ is
equal to $1$ or $2$.  
\end{cla}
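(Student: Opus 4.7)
The plan is to intersect the relation $L=xA_{\widetilde Z}-y\widetilde E$ with the two generators of $\NEb(\widetilde Z)$ and then establish primitivity of $L$ in $\Pic\widetilde Z$ by descent. Since $\rho(\widetilde Z)=2$, the Mori cone has exactly two extremal rays: $R_\alpha$ spanned by a general fiber $f$ of $\alpha$, and the $A_{\widetilde Z}$-trivial flopping ray $R_{\mathrm{op}}$ contracted by the anticanonical map $\widetilde g\colon\widetilde Z\to Y$. The argument proceeds in four steps: $y>0$, $x>0$, $\gcd(x,y)=1$, and $y\mid 2$.

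For $y>0$ I use the ray $R_{\mathrm{op}}$. By construction of $Z$ as the symbolic blowup of $\overline E$ over $Y$, the divisor $E$ is relatively ample over $Y$, so $E\cdot c>0$ for every curve $c$ contracted by $h$. The standard sign reversal across the flop $\Phi$ then gives $\widetilde E\cdot R_{\mathrm{op}}<0$ on $\widetilde Z$. Intersecting $L=xA_{\widetilde Z}-y\widetilde E$ with $R_{\mathrm{op}}$ and using $A_{\widetilde Z}\cdot R_{\mathrm{op}}=0$ yields $L\cdot R_{\mathrm{op}}=-y\,(\widetilde E\cdot R_{\mathrm{op}})$. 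Since $L$ is nef and not proportional to $A_{\widetilde Z}$ we have $L\cdot R_{\mathrm{op}}>0$, and combined with $\widetilde E\cdot R_{\mathrm{op}}<0$ this forces $y>0$.

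For $x>0$ I intersect with a general fiber $f\in R_\alpha$. We have $A_{\widetilde Z}\cdot f=2$ (anticanonical degree of a conic) and $L\cdot f=0$ (pullback from the base). The key point is that $\widetilde E\cdot f>0$: otherwise $\widetilde E$ would be $\alpha$-vertical, hence numerically proportional to $L$, which combined with $y>0$ would yield an effective linear equivalence $L+y\widetilde E\sim 0$ between two nonzero effective divisors, impossible. Therefore $2x=y\,(\widetilde E\cdot f)>0$ and $x>0$. For coprimality, set $d=\gcd(x,y)$ and $M=(x/d)A_{\widetilde Z}-(y/d)\widetilde E\in\Pic\widetilde Z$; then $L=dM$ and $M\cdot f=0$. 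By the relative cone theorem applied to $\alpha$, $M\equiv\alpha^*M_0$ numerically for some $M_0\in\Pic\PS^2\otimes\Q$; since $\widetilde Z$ is simply connected, numerical and linear equivalence agree, so $M\sim\alpha^*M_0$ with $M_0\in\Pic\PS^2=\Z$. The equality $\alpha^*(dM_0)\sim\alpha^*\mathcal O_{\PS^2}(1)$ then forces $dM_0=\mathcal O_{\PS^2}(1)$, hence $d=1$. Finally, $2x=y\,(\widetilde E\cdot f)$ with $\gcd(x,y)=1$ forces $y\mid 2$, and $y>0$ gives $y\in\{1,2\}$.

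The main obstacle is the sign-reversal step underlying $y>0$: while the rule is classical for a single flop, $\Phi$ is in general a composition of flops, and controlling the sign of $\widetilde E\cdot R_{\mathrm{op}}$ across the whole chain requires iterating the negativity lemma exactly as in the proof of Lemma~\ref{lem:4}. A secondary delicate point is the descent in the coprimality step, where one must use connectedness of the fibers of $\alpha$ together with the torsion-freeness of $\Pic\widetilde Z$ to lift numerical equivalence on fibers to linear equivalence on the base.
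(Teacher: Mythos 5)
Your argument is correct and is essentially the paper's: establish positivity and coprimality of $(x,y)$, then use $x(A_{\widetilde Z}\cdot l)=y(\widetilde E\cdot l)$ for a curve $l$ contracted by $\alpha$ with $A_{\widetilde Z}\cdot l\le 2$ to force $y\in\{1,2\}$. The only real divergences are cosmetic — the paper gets $x\ge 0$ from $\widetilde E$ being a fixed divisor and $y>0$ from the observation that $y\le 0$ would give $\vert L\vert\supset\vert xA_{\widetilde Z}\vert$ and make $L$ big (contradicting $L^3=0$), where you instead intersect with the flopping ray — and your descent of $M=L/d$ along $\alpha$ supplies the detail behind the paper's bare assertion that $L$ is primitive in $\Pic\widetilde Z$.
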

This is similar to the argument in \cite{Tak89}. 
Since $E$ is fixed on $Z$,
$\widetilde{E}$ is fixed, and hence $x\geq 0$. 
If $y\leq 0$, $\vert L \vert \supset \vert x
A_{\widetilde{Z}}\vert$, and $L$ is big. This contradicts
$\alpha$ being of fibering type. The integers $x, y$ are
coprime because $A_{\widetilde{Z}}$ and $\widetilde{E}$ form a
$\Z$-basis of $\Pic  \widetilde{Z}$, $L$ is prime and $L$ is not an
integer multiple of either of them.
Denote by $l$ an effective nonsingular curve that is contracted by $\alpha$.
Then $A_{\widetilde{Z}}\cdot l \leq 2$, and since
$x(A_{\widetilde{Z}}\cdot l)=y\widetilde{E}\cdot
l$, the claim follows.

Let $\Delta\sim-\alpha_{\ast}(A_{\widetilde{Z}/\widetilde{Z}_1})^2$ be the discriminant curve of $\alpha$.
\[
\left \{ \begin{array}{c}
L^3=0\\
L^2 {\cdot} A_{\widetilde{Z}}=2 \\
L {\cdot} A_{\widetilde{Z}}^2 =12-\deg(\Delta) 
\end{array} \right.
\]
The system of equations associated to the configuration $(\varphi, \alpha)=(E1, CB)$ writes:
\[
\left \{
\begin{array}{c}
A_Z^3x^3
-3(A_{Z_1}{\cdot} \Gamma+2-2p_a(\Gamma))x^2y
\nonumber \\+3(2p_a(\Gamma)-2)xy^2 
+(A_{Z_1}{\cdot}
\Gamma-2+2p_a(\Gamma)+e)y^3=0\\
A_Z^3x^2-2(A_{Z_1}{\cdot} \Gamma+2-2p_a(\Gamma))xy\nonumber\\+(2p_a(\Gamma)-2)y^2=2\\
A_Z^3x-(A_{Z_1}{\cdot} \Gamma+2-2p_a(\Gamma))y= 12-\deg(\Delta)
\end{array}
\right .
\]

\subsubsection{$\alpha$ is a del Pezzo fibration}
Let $L$ be the pullback of an ample generator of $\Pic \widetilde{Z_1}$.
As $\widetilde{Z_1}=\PS^1$, $L=\alpha^{\ast}\mathcal{O}_{\PS^1}(1)$. Let $d$ be the degree of the generic
fibre. There are integers $x,y$ such that \eqref{eq:22} holds.
\begin{cla}
The integers $x$ and $y$ are positive and coprime; $y$ can only be
equal to $1,2$ or $3$.  
\end{cla}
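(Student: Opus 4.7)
The plan is to follow the pattern of the conic bundle claim and then exploit the restriction of $L=xA_{\widetilde{Z}}-y\widetilde{E}$ to a general fibre of $\alpha$. First, positivity of $x$ and $y$, and coprimality of $(x,y)$, can be established by repeating the earlier argument almost verbatim: since $E$ is a fixed prime divisor on $Z$, so is its proper transform $\widetilde{E}$ on $\widetilde{Z}$, forcing $x\geq 0$; if $y\leq 0$ then $|L|\supset |xA_{\widetilde{Z}}|$ would make $L$ big, contradicting the fact that $\alpha$ is of fibering type; and $\gcd(x,y)=1$ because $A_{\widetilde{Z}},\widetilde{E}$ form a $\Z$-basis of $\Pic \widetilde{Z}$ while $L$ is a primitive class in this lattice.

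For the bound $y\leq 3$, I would restrict to a very general fibre $F=\alpha^{-1}(t)$. Since $\widetilde{Z}$ is terminal Gorenstein of Picard rank $2$ and $\alpha$ is a del Pezzo fibration over $\PS^1$, a very general $F$ is a smooth del Pezzo surface of degree $d$ that avoids the finite locus of flopping curves on $\widetilde{Z}$, so $\widetilde{E}|_F$ is a genuine effective Cartier divisor on $F$, and it is non-trivial because $\widetilde{E}$ is not contained in any fibre of $\alpha$ (otherwise $\widetilde{E}$ and $L$ would be proportional in $\Pic\widetilde{Z}$, contradicting $\rho(\widetilde{Z})=2$). Restricting the identity $L=xA_{\widetilde{Z}}-y\widetilde{E}$ and using $L|_F=\mathcal{O}_F$ together with the adjunction identity $A_{\widetilde{Z}}|_F=-K_F$ (the normal bundle of $F$ is trivial), I obtain $y\,\widetilde{E}|_F=x\,(-K_F)$ in $\Pic F$. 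Since $\gcd(x,y)=1$, this forces the class of $-K_F$ to be divisible by $y$ in $\Pic F$, so $y$ is bounded above by the Fano index of $F$.

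The conclusion then follows from the well-known classification of smooth del Pezzo surfaces by Fano index: the index is $3$ when $F\simeq \PS^2$, $2$ when $F\simeq \PS^1\times \PS^1$, and $1$ in every other case, giving $y\in\{1,2,3\}$. The main technical point I expect to check carefully is that $\widetilde{E}$ is not a fibre component of $\alpha$, which is where the rank $2$ hypothesis on $\Pic \widetilde{Z}$ is genuinely used; the rest of the argument is a direct surface analogue of the curve-level reasoning already used in the conic bundle case, with the role of a fibre conic replaced by that of a smooth del Pezzo fibre.
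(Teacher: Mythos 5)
Your proposal is correct and follows essentially the same route as the paper, which simply proves this claim ``as in the conic bundle case'': positivity and coprimality come verbatim from the earlier argument, and the bound on $y$ comes from restricting $L=xA_{\widetilde{Z}}-y\widetilde{E}$ to the fibre structure and using $\gcd(x,y)=1$. The only cosmetic difference is that the paper intersects with a contracted curve $l$ of minimal anticanonical degree (giving $y\mid A_{\widetilde{Z}}\cdot l\leq 3$), whereas you phrase the same divisibility at the level of $\Pic F$ via the Fano index of the del Pezzo fibre; both reduce to the same classification ($\PS^2$, $\PS^1\times\PS^1$, index $1$ otherwise).
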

This is proved as in the conic bundle case.
\[
\left \{
\begin{array}{c}
L^2{\cdot} A_{\widetilde{Z}}=0\\
L^2 {\cdot} \widetilde{E}=0\\
L{\cdot}A_{\widetilde{Z}}^2=d
\end{array}
\right .
\]
The system of equations associated to $(\varphi, \alpha)$ writes:
\[
\left \{
\begin{array}{c}
A_Z^3x^2-2(A_{Z_1}{\cdot}
\Gamma+2-2p_a(\Gamma))xy+\nonumber\\(2p_a(\Gamma)-2)y^2=0 \nonumber
\\
(A_{Z_1}{\cdot} \Gamma+2-2p_a(\Gamma))x^2-2(2p_a(\Gamma)-2)xy \nonumber\\-(A_{Z_1}{\cdot}
\Gamma-2+2p_a(\Gamma)+e)y^2=0  \nonumber\\
A_Z^3x-(A_{Z_1}{\cdot} \Gamma+2-2p_a(\Gamma))y= d  \nonumber
\end{array}
\right .
\]
\begin{rem}
\label{rem:hif}
 \cite{Sh89} shows that when $i(Y)=4$, $Y$ is isomorphic to $\PS^3$.
If $i(Y)=2$,
both $\alpha$ and $\varphi$ are either E$2$ contractions, \'etale conic bundles or quadric bundles.
If $i(Y)=3$, then $\alpha$ and $\varphi$ are $\PS^2$-bundles over $\PS^1$.
The MMP on small modifications of higher index Fano $3$-folds is therefore very simple. 
If $\widetilde{H}$ is such that  $A_{\widetilde{Z}}= i(\widetilde{Z})H$, the systems written above hold
for any index after replacing $A_{\widetilde{Z}}$  by $\widetilde{H}$ in \eqref{eq:21} and \eqref{eq:22}.
\end{rem}
%\begin{table}[p]

\begin{table}
  \caption{Numerical Sarkisov Links, $g=3$}
  \label{table1}
  \begin{tabular}{rclclccc}
\hline
 &$(\varphi, \alpha)$ &$Z_1$& $ \varphi$ & $\widetilde{Z_1}$ & $\alpha$   & $e$ & $R$\\ 
\hline
$1$ & E$1$-E$1$ &$X_{22}$ &$(g,d)= (0,8)$ & $X_{22}$  &
$(h,e)=(0, 8)$  & $268$ & $+$\\
 $2$ & E$1$-E$1$& $X_{22}$ & $(g,d)= (1,9)$ & $V_5$ & 
$(h,e)= (1,9)$  & $171$ & $+$\\ 
 $3$ & E$1$-E$1$& $X_{22} $ &  $(g,d)=(2,10)$ & $X_{22}$ & 
$(h,e)= (2,10)$ & $80$& $+$\\ 
 $4$ & E$1$-CB & $X_{22}$ &   $(g,d)=(2,10)$ & $ \PS^{2}$ &
$\deg \Delta= 4$ &$92$& $+$\\
 $5$& E$1$-E$1$ & $X_{22}$ &  $(g,d)=(3,11)$ & $X_{12}$ & 
$(h,e)=(0,3) $ & $29$& $+$\\ 
 $6$ & E$1$-E$1$& $X_{18}$  & $(g,d)=(0,6)$ & $X_{18}$& 
$(h,e)= (0,6)$ & $144$ & $+$\\ 
 $7$ & E$1$-E$1$& $X_{18}$  & $(g,d)=(1,7)$ & $V_4$& 
$(h,e)=(1,7)$ & $77$ & $+$\\ 
 $8$& E$1$-E$1$ & $X_{18}$  &  $(g,d)=(2,8)$& $X_{18}$ & 
$(h,e)= (2,8)$ & $16$& $+$\\ 
 $\times 9$ & E$1$-CB& $X_{18}$  &  $(g,d)=(2,8)$ & $\PS^{2}$&
$\deg \Delta = 6$  & $26$ & $+$ \\ 
 $10$ & E$1$-E$1$& $X_{16}$ & $(g,d)=(0,5)$ & $Q $ & 
$(h,e)= (3,9)$ & $103$& $+$\\ 
 $11$ & E$1$-E$1$& $X_{16}$ &  $(g,d)=(1,6)$ & $X_{16}$ & 
$(h,e)=(1,6)$  & $42$& $+$\\ 
 $12$ & E$1$-dP& $X_{16}$  &  $(g,d)=(1,6)$ & $\PS^{1}$& $k=6$  & $48$& $+$\\ 
 $13$ & E$1$-E$1$& $X_{16}$  &  $(g,d)=(2,7)$ & $X_{2,2,2}$ & 
$(h,e)= (0,1)$ & $8$ & $+$\\
 $14$ & E$1$-E$1$& $X_{16}$ &  $(g,d)=(2,7)$  & $V_4$& 
$(h,e)= (5,9)$  & $4$& $+$\\ 
 $15$ & E$1$-E$1$& $X_{14}$ &  $(g,d)=(0,4)$ & $X_{14}$ & 
$(h,e)= (0,4)$  & $68$ & ?\\ 
 $16$ & E$1$-E$1$& $X_{14}$  &  $(g,d)=(1,5)$ & $Q$& 
$(h,e)=(9, 11)$ & $24$ & $+$\\ 
 $\bullet17$ & E$1$-E$1$& $X_{14}$  & $(g,d)=(1,5)$& $V_3$ & 
$(h,e)= (1,5)$  & $25$& ?\\ 
 $18$ & E$1$-E$1$& $X_{12}$ & $(g,d)=(0,3)$  & $X_{22}$& 
$(h,e)= (3,11)$  &$29$ & $+$\\
 $\bullet19$ & E$1$-E$1$& $X_{12}$  & $(g,d)=(0,3)$& $\PS^{3}$ & 
$(h,e)=(7, 9)$ & $45$ & $+$\\
 $20$ & E$1$-E$1$& $X_{12}$&  $(g,d)=(1,4)$  & $X_{12}$ & 
$(h,e)= (1,4)$   & $8$& $+$\\ 
 $21$ & E$1$-dP& $X_{12}$  &  $(g,d)=(1,4)$ & $\PS^1$&  $k=4$ & $12$ & $+$\\ 
 $\bullet22$& E$2$-E$2$&  $X_{12}$  &&  $X_{12}$  && $30$ & $+$\\
 $\times 23$& E$2$-E$1$& $X_{12}$&   & $X_{10}$ & 
$(h,e)= (0,2)$   & $29$& $+$\\ 
 $\times 24$& E$2$-E$1$& $X_{12}$&   & $V_{5}$ & 
$(h,e)= (7,12)$   & $24$& $+$\\ 
 $\bullet25$ & E$1$-E$1$& $X_{10}$ &  $(g,d)=(0,2)$ & $X_{10}$ & 
$(h,e)= (0,2)$ & $28$ & ?\\
 $\times 26$ & E$1$-E$1$& $X_{10}$  &  $(g,d)=(0,2)$ & $V_5$& 
$(h,e)= (7,12)$ & $23$& $+$\\ 
 $\times27$ & E$1$-E$2$& $X_{10}$  &  $(g,d)=(0,2)$ &  $X_{12}$&& $29$ & $+$ \\ 
 $\bullet28$ & E$1$-E$1$& $X_{10}$  &  $(g,d)=(1,3)$ &  $\PS^3$& 
$(h,e)= (15,11)$   & $2$ & $+$\\ 
 $\times 29$ & E$1$-E$1$& $X_{10}$ &  $(g,d)=(1,3)$&  $V_2$  & $(h,e)= (1,3)$ & $3$ & ?\\ 
 $\bullet30$ & E$1$-CB& $X_{2,2,2}$  & $(g,d)=(0,1)$ & $\PS^{2}$ & $\deg
\Delta = 7$ & $17$& \\ 
 $\times31$ & E$1$-E$1$& $X_{2,2,2}$ &   $(g,d)=(0,1)$ &$X_{16}$ & 
$(h,e)=(2,7)$  & $8$ & $+$\\ 
$32$ & E$1$-dP& $V_2$  &  $(g,d)=(1,3)$ &  $\PS^1$ & $k=6$  & $48$& $+$\\ 
$33$& E$1$-E$1$ & $V_2$ &  $(g,d)=(1,3)$ & $X_{16}$ &  $(h,e)=(1,6)$  & $42$& $+$\\
 $\bullet34$ & E$1$-E$1$& $V_3$ & $(g,d)=(3,6)$ & $\PS^{3}$ &  $(h,e)=(3,8)$ & $65$ & $+$
\\ 
 $\bullet35$ &E$3$-E$3$& $X_{2,3}$ & $(g,d)=(0,0)$ & $X_{2,3}$ &  $(h,e)=(0,0)$ & $12$ & $?$ \\
 $36$ &E$3$-E$1$& $X_{2,3}$ & $(g,d)=(0,0)$ & $V_3$ &  $(h,e)=(3,6)$ & $9$ & $?$ \\
$37$ &E$3$-E$1$& $X_{2,3}$ & $(g,d)=(0,0)$ & $Q$ &  $(h,e)=(12,12)$ & $8$ & $+$ \\
\hline
 \end{tabular}
 \end{table}
 
 \begin{nt} Most of the notation used in Tables~\ref{table1} and \ref{table2} is self explanatory.
 The column labelled $R$ gathers results from Section~\ref{rationality} on rationality. The symbol $\bullet$ indicates that the link is a known geometric constructions (e.g.~ Cases $17,19$ and $26$), see Section~\ref{examples} for examples and details. The symbol $\times$ indicates that the link is not geometrically realizable. Every numerical Sarkisov link that involves a contraction of type E$1$ with centre along a curve $\Gamma$ such that $(p_a(\Gamma), \deg \Gamma)=(0,0)$ also appears with that contraction replaced by a contraction of type E$3$ or E$4$. I do not repeat these solutions in the tables. 
 \end{nt}
 \begin{rem}
 \label{degree}
 Observe that the possible generators of $\Cl Y/\Pic Y$ have relatively low degree. When $Y$ is the midpoint of a Sarkisov link of type $(\varphi, \alpha)$, one can choose as a generator of $\Cl Y/\Pic Y$ either $\Exc \varphi$ or $\Exc \alpha$ when both $\varphi$ and $\alpha$ are divisorial. When $\varphi$ (resp.~ $\alpha$) is a strict fibration, consider the pullback of $\mathcal{O}_{Z_1}(1)$ (resp.~ $\mathcal{O}_{\widetilde{Z}_1}(1)$) instead of $\Exc \varphi$ (resp.~ $\Exc \alpha$). The anticanonical degree of the generator of $\Cl Y/\Pic Y$ is then given by Lemma~\ref{lem:10} or by the systems in Section~\ref{2raygame}. For $Y\subset \PS^4$ a quartic $3$-fold, the degree of a generator of $\Cl Y/ \Pic Y$ is at most $10$. 
 \end{rem}
 \begin{rem}{(Exclusion of Cases)}
It is known that if $\widetilde{X}\to \PS^2$ is a standard Conic Bundle whose
discriminant has degree at least $6$, then it is not rational. This shows, using Lemma~\ref{ratio}, that the (deformed numerical) Sarkisov
link $9$ in Table~\ref{table1} is not geometrically realizable. 
The other excluded cases correspond to deformed numerical Sarkisov links that are not geometrically realizable \cite{Tak89, IP99}.
\end{rem}

\subsection{Numerical Sarkisov links with centre along higher degree Fano $3$-folds}
\label{tables}
The numerical Sarkisov links with centres along terminal Gorenstein Fano $3$-folds of index $1$ and genus $g\geq 3$ are listed in Table~\ref{table2}.

\begin{center}
  
  \begin{longtable}[p]{ rclclccc}
 
   \label{table2}\\
  \caption{Numerical Sarkisov links, $g\geq4$}\\
  \hline
 & $(\varphi, \alpha)$ & $Z_1$ &$\varphi$ & $\widetilde{Z_1} $& $\alpha$ & $e$ & $R$
  \\[2mm] \hline
  \endfirsthead
  
  \caption[]{Continued}\\
 \hline
 & $(\varphi, \alpha)$ & $Z_1$ &$\varphi$ & $\widetilde{Z_1} $& $\alpha$ & $e$ & $R$
  \\[2mm]  \hline
\endhead
\multicolumn{8}{r}{{Continued on Next Page\ldots}} \\
\endfoot

 \\[2mm] \hline
\endlastfoot

  \multicolumn{8}{c}{$g=4$}\\ \hline
 % $g$ &&$(\varphi, \alpha)$&$Z_1$ &$\varphi$ & $\widetilde{Z_1} $ & $\alpha$ &$e$ &$R$\\ 
    $1$ & E$1$-E$1$&  $X_{22}$  & $(g,d)=(0,7)$ &  $X_{22}$ & $(h,e)=(0,7)$ & $89$ & $+$ \\ 
        $2$ & E$1$-E$1$& $X_{22}$ & $(g,d)=(1,6)$ & $Q$ & $(h,e)=(1,8)$ & $48$  & $+$\\ 
       $3$ & E$1$-E$1$& $X_{22}$ & $(g,d)=(2,9)$ & $X_{14}$ & $(h,e)=(0,3)$ & $12$ & $+$\\ 
       $4$ & E$1$-E$1$& $X_{18}$ & $(g,d)=(1,6)$ & $X_{18}$ & $(h,e)=(1,6)$  & $12$ & $+$ \\ 
       $5$ & E$1$-E$1$& $X_{18}$ & $(g,d)=(0,5)$ & $V_5$ & $(h,e)=(2,9)$  & $47$ & $+$ \\ 
      $6$ & E$1$-dP& $X_{18}$ & $(g,d)=(1,6)$ & $\PS^1$ & $k=6$ & $18$& $+$  \\ 
     $7$ & E$1$-E$1$& $X_{16}$ & $(g,d)=(0,4)$ & $X_{16}$ & $(h,e)=(0,4)$ & $32$ & $+$\\ 
     $8$ & E$1$-E$1$& $X_{16}$ & $(g,d)=(1,5)$ & $\PS^3$ &$(h,e)=(8,9)$ & $8$ &$+$\\  
     $9$ & E$1$-E$1$&  $X_{14}$ & $(g,d)=(0,3)$ & $X_{22}$ & $(h,e)=(2,9)$ & $12$ & $+$\\ 
       $10$ & E$1$-E$1$&  $X_{14}$ & $(g,d)=(1,4)$ & $X_{2,2,2}$ & $(h,e)=(0,0)$  & $4$  & ?\\ 
      $11$ & E$1$-CB&  $X_{14}$ & $(g,d)=(0,3)$ &  $\PS^2$ &$\deg \Delta= 5$ & $23$ &$+$ \\ 
    % $12$&E$1$-E$3$& $X_{14}$ & $(g,d)=(1,4)$ & $X_{2,2,2}$ & & $4$ & $?$ \\
     $12$&E$1$-E$3$& $X_{14}$ & $(g,d)=(1,4)$ & $V_1$ & & $4$ & $?$ \\
    $\bullet 13$& E$2$-E$1$& $X_{14}$&   & $V_{3}$ & $(h,e)= (0,4)$   & $16$& ?\\ 
  $\times14$& E$2$-E$1$& $X_{14}$&   & $Q$ & $(h,e)= (7,10)$   & $15$& $+$\\ 
       $\bullet15$ & E$1$-E$1$&  $X_{12}$ & $(g,d)=(0,2)$ &  $Q$ &$(h,e)=(7,10)$ & $14$ & $+$ \\  
       $\times16$ & E$1$-E$1$&  $X_{12}$ & $(g,d)=(0,2)$ &  $V_3$ &$(h,e)=(0,4)$ & $15$ & $+$ \\  
       $\bullet17$ & E$1$-E$1$& $X_{10}$ & $(g,d)=(0,1)$ & $X_{10}$ & $(h,e)=(0,1)$ & $11$& ? \\ 
        $\times18$& E$1$-E$1$ & $X_{10}$ & $(g,d)=(0,1)$ & $V_5$ & $(h,e)=(6,11)$ & $6$ & $+$\\   
        $19$ & E$3$-E$1$& $X_{2,2,2}/V_1$ & $(g,d)=(0,0)$ & $X_{14}$ & $(h,e)=(1,4)$ & $4$ & ?\\ 
       $20$ & E$3$-dP& $X_{2,2,2}/V_1$ & $(g,d)=(0,0)$ & $\PS^1$ & $k=4$ & $8$ & ? \\ 
        $21$ & E$1$-E$1$& $V_5$ & $(g,d)=(6, 11)$ & $V_5$ & $(h,e)=(0,8)$ & $45$ & $+$ \\ 
       $22$ & E$1$-E$1$& $V_4$ & $(g,d)=(4,8) $  & $\PS^1$ & $k=8$ & $32$ & $+$\\ 
        $23$ & E$1$-E$1$& $V_4$ & $(g,d)=(4,8)$ & $X_{22}$ & $(h,e)=(1,8)$ & $24$ & $+$ \\     
       $24$ & E$1$-E$1$& $V_3$ & $(g,d)=(2,5)$ & $V_4$ &   $(h,e)=(0,6)$ & $28$  & $+$\\ 
        $\times25$ & E$1$-E$1$& $V_2$ & $(g,d)=(0,2)$ & $X_{16}$ & $(h,e)=(0,4)$ & $32$ & $+$\\ \hline
  % $4$ &    $27$ & E$3$-E$1$& $V_1$ & $(g,d)=(0,0)$ & $X_{14}$ & $(h,e)=(1,4)$ & $4$  & ?\\ 
    %$4$ &   $28$ & E$3$-dP& $V_1$ & $(g,d)=(0,0) $  & $\PS^1$ & $k=4$ & $8$ & ?\\ 
    \multicolumn{8}{c}{$g=5$}\\
    \hline
$1$ & E$1$-E$1$& $X_{22}$  & $(g,d)=(0,6)$ &  $X_{22}$ & $(h,e)=(0,6)$ & $36$ & $+$ \\
  $\bullet2$ & E$1$-E$1$& $X_{22}$  & $(g,d)=(1,7)$ &  $\PS^3$ & $(h,e)=(1,7)$ & $14$ & $+$ \\
  $3$ & E$1$-E$1$& $X_{18}$  & $(g,d)=(1,5)$ &  $X_{12}$ & $(h,e)=(0,1)$ & $3$ & $+$ \\ 
   $4$ & E$1$-E$1$& $X_{18}$  & $(g,d)=(0,4)$ &  $Q$ & $(h,e)=(2,8)$ & $20$ & $+$ \\
  $5$ & E$1$-E$1$& $X_{16}$  & $(g,d)=(0,3)$ &  $V_5$ & $(h,e)=(3,9)$ & $12$ & $+$ \\
  $\times 6$& E$2$-E$1$& $X_{16}$&   & $X_{14}$ & $(h,e)= (0,2)$   & $11$& $+$ \\ 
  $\bullet7$& E$2$-E$2$ & $X_{16}$ &&$X_{16}$ && $12$ & $+$\\
   $\times 8$& E$2$-E$2$ & $X_{16}$ &&$V_{2}$ && $12$ & $+$\\ 
   $\bullet9$ & E$1$-E$1$& $X_{14}$  & $(g,d)=(0,2)$ &  $X_{14}$ & $(h,e)=(0,2)$ & $10$ & ? \\ 
 $\times 10$& E$1$-E$2$& $X_{14}$  &  $(g,d)=(0,2)$ &  $X_{16}$&& $11$ & $+$ \\ 
 $\times11$& E$1$-E$2$& $X_{14}$  &  $(g,d)=(0,2)$ &  $V_2$ && $11$ & $+$ \\ 
   $\times12$ & E$1$-E$1$& $X_{12}$  & $(g,d)=(0,1)$ &  $X_{18}$ & $(h,e)=(1,5)$ & $3$  & $+$\\

% \end{tabular}
 %\end{table}
%\begin{table}[h]
%\label{table3}
 % \begin{tabular}{@{} rcccccccc@{}}
 % $g$ &&$(\varphi, \alpha)$&$Z_1$ &$\varphi$ & $\widetilde{Z_1} $ & $\alpha$ &$e$ &$R$\\ 
  $\bullet13$ & E$1$-dP& $X_{12}$  & $(g,d)=(0,1)$ &  $\PS^1$ & $k=5$ & $8$  & $+$\\
 $14$ & E$3$-CB& $X_{10}$ & $(g,d)=(0,0)$ &  $\PS^2$ & $\deg \Delta= 6$ & $6$ & ?\\
  %$5$&& E$3$-CB& $X_{10}$ &  &  $\PS^2$ & $\deg \Delta= 6$ & $6$ & ?\\
   $15$ & E$1$-dP& $V_3$ & $(g,d)=(1,4)$ &  $\PS^1$ & $k=8$ & $24$  & $+$\\
 $16$ & E$1$-dP & $Q$ & $(g,d)=(30,23)$ & $\PS^1$  & $k=2$ & $4464$ & $+$\\
  \hline
  \multicolumn{8}{c}{$g=6$}\\
   \hline
$1$ & E$1$-E$1/2$& $X_{22}$  & $(g,d)=(1,6)$ &  $X_{16}$ & $(h,e)=(0,2)$ &  $2$ & $+$  \\
%$6$ &$2$ & E$1$-E$2$& $X_{22}$  & $(g,d)=(1,6)$ &  $X_{18}$ && $3$ & $+$ \\ 
 $2$ & E$1$-E$1$& $X_{22}$  & $(g,d)=(0,5)$ &  $V_5$ & $(h,e)=(0,7)$ &  $18$& $+$  \\
 $3$ & E$1$-E$1$& $X_{18}$  & $(g,d)=(0,3)$ &  $X_{18}$ & $(h,e)=(0,3)$ &  $9$  & $+$\\
 $\times4$& E$2$-E$1$& $X_{18}$&   & $X_{22}$ & $(h,e)= (1,6)$   & $3$& $+$\\ 
 $\bullet5$& E$2$-dP & $X_{18} $&& $\PS^1$ & $k=6$ & $9$ & $+$\\
 $\times6$ & E$1$-E$1$& $X_{16}$  & $(g,d)=(0,2)$ &  $X_{22}$ & $(h,e)=(1,6)$ &  $2$  & $+$\\
 $\bullet7$ & E$1$-dP& $X_{16}$  & $(g,d)=(0,2)$ &  $\PS^1$ & $k=6$ & $8$ & $+$  \\
 $\bullet8$ & E$1$-CB& $X_{14}$  & $(g,d)=(0,1)$ &  $\PS^2$ & $\deg \Delta=5$  & $6$ & $+$ \\
 $9$& E$3$-E$1$& $X_{12}$  & $(g,d)=(0,0)$ &  $\PS^3$ & $(h,e)=(6,8)$  & $5$& $+$  \\
 $10$& E$1$-CB & $V_4$  & $(g,d)=(2,6)$ &  $\PS^2$ &  $\deg \Delta=2$  & $14$ & $+$ \\
 $\times11$ & E$1$-dP& $V_2$  & $(g,d)=(0,1)$ &  $\PS^1$ & $k=6$ & $8$  & $+$\\
 $\times12$ & E$1$-E$1$& $V_2$  & $(g,d)=(0,1)$ & $X_{22}$ & $(h,e)=(1,6)$ & $2$ & $+$ \\ 
 $13$& E$1$-dP & $Q$ & $(g,d)=(36,33)$ & $\PS^1$  & $k=2,6,8$ & $1620$ & $+$\\
\hline
 \multicolumn{8}{c}{$g=7$}\\
   \hline
   $1$ & E$1$-E$1$& $X_{22}$  & $(g,d)=(0,4)$ &  $X_{22}$ & $(h,e)=(0,4)$ &  $8$ & $+$ \\
 $2$ & E$1$-CB& $X_{18}$  & $(g,d)=(0,2)$ &  $\PS^2$ &$\deg \Delta=4$  & $6$  & $+$\\
 $\bullet3$ & E$1$-E$1$& $X_{16}$  & $(g,d)=(0,1)$ &  $\PS^3$ & $(h,e)=(3,7)$  & $5$ & $+$ \\
 $4$ & E$3$-E$1$& $X_{14}$  & $(g,d)=(0,0)$ &  $Q$ & $(h,e)=(4,8)$ &  $4$  & $+$\\ 
  $5$& E$1$-dP & $Q$ & $(g,d)=(1,14)$ & $\PS^1$  & $k=6$ & $2016$ & $+$\\
 $6$ & E$1$-dP & $Q$ & $(g,d)=(37,38)$ & $\PS^1$  & $k=6$ & $462$ & $+$\\
 \hline
  \multicolumn{8}{c}{$g=8$}\\
   \hline
  $1$ & E$1$-CB& $X_{22}$  & $(g,d)=(0,3)$ & $\PS^2$ & $\deg \Delta=3$ &  $6$ & $+$ \\
 $\bullet2$& E$2$-E$1$& $X_{22}$&   & $\PS^3$ & $(h,e)= (0,6)$   & $6$& $+$\\ 
   $\bullet3$ & E$1$-E$1$& $X_{18}$  & $(g,d)=(0,1)$ &  $Q$ & $(h,e)=(2,7)$ & $4$ & $+$\\
  $4$& E$3$-E$1$ & $X_{16}/V_2$  & $(g,d)=(0,0)$ &  $V_4$ & $(h,e)=(0,4)$  & $4$ & $+$ \\
   $5$ & E$1$-dP& $V_3$  & $(g,d)=(0,2)$ &  $\PS^1$ & $k=8$ &  $8$  & $+$\\
 % $8$& $6$ & E$1$-E$1$& $V_2$  & $(g,d)=(0,0)$ &    $V_4$ & $(h,e)=(0,4)$ &  $4$ & $+$ \\ 
   $6$& E$1$-dP & $Q$ & $(g,d)=(26,30)$ & $\PS^1$  & $k=2$ & $360$ & $+$\\
  \hline
   \multicolumn{8}{c}{$g=9$}\\
   \hline
  $\bullet1$ & E$1$-E$1$& $X_{22}$  & $(g,d)=(0,2)$ &  $Q$ & $(h,e)=(0,6)$ &  $4$ & $+$ \\
  $2$ & E$3$-E$1$& $X_{18}$  & $(g,d)=(0,0)$ &  $V_{5}$ & $(h,e)=(1,6)$ & $3$ & $+$\\
\hline
   \multicolumn{8}{c}{$g=10$}\\
   \hline
     $\bullet1$& E$1$-E$1$ & $X_{22}$  & $(g,d)=(0,1)$ & $V_5$ & $(h,e)=(0,5)$ & $3$  & $+$\\ 
    
  \end{longtable}
\end{center}

\section{A classification of non-factorial terminal Gorenstein Fano $3$-folds}
\label{classification}

Let $Y_4^3 \subset \PS^4$ be a terminal non-factorial quartic $3$-fold. 
Well-known examples of non-factorial quartic
$3$-folds contain planes or quadrics. Yet, a very
general determinantal quartic hypersurface $Y'$ is not factorial and it contains neither a plane nor a quadric. However, $Y'$
does contain a degree $6$ \emph{Bordigo surface}, i.e.~ a surface whose ideal is generated by the $3\times 3$ minors of the matrix defining $Y'$. In the general case, I show that $Y$ contains
some surface of relatively low degree. In other words, the degree of
the surface lying on $Y$ that breaks factoriality cannot be
arbitrarily large.

\subsection{Quartic $3$-folds}
I now prove Theorem~\ref{thm:1}.
 \begin{proof} 
Let $Y$ be a non-factorial terminal Gorenstein Fano $3$-fold and $X\to Y$ a small factorialisation of $X$. I assume that $Y$ does not contain a plane: $X$ is weak-star Fano by Remark~\ref{rem:1}.
We may run a MMP on $X$ as in Theorem~\ref{thm:3}. 

If the MMP on $X$ involves at least one divisorial contraction, then up to a different choice of factorialisation $X\to Y$, we may assume that $X \to X_1$ is divisorial; let $E$ be its exceptional divisor.  The solutions of the systems of Diophantine equations in Section~\ref{2raygame} determine
all the possible contractions $X\to X_1$. To each configuration is associated a Weil non-Cartier divisor $F$ on $Y$.
By Section~\ref{motivation}, $E$ is a rational scroll over a curve $\Gamma$ as in Table~\ref{table1}.

I now assume that the MMP on $X$ involves no divisorial contraction.
 
If any small factorialisation $X \to Y$ is a Conic bundle over $\PS^2, \F_0$ or $\F_1$, we are in Case $5.$ of the Theorem. Hence, it suffices to prove that if $Y$ is the midpoint of a link between two del Pezzo fibrations, then $Y$ contains one of the surfaces listed in the Theorem.

Vologodsky shows that if $Y$ is the midpoint of a link between two nonsingular weak Fano $3$-folds that are extremal del Pezzo
fibrations of degrees $d,d'$, then $d=d'=2$ or $4$ \cite{Vo01}. \cite[Lemma 3.4]{Kal07b} shows that $d\neq2$ because $A_Y$ is very ample. 

\begin{cla}
 If $Y$ is the midpoint of a link between two weak-star Fano dP$4$ fibrations $X$ and $\widetilde{X}$, $Y$
 contains an anticanonically embedded del Pezzo surface of degree $4$,
 and the equation of $Y$ can be written:
\[
Y=\{a_2q +b_2q'=0 \}\subset \PS^4
\]
where $a_2, b_2, q$ and $q'$ are homogeneous forms of degree $2$ on $\PS^4$.  
\end{cla}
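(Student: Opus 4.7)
The plan is to produce the required degree-$4$ del Pezzo surface as the image of a general fibre of the fibration $\pi\colon X\to\PS^1$, and then to derive the factored form of the quartic from the classical fact that an anticanonically embedded dP$4$ in $\PS^4$ is a $(2,2)$-complete intersection.

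Let $F=\pi^{-1}(t)$ be a general fibre. By the adjunction formula $(K_X+F)|_F=K_F$ together with $F|_F=0$ (any two fibres are numerically equivalent), we obtain $A_X|_F=-K_F$, so that $F$ is a smooth del Pezzo surface of degree $4$ and $A_X^2\cdot F=4$. The restriction of the anticanonical morphism of $X$ to $F$ is thus given by a sub-linear system of $|-K_F|$, and its image has degree $4$ in $\PS^4$. Now $h\colon X\to Y\subset\PS^4$ is a small modification, and since $X$ is weak-star Fano its exceptional locus is a finite union of $A_X$-trivial irreducible rational curves $\bigcup_i\Gamma_i$. Each $\Gamma_i$ either lies in a single (special) fibre of $\pi$ or dominates $\PS^1$; in either case a sufficiently general $F$ is disjoint from $\bigcup_i\Gamma_i$. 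For such $F$, the map $h|_F\colon F\to S:=h(F)$ is an isomorphism onto its image, and so $S\subset Y\subset\PS^4$ is a smooth anticanonically embedded del Pezzo surface of degree $4$.

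Any such $S$ is a scheme-theoretic complete intersection $S=V(q,q')$ of two independent quadrics, with saturated homogeneous ideal equal to $(q,q')$; in particular the degree-$4$ component of the ideal is $qH^0(\PS^4,\mathcal{O}(2))+q'H^0(\PS^4,\mathcal{O}(2))$. Since $Y\supset S$, the defining quartic of $Y$ lies in $(q,q')_{4}$ and hence has the desired shape $a_2q+b_2q'$ for some quadratic forms $a_2,b_2$.

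The one genuinely delicate point is step~2, namely that for a general fibre $F$ the restriction $h|_F$ is actually an embedding of $F$ with image the full $(2,2)$-complete intersection $S$. One needs (i)~that $|A_X|$ restricts to the complete anticanonical system $|-K_F|$ on $F$ (a surjectivity of restriction $H^0(X,A_X)\twoheadrightarrow H^0(F,-K_F)$, obtained from the short exact sequence for $0\to A_X-F\to A_X\to A_X|_F\to 0$ together with vanishing of $H^1(X,A_X-F)$), and (ii)~that the finitely many $A_X$-trivial curves of $h$ can be avoided by a very general fibre. Given these, the argument reduces to the textbook description of anticanonical dP$4$'s in $\PS^4$ as complete intersections of two quadrics and to an elementary ideal-theoretic computation.
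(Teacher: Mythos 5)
Your overall strategy is the same as the paper's (a general fibre $F$ of $\pi$ is a smooth dP$4$, one shows it is \emph{anticanonically} embedded by $h$ into $\PS^4$, hence a $(2,2)$ complete intersection, and then the quartic lies in the degree-$4$ part of $(q,q')$), but the one step you yourself flag as ``genuinely delicate'' is precisely where all the content of the claim lives, and you do not supply it. Surjectivity of $H^0(X,A_X)\to H^0(F,-K_F)$ is, by the count $h^0(A_X)=h^0(-K_F)=5$, equivalent to $H^0(X,A_X-F)=0$, and the vanishing $H^1(X,A_X-F)=0$ that you invoke is \emph{not} a routine application of Kawamata--Viehweg: writing $A_X-F=K_X+(2A_X-F)$, the divisor $2A_X-F$ is negative on any $A_X$-trivial (flopping) curve that is a multisection of $\pi$, so it need not be nef. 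The paper closes this gap geometrically: if $H^0(X,A_X-F)\neq 0$ then the image of every general fibre lies in a hyperplane, so the fibration is induced by a pencil of hyperplanes on $Y$; since $X\to Y$ is small it cannot resolve a base curve of such a pencil, so the base plane $\Pi=\Bs\mathcal{H}$ must lie on $Y$, contradicting the weak-star hypothesis. (Alternatively one can argue that a nonzero effective $D\sim A_X-F$ would satisfy $A_X^2\cdot D=A_X^3-A_X^2\cdot F=4-4=0$, impossible for an effective divisor on a weak-star Fano with $X\to Y$ small.) Either way, the hypothesis that $X$ is weak-star must enter, and your proposal never uses it.

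A secondary inaccuracy: your claim that a sufficiently general fibre $F$ is disjoint from the $A_X$-trivial curves $\Gamma_i$ fails whenever some $\Gamma_i$ dominates $\PS^1$, since such a curve meets \emph{every} fibre ($F\cdot\Gamma_i=\deg(\Gamma_i\to\PS^1)>0$). This does not matter for the argument --- once one knows $\vert A_X\vert$ restricts to the complete system $\vert{-}K_F\vert$, the map $h\vert_F$ is an embedding regardless of whether $F$ meets the flopping locus --- but it means ``avoid the exceptional curves'' cannot serve as a substitute for proving completeness of the restricted linear system. The final ideal-theoretic step (the degree-$4$ graded piece of the saturated ideal of a $(2,2)$ complete intersection is $qH^0(\mathcal{O}(2))+q'H^0(\mathcal{O}(2))$) is fine and matches the paper.
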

Let $F$ be a general fibre of $X \to \PS^1$; $F$ is a
nonsingular del Pezzo surface of degree $4$ and $A_F= {A_X}_{\vert
  F}$. 
  Since $\vert A_X \vert_{\vert F}\subset \vert A_F \vert$, the restriction of the anticanonical map of $Y$ to $F$ factors as  $g_{\vert F} = \nu \circ
\Phi_{\vert A_F \vert}$, where $\nu$ is the projection from a
(possibly empty) linear subspace 
\[
\xymatrix{\PS(H^0(F, A_F))\simeq \PS^4\ar@{-->}[r] & \PS(H^0(F,
  \vert A_{X} \vert_{\vert F}))}.
\]
If $\nu_{\vert F}$ is not the identity, as $h^0(A_F)=h^0(A_X)=5$, the
map $i$ in 
\begin{align*}
0 \to H^{0}(X, A_X-F)\to H^{0}(X, A_X) \stackrel{i} \to H^{0}(F,
A_F)\to \\ \to H^{1}(X, A_X-F)\to 0
\end{align*}
is not surjective, and $H^0(X, A_X-F)\neq (0)$: there is a hyperplane section of $Y$ that contains
$\Phi_{\vert A_X \vert}(F)$. 
As this holds for the general fibre $F$, the fibration $X \to \PS^1$
is induced by a pencil of hyperplanes on $Y$.
Without loss of generality, we may assume that
$\xymatrix{Y \ar@{-->}[r]& \PS^1}$ is determined by the pencil of
hyperplanes $\mathcal{H}_{(\lambda{:} \mu)}= \{\lambda x_0+\mu
x_1=0\}$ for $(\lambda{:} \mu)\in \PS^1$. 
The map $X \to Y$ is a resolution of the base locus of $\mathcal{H}$ on $Y$ and therefore $\Pi= \{x_0{=}x_1{=}0\}= \Bs\mathcal{H}$ lies on $Y$: this contradicts $X$ being weak-star Fano.
 
As $H^0(X, A_X-F)=(0)$, $\nu$ is the identity and $Y$ contains an anticanonically embedded
nonsingular del Pezzo surface $S$ of degree $4$, i.e.~ the intersection
of two quadric hypersurfaces in $\PS^4$. 
Since $S =\{
q{=}q'{=}0\} \subset \PS^4$ lies on $Y$, where $q$ and $q'$ are
homogeneous quadric forms, the equation of $Y$ writes:
\begin{equation}\label{eq:dp4}
Y=\{a_2q +b_2q'=0 \}\subset \PS^4
\end{equation}
with $a_2$ and $b_2$ homogeneous forms of degree $2$. 

Geometrically, the two structures of del Pezzo fibrations on small factorialisations of $Y$ arise as
the maps induced by the
pencils of quadrics (eg $\mathcal{L}=\{ q,q'\}$ and $\mathcal{M}=\{
a_2,b_2\}$) after blowing up their base locus on $Y$, which are
anticanonically embedded del Pezzo surfaces of degree $4$.

Conversely, if the equation of $Y$ is of the form \eqref{eq:dp4} and if $\rk \Cl Y=2$, let $X$ (resp.~ $X'$) be the blow up of $X$ along $S$ (resp.~ along $S'=
\{a_2{=}b_2{=}0\}$), there is a diagram
\[
\xymatrix{
\quad & X \ar[dl] \ar[dr] \ar@{-->}[rr] & \quad & X'\ar[dl] \ar[dr] \\
\PS^1 & \quad & Y & \quad & \PS^1  
}
\]
 The $3$-fold $X$ (resp.~ $X'$) lies on $Q \times \PS^1$ (resp.~ $Q'
 \times \PS^1$) for $Q \subset
 \PS^4$ (resp.~ $Q'$) a quadric that is the proper transform of $\{a_2{=}0 \}$ under
 the blow up of $\PS^4$ along $S$ (resp.~ $S'$). 
The $3$-fold $X$
(resp.~ $X'$) is the
section of a linear system $\vert 2M +2F \vert$ on $ Q \times \PS^1$
(resp.~ $Q'\times\PS^1$), where $M=p_1^{\ast}\mathcal{O}_Q(1)$
(resp.~ $M=p_1^{\ast}(\mathcal{O}_{Q'}(1))$) and $F=p_2^{\ast}\mathcal{O}_{\PS^1}(1)$. 
The map $\xymatrix{X \ar@{-->}[r] & X'}$ is a flop in the curves lying above the points
$\{q{=}q'{=}a_2{=}b_2{=}0\}$.
\end{proof}
\begin{rem}
\label{bound}
The bound on the rank of the divisor class group of quartic $3$-folds given in \cite{Kal07b} is too high: if $Y_4\subset \PS^4$ does not contain a plane, $\rk \Cl Y\leq 6$. Fujita classifies all polarised del Pezzo $3$-folds $(V, L)$ with Cohen-Macaulay Gorenstein singularities \cite{F90}. It is possible that the application of his results would yield an even finer bound. 
\end{rem}
\subsection{Non-factorial terminal Gorestein Fano $3$-folds with $g\geq 4$}
By the same methods as above, one obtains the following theorem for non-factorial terminal Gorenstein Fano $3$-folds of index $1$ and higher genus. 
\begin{thm}
Let $Y=Y_{2g-2}\subset \PS^{g+1}$ be a terminal Gorenstein Fano $3$-fold with $\rho(Y)=1$ and $g(Y)=g$. Then one of the following holds:
\begin{enumerate} 
\item[1.] $Y$ is factorial.
\item[2.]$Y$ contains a plane $\PS^2$ and $g\leq 8$.
\item[3.]  $Y$ is the midpoint of a link between two weak-star Fano del Pezzo fibrations of degree $g+1$ and $g \leq 8$, $g\neq 6$. 
\item[4.] $Y$ has a structure of Conic Bundle over $\PS^2$, $\F_0$ or $\F_2$.
\item[5.] $Y$ contains a rational scroll $E \to C$  over a curve $C$ whose
  genus and degree appear in the appropriate section of Table~\ref{table2} (see page \pageref{table2}).
\end{enumerate}  
\end{thm}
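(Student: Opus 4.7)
The plan is to follow the pattern of the proof of Theorem~\ref{thm:1} \emph{verbatim} with the quartic replaced by a general terminal Gorenstein Fano $3$-fold of Picard rank $1$ and genus $g$, and to add a separate argument handling the degree constraint in the del Pezzo fibration case. First, assume that $Y$ does not contain a plane; by Remark~\ref{rem:1} a small factorialisation $h\colon X\to Y$ is then weak-star Fano, so by Theorem~\ref{thm:3} one may run a MMP on $X$ that stays in the weak-star Fano category and terminates in either a factorial terminal Gorenstein Fano with $\rho=1$, a conic bundle over $\PS^2,\F_0$ or $\F_2$, or a del Pezzo fibration over $\PS^1$.

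If at least one step of the MMP is a divisorial contraction, choose, as in the proof of Theorem~\ref{thm:1}, a factorialisation so that the first step $X\to X_1$ is divisorial with exceptional divisor $E$. Applying Theorem~\ref{thm:2ray} and Lemma~\ref{lem:10} to a suitable intermediate $Z$ with $\rho(Z/Y)=1$, and plugging the invariants of $Z_1$ and $\widetilde{Z}_1$ into the Diophantine systems of Section~\ref{2raygame}, one enumerates all numerical links with centre along $Y_{2g-2}$; these are precisely the rows of the relevant block of Table~\ref{table2}, and the image of $E$ on $Y$ is a rational scroll over a curve with the tabulated $(p_a,\deg)$, giving case $5$. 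If no step of the MMP is divisorial, then $X_n$ is a strict Mori fibre space and either it is a conic bundle (case $4$) or it is a del Pezzo fibration over $\PS^1$, which is the main case to be analysed.

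Suppose $Y$ is the midpoint of a link between two weak-star Fano del Pezzo fibrations $X\to\PS^1$ and $\widetilde X\to\PS^1$ of degrees $d$ and $d'$. The key input is Vologodsky's constraint \cite{Vo01} together with Lemma~\ref{lem:4}, which in the style of \cite[Lemma~3.4]{Kal07b} restricts the admissible $(d,d')$; combined with the fact that $A_Y$ is very ample on $Y_{2g-2}$, this forces $d=d'$ and $d\le 9$, so in particular $g\le 8$. To identify $d$ with $g+1$, repeat the pencil argument from the proof of Theorem~\ref{thm:1}: if $F$ denotes a general fibre, then $h^0(F,A_F)=d+1$ and $H^0(X,A_X-F)=0$ (otherwise $Y$ would contain the base locus of a pencil of hyperplanes, contradicting weak-star Fano), so the short exact sequence obtained by restriction to $F$ gives $h^0(X,A_X)=d+1$; since $h^0(X,A_X)=h^0(Y,A_Y)=g+2$, we conclude $d=g+1$. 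Finally, $g=6$ (i.e.\ $d=7$) is excluded because a smooth del Pezzo surface of degree $7$ has Picard rank $3$, incompatible with relative Picard rank $1$ for a Mori-type weak-star fibration whose generic fibre is smooth by Theorem~\ref{thm:7}.

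The hardest part will be pinning down the del Pezzo degree case precisely, in particular the exclusion $g\ne 6$ and the matching $d=g+1$: one has to rule out, via the pencil-of-hyperplanes argument, the possibility that the fibration is induced by a non-complete linear subsystem of $|A_X|$, and one has to check that no weak-star Fano model with $d=7$ survives the numerical constraints of Section~\ref{2raygame}; the remaining cases $d\in\{1,\dots,6,8,9\}$ correspond exactly to $g\in\{0,1,2,3,4,5,7,8\}$, of which only $g\ge 3$ is relevant here. Once this is settled, the generation of $\Cl Y/\Pic Y$ by the topological traces of the $K$-negative extremal contractions, already established in the course of Theorem~\ref{thm:3} and Theorem~\ref{thm:2ray}, finishes the proof.
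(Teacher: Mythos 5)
Your overall strategy coincides with the paper's: the proof given there is literally ``entirely similar to what is done in the previous subsection'', i.e.\ the quartic argument of Theorem~\ref{thm:1} is rerun with the genus-$g$ Diophantine systems and the corresponding block of Table~\ref{table2}, and the entire content of item $3$ --- equality of the two del Pezzo degrees, the value $g+1$, and the exclusions $g\leq 8$, $g\neq 6$ --- is simply deferred to \cite{Vo01}. Your treatment of cases $1$, $2$, $4$ and $5$ therefore matches the paper and is fine.

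The gap is in your self-contained argument for $d=g+1$. From $H^0(X,A_X-F)=0$ the long exact sequence only gives \emph{injectivity} of the restriction $H^0(X,A_X)\to H^0(F,A_F)$, hence $g+2\leq d+1$, i.e.\ $d\geq g+1$; the equality $h^0(X,A_X)=d+1$ that you assert would require surjectivity, i.e.\ $H^1(X,A_X-F)=0$, which you neither prove nor can take for granted. In the quartic case this issue is invisible because $h^0(A_X)=h^0(A_F)=5$ from the outset, so injectivity already forces the restriction to be an isomorphism; for $g\geq 4$ one must separately rule out $d>g+1$ (equivalently, that $F\to\PS^{g+1}$ is a proper projection of the anticanonical embedding of $F$), and this is precisely the part the paper outsources to Vologodsky. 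Two smaller points. First, the parenthetical ``otherwise $Y$ would contain the base locus of a pencil of hyperplanes'' rests, in the quartic proof, on the identification $\overline{F_t}=H_t\cap Y$, which uses $\deg\overline{F_t}=d=4=\deg(H_t\cap Y)$; for $g\geq 4$ one has $\deg(H_t\cap Y)=2g-2$, which differs from $d$ in general, so $H_t\cap Y$ may be reducible with $\overline{F_t}$ a proper component and the base locus $\PS^{g-1}$ need not lie on $Y$ --- the contradiction must be rederived. Second, the exclusion $g\neq 6$ is not explained by the geometric Picard rank of a degree $7$ del Pezzo surface being $3$ (degree $5$ and $6$ del Pezzo surfaces also have geometric Picard rank $>1$ yet admit forms of Picard rank $1$); the correct reason is that a degree $7$ del Pezzo surface carries a distinguished Galois-invariant $(-1)$-curve, so no form of it over the function field of $\PS^1$ can have Picard rank $1$.
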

\begin{proof}
This is entirely similar to what is done in the previous subsection. See \cite{Vo01} for $3$.
\end{proof}

\section{Rationality}
\label{rationality}

Classically, it was known that del Pezzo surfaces are rational over any algebraically closed field. 
Understanding whether Fano varieties are rational or not was one of the early problems of higher dimensional birational geometry. Intuitively, Fano varieties can be thought of as being close to $\PS^n$: they are covered by rational curves and, in some sense, these curves should govern their birational geometry. However, the rationality question proved very difficult and  it was not until the early seventies that it was settled for nonsingular Fano hypersurfaces in $\PS^4$ \cite{IM, CG72}.
\cite{IM} developed the Noether-Fano method and proved that any smooth quartic hypersurface is \emph{birationally rigid}-- i.e.~ that every rational map from a smooth quartic hypersurface to a Mori fibre space is a birational automorphism-- and in particular, that quartic hypersurfaces are very far from being rational. This approach was further developed and applied to a number of cases; it yielded surprising rigidity results-- see \cite{S82, Co95, Co00,CPR, Me04, IP99} or the survey \cite{Puk07}. The Noether-Fano method works in principle in any dimension and for singular varieties, but the technical difficulties are considerable. This section presents some results related to the rationality question for terminal Gorenstein Fano $3$-folds. 

\subsection{Rationality, Rational connectivity and ruledness for mildly singular $3$-folds}

\cite{P04} shows that most canonical Gorenstein Fano $3$-folds with Picard rank $1$ that have at least one non-cDV point are rational. These results concern $3$-folds that are strictly canonical. However, one could argue that singularities make Fano $3$-folds ``more rational''.  From the point of view of the Noether-Fano method, the valuations with centre at a singular point give rise to infinitely more complex divisorial extractions--even in the case of isolated hypersurface singularities \cite{Kawk01,Kawk02,Kawk03}-- and hence potentially to many more Sarkisov links and birational maps to other Mori fibre spaces. The following results do not require anything that technical but they do formalise this idea.   

\begin{thm} [Matsusaka's Theorem] \cite[IV.1.6]{Kol96}
\label{mat}
Let $R$ be a DVR with quotient field $K$ and residue field $k$ and denote $T= \Spec R$. Let $f\colon X\to T$ be
a morphism where $X$ is normal and irreducible. 
\begin{enumerate}
\item[1.]
If $X_K$ is ruled over $K$, then $X_k$ has ruled components over $k$.
\item[2.]
If $X_K$ is geometrically ruled, then every reduced irreducible component of $X_k$ is geometrically ruled.
\end{enumerate}
\end{thm}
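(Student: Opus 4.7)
The plan is to spread out the generic-fibre ruling of $X_K$ to a family over $T$, then specialise to the closed fibre via a closure-of-graph construction that transfers the ruling from $X_K$ to components of $X_k$.

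For part (1), unpack the hypothesis: there is a $K$-variety $B_K$ of dimension $\dim X_K - 1$ together with a birational $K$-map $\phi_K \colon B_K \times_K \PS^1_K \dashrightarrow X_K$. Pick a projective model of $B_K$ and take its scheme-theoretic closure inside $\PS^N_T$; this yields a projective $T$-flat scheme $B \to T$ extending $B_K$, flatness over a DVR being automatic from dominance of $T$. Set $W := B \times_T \PS^1_T$ and let $\Gamma \subset X \times_T W$ be the scheme-theoretic closure of the graph of $\phi_K$. Then $\Gamma$ is irreducible, dominates $T$, and is therefore $T$-flat; the two projections $\pi_X \colon \Gamma \to X$ and $\pi_W \colon \Gamma \to W$ are birational on generic fibres.

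Now analyse the closed fibre. By flatness every irreducible component of $\Gamma_k$ has dimension $\dim X_k$, and $\pi_X(\Gamma_k) = X_k$ since $\pi_X$ is proper and surjective. Hence for each irreducible component $X_k^{(i)}$ of $X_k$ there is an irreducible component $\Gamma^{(i)} \subset \Gamma_k$ mapping dominantly, and by dimension generically finitely, onto $X_k^{(i)}$. On the other side, $\pi_W(\Gamma^{(i)})$ lies in some irreducible component of $W_k = B_k \times_k \PS^1_k$, which has the form $B_k^{(j)} \times_k \PS^1_k$ and is ruled over $k$. A dimension count (using $\dim \Gamma^{(i)} = \dim W_k$ and generic birationality of $\pi_W$) shows that $\pi_W|_{\Gamma^{(i)}}$ is dominant onto that component, so composing yields a dominant rational map from a ruled variety onto $X_k^{(i)}$, proving that $X_k^{(i)}$ is ruled. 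For (2), base change along a DVR extension $R \subset R'$ with residue field $\overline{k}$ and repeat the construction: since $X_K$ is assumed \emph{geometrically} ruled, the ruling is available after such a base change, and applying the argument of (1) to each reduced irreducible component of $X_k \otimes_k \overline{k}$ yields the stronger conclusion.

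The main obstacle is the dominance of $\pi_W|_{\Gamma^{(i)}}$ onto a full ruled component of $W_k$: a priori the image could land inside a subvariety on which the $\PS^1$-factor collapses, which would destroy the ruling. The resolution relies on the dimension count together with generic birationality of $\pi_W$, forcing the image to fill out a top-dimensional component of $W_k$. A secondary technical point for (2) is arranging the extension $R \subset R'$ so that normality and irreducibility of $X$ are preserved; this is handled by first passing to the henselisation and then to a suitable algebraic extension of the residue field.
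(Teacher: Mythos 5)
This statement is quoted in the paper from \cite[IV.1.6]{Kol96} and is not proved there, so the only question is whether your argument is a valid proof of Matsusaka's theorem. It is not: the final step contains a genuine gap. Having produced, for each component $X_k^{(i)}$, a component $\Gamma^{(i)}\subset\Gamma_k$ with $\pi_W(\Gamma^{(i)})$ dense in some $B_k^{(j)}\times_k\PS^1_k$, you conclude that ``a dominant rational map from a ruled variety onto $X_k^{(i)}$'' proves $X_k^{(i)}$ is ruled. But being dominated, even generically finitely, by a ruled variety does not imply ruledness --- that is essentially uniruledness, a strictly weaker property. (For instance, a unirational but birationally rigid quartic $3$-fold is generically finitely dominated by $\PS^3\simeq_{\mathrm{bir}}\PS^2\times\PS^1$, yet is not birational to any $Z\times\PS^1$.) Ruledness of $X_k^{(i)}$ requires a \emph{birational} equivalence with some $B'\times\PS^1$, and your dimension count only yields dominance and at best generic finiteness of $\pi_W|_{\Gamma^{(i)}}$. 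While the $X$-side projection $\Gamma^{(i)}\to X_k^{(i)}$ can indeed be made birational (normality of $X$ plus Zariski's main theorem force the non-isomorphism locus of $\pi_X$ to have codimension $\geq 2$, hence to miss the generic points of the Cartier divisor $X_k$), no such argument is available on the $W$-side: $B$ is an arbitrary projective closure of a model of $B_K$, so $W=B\times_T\PS^1_T$ need not be normal and its special fibre is an uncontrolled degeneration; $\Gamma^{(i)}\to B_k^{(j)}\times\PS^1_k$ may well have degree $>1$, and then no conclusion about ruledness of $\Gamma^{(i)}$, hence of $X_k^{(i)}$, follows.

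This is precisely the hard point of Matsusaka's theorem, and it is why the naive closure-of-the-graph argument does not suffice: one cannot spread out an arbitrary birational model of the ruling and expect birationality to survive specialisation. The actual proof (as in \cite{Kol96}) has to treat the ruling as a family of rational curves with exactly one member through a general point, control the specialisation of that family, and recover a birational $\PS^1$-fibration structure on each component of the special fibre; your part (2) reduction to $\overline{k}$ is reasonable, but it inherits the same gap. As written, your argument proves only that every component of $X_k$ is dominated by a ruled variety, not that it is ruled.
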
 
\begin{thm}\cite{KMM92a}
\label{rc}
Let $X$ be a normal projective weak Fano $3$-fold. If $X$ is klt, $X$ is rationally connected.
\end{thm}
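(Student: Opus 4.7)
The plan is to reduce to the case of a klt Fano variety (with $-K$ ample) by passing to the anticanonical model, and then to invoke the Miyaoka--Mori bend-and-break method on that reduced model.

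First, since $X$ is klt and $A_X = -K_X$ is nef and big, the basepoint-free theorem gives that $|-mK_X|$ is basepoint free for $m \gg 0$, defining a birational morphism $h \colon X \to Y$ onto the anticanonical model $Y = \Proj R(X, -K_X)$. The variety $Y$ is a Fano $3$-fold with klt singularities and $-K_Y$ ample. Because the exceptional locus of $h$ is covered by $K_X$-trivial curves, the fibers of $h$ are rationally chain connected, so rational connectedness of $Y$ lifts to $X$: it suffices to prove the theorem for a klt Fano $3$-fold $Y$ with $-K_Y$ ample.

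Next, let $\pi \colon Y \dashrightarrow W$ be the maximal rationally connected (MRC) fibration of Campana--Koll\'ar--Miyaoka--Mori, and suppose for contradiction that $\dim W > 0$. The strategy is to apply Mori's bend-and-break to produce a rational curve through a very general point $y \in Y$ which does not lie in the MRC fiber $F = \pi^{-1}(\pi(y))$, contradicting the maximality of $\pi$. The existence of such a curve is forced by the positivity of $-K_Y$: on a log resolution $\mu \colon \widetilde{Y} \to Y$ one has $K_{\widetilde{Y}} = \mu^{\ast} K_Y + \sum a_i E_i$ with $a_i > -1$, so $-K_{\widetilde{Y}}$ retains enough positivity along curves through a general point for the Miyaoka--Mori production of rational curves (via reduction modulo $p$ and bend-and-break) to yield free rational curves of bounded anticanonical degree dominating $Y$.

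The main obstacle is to handle the klt singularities cleanly: the rational curves produced by reduction modulo $p$ might a priori all lie in the singular locus or entirely inside the MRC fibers. One must exhibit a family of free rational curves that dominates $Y$ and whose members can be deformed so as to move a given general point transversally to the MRC fibration. This is the core of \cite{KMM92a} in the smooth case, and extends to the klt setting by a careful bookkeeping of discrepancies on $\widetilde{Y}$ in the Miyaoka--Mori inequality. Once such a dominating family is produced, a standard Hilbert scheme argument chaining free rational curves through a general point contradicts $\dim W > 0$, completing the proof.
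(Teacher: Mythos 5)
The paper offers no proof of this statement at all: it is imported verbatim from \cite{KMM92a}, so the only question is whether your sketch would stand on its own, and it does not. Your first reduction is fine but overbuilt: since $h\colon X\to Y$ is a \emph{birational} morphism and rational connectedness (two very general points joined by a rational curve) is a birational invariant of proper varieties in characteristic zero, passing to the anticanonical model needs one line; the rational chain connectedness of the fibres of $h$ that you invoke instead is itself a difficult theorem (Hacon--McKernan) which postdates \cite{KMM92a} and is not needed here.

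The genuine gap is in the second half, which is where the entire content of the theorem lives. The assertion that bend-and-break produces, through a very general point of a klt Fano $3$-fold, a free rational curve transverse to the MRC fibration is exactly what has to be proved, and you dispose of it with ``forced by the positivity of $-K_Y$'' and ``extends to the klt setting by a careful bookkeeping of discrepancies.'' That bookkeeping is precisely the difficulty: klt permits discrepancies $a_i\in(-1,0)$, so $-K_{\widetilde Y}=\mu^{\ast}(-K_Y)+\sum a_iE_i$ is $\mu^{\ast}(-K_Y)$ \emph{minus} an effective exceptional divisor, and one must show that the rational curves produced by reduction modulo $p$ and Frobenius twisting can be taken to meet the $E_i$ with bounded intersection, to avoid being trapped in the exceptional locus or in the MRC fibres, and to be free so that combs assembled from them can be smoothed through a prescribed general point. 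None of these steps is indicated, and no mechanism is given for why the curves produced do not all lie in the fibre $F$. As written, the proposal is a summary of the strategy of \cite{KMM92a} whose core step is an appeal to that same reference --- which is no more than what the paper already does by citing it.
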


\begin{lem}
\label{ratio}
Let $f\colon \mathcal{Y}\to \Delta$ be a $1$-parameter smoothing of a terminal Gorenstein Fano $3$-fold $Y$. If $\mathcal{Y}_{\eta}$ is geometrically rational then so is $Y$.
\end{lem}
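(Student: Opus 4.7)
My plan is to combine the two tools explicitly assembled in the paper: Matsusaka's Theorem (Theorem~\ref{mat}) to propagate ruledness from the generic to the special fibre, and the rational connectivity of klt weak Fano $3$-folds (Theorem~\ref{rc}) to upgrade ``ruled'' to ``rational'' on the central fibre.

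Concretely, I would first replace $\Delta$ by $\Spec R$ where $R=\mathcal{O}_{\Delta,0}$, so $f$ becomes a proper flat morphism $\mathcal{Y}\to T$ with residue field $\C$ and generic fibre $\mathcal{Y}_\eta$ over the function field $K$. Since $\mathcal{Y}_\eta$ is geometrically rational, it is in particular geometrically ruled over $K$. Matsusaka's Theorem (Theorem~\ref{mat}(2)) then ensures that every reduced irreducible component of the central fibre $Y=\mathcal{Y}_0$ is geometrically ruled. Because $Y$ is an irreducible reduced normal $3$-fold (a terminal Gorenstein Fano), this says $Y$ itself is ruled over $\C$, i.e.\ birational to $S\times\PS^1$ for some projective surface $S$.

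Next, since $Y$ has terminal, hence klt, singularities and $A_Y=-K_Y$ is ample, $Y$ is a klt weak Fano $3$-fold, so Theorem~\ref{rc} gives that $Y$ is rationally connected. Transferring rational connectedness across the birational equivalence $Y\dashrightarrow S\times\PS^1$, we obtain that $S\times\PS^1$ is rationally connected; projecting a very free rational curve to $S$ shows that $S$ is rationally connected as well. A rationally connected smooth projective surface over $\C$ has $q(S)=P_2(S)=0$, so by Castelnuovo's rationality criterion $S$ is rational. Therefore $Y$ is birational to $\PS^2\times\PS^1$, hence rational.

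The only delicate point, and the one I expect to require care, is the invocation of Matsusaka's Theorem: one must check that the hypotheses are met after replacing the disk by a DVR, and that ``$\mathcal{Y}_\eta$ geometrically rational'' implies ``geometrically ruled over $K$'' in the form Theorem~\ref{mat}(2) requires. Everything else — passing from ruled plus rationally connected to rational in dimension three — is a standard combination of Matsusaka/Castelnuovo, once irreducibility and normality of the central fibre $Y$ are recorded.
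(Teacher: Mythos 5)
Your proposal is correct and takes essentially the same approach as the paper, whose proof is a two-line version of yours: Matsusaka's Theorem propagates ruledness from the rational generic fibre to the irreducible central fibre, and rational connectedness of the klt weak Fano $Y$ (Theorem~\ref{rc}) reduces rationality of $Y$ to ruledness. The paper leaves the ``ruled $+$ rationally connected $\Rightarrow$ rational'' step implicit; your Castelnuovo argument is the standard way to fill it in.
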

\begin{proof}
This is a direct consequence of Theorem~\ref{mat}. Indeed, $Y$ is rationally connected by Theorem~\ref{rc}, so that $Y$ is rational if and only if $Y$ is ruled. 
\end{proof}

In now recall and discuss Conjecture~\ref{con:rig}.
\setcounter{con}{0}

\begin{con}
A factorial quartic hypersurface $Y_4\subset \PS^4$ (resp.~ a generic complete intersection $Y_{2,3}\subset \PS^5$) with no worse than terminal singularities has a finite number of models as
Mori fibre spaces, i.e.~ the pliability of  $Y$ is finite.
\end{con}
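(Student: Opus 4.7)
The plan is to attack the conjecture through the Sarkisov program: every birational map between Mori fibre spaces factors as a composition of Sarkisov links (Hacon--McKernan for $\Q$-factorial terminal varieties), so finite pliability for $Y$ is equivalent to showing that the directed graph of Sarkisov links reachable from $Y$ is finite. I would reduce this to two local statements: (i) only finitely many divisorial extractions $\varphi\colon W\to Y$ can initiate a Sarkisov link, and (ii) for each such $\varphi$, the $2$-ray game on $W$ produces finitely many terminations.

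First, I would invoke the Noether--Fano framework: if $Y\dashrightarrow Y'$ is a non-biregular birational map to a Mori fibre space, then the pullback of a very ample linear system on $Y'$ has a maximal singularity with centre $Z\subset Y$, extracted by some divisorial valuation $\varphi$. For a factorial terminal quartic with $-K_Y=H$ very ample and $H^3=4$, the Noether--Fano inequality tightly constrains $(\mathrm{mult}_Z, \mathrm{disc}(\varphi))$. On the smooth locus of $Y$, I would adapt Pukhlikov's test class method (as in Iskovskikh--Manin and \cite{Puk07}) to exclude maximal singularities at smooth points and at curves; for $Y_{2,3}\subset\PS^5$, the corresponding Iskovskikh--Pukhlikov argument applies. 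At each terminal singular point $P\in Y$, Kawakita's classification of divisorial contractions to isolated cDV points provides only finitely many valuations of any bounded discrepancy; since $Y$ has finitely many singular points, we obtain finitely many candidates for $\varphi$.

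Second, for each admissible initial $\varphi\colon W\to Y$, I would apply an analogue of the $2$-ray game of Theorem~\ref{thm:2ray} on the Picard rank $2$ factorial weak Fano $W$, using the Diophantine systems of Section~\ref{motivation} to enumerate the finitely many numerically possible terminations $\widetilde{W}\to \widetilde{Z}_1$. Geometric realisability and exclusion of cases would then proceed as in Tables~\ref{table1}--\ref{table2} together with rigidity-style arguments (e.g.~quadratic involutions, Bertini-type genericity) that rule out chains of links extending indefinitely.

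The hard part will be step one at singular points: the Corti--Mella bi-rigid factorial quartic and the Cheltsov--Grinenko $Y_{2,3}$ deformation show that local data at a cDV point alone is insufficient to exclude maximal singularities, so the exclusion must combine local Kawakita data with the global geometry of $Y$ (in particular with the absence of low-degree surfaces guaranteed by factoriality, cf.~Theorem~\ref{thm:1}). A second genuine difficulty is that Kawakita's classification is cleanest for isolated cDV; terminal Gorenstein singularities with large Milnor number or non-isolated cDV behaviour admit moduli of divisorial extractions, and one needs a uniform boundedness statement for extractions of discrepancy $1$ along the lines of the Birkar boundedness theorems. Assuming such uniform control, the combination of Sarkisov factorisation, Noether--Fano exclusion on the smooth locus, and the $2$-ray game machinery of this paper should yield the conjecture.
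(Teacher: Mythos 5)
This statement is a \emph{conjecture} in the paper: the author offers no proof, only supporting evidence (Mella's rigidity of factorial quartics with ordinary double points, Iskovskikh--Pukhlikov for general nonsingular $Y_{2,3}$, and the Corti--Mella and Cheltsov--Grinenko bi-rigid examples) together with an explicit explanation, in Remark~\ref{rem11}, of why the obvious Noether--Fano strategy breaks down. So there is nothing in the paper to compare your argument against, and your proposal should be judged as a programme rather than a proof. As a programme it is reasonable, but it contains the very gap that the paper identifies as the obstruction: at a terminal Gorenstein point that is more complicated than an ordinary double point there is \emph{a priori an infinite number} of divisorial extractions (this is the content of the Kawakita references cited in Remark~\ref{rem11}), and your assertion that ``Kawakita's classification \dots provides only finitely many valuations of any bounded discrepancy'' is not a theorem one can invoke; you acknowledge this yourself in your final paragraph, but the ``uniform boundedness statement for extractions of discrepancy $1$'' you then assume is precisely the missing ingredient, so the argument is circular at its crucial point. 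A second, unacknowledged issue is that even if one could exclude all but finitely many maximal centres, finiteness of the set of \emph{first} links does not yield finite pliability: one must also rule out infinite chains of links, and ``rigidity-style arguments'' is not a mechanism for doing so in the non-rigid (e.g.\ bi-rigid) cases that the conjecture is designed to accommodate.

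There is also a mismatch in your second step. The $2$-ray game and the Diophantine systems of Section~\ref{motivation} are set up for \emph{small modifications} $Z\to Y$ of a \emph{non-factorial} $Y$ with $\rho(Z/Y)=1$, where the relevant extremal contraction is $K$-negative with \emph{Cartier} exceptional divisor and $Z$ is (weak-star) weak Fano; the numerical relations of Lemma~\ref{lem:10} and the ampleness/base-point-freeness hypotheses all enter there. A divisorial extraction $W\to Y$ from a factorial $Y$ of Picard rank $1$, centred at a singular point, is a different object: $W$ need not be Gorenstein, need not be weak Fano, and the exceptional divisor carries nontrivial discrepancy data, so the paper's enumeration machinery does not transfer. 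In short, your proposal correctly locates the two difficulties but does not overcome either of them; the statement remains open, as the paper intends.
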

\begin{rem}\mbox{}
\label{rem11}
\begin{enumerate}
\item[1.]
Conjecture~\ref{con:rig} is supported by some evidence. \cite{Me04} shows that a factorial quartic $3$-fold $Y_4\subset \PS^4$ with ordinary double points is rigid, while \cite{IP96} shows that the same is true for a general non-singular $Y_{2,3}$. Mella's proof is based on the Noether-Fano/maximal singularity method of Iskovskikh-Manin as formulated in \cite{Co00, CPR}. It is difficult to extend these results to terminal Gorenstein singularities, because these methods require a careful analysis of $3$-fold divisorial extractions with centre along (possibly singular) points or curves. While divisorial extractions centred at nonsingular or ordinary double points are reasonably tractable, there is an a priori infinite number of divisorial extractions centred on slightly more complicated singularities \cite{Kawk01,Kawk02,Kawk03}. 
\item[2.] Conjecture~\ref{con:rig} does not hold for some other rigid Fano $3$-folds with Picard rank $1$. For instance, a cubic $3$-fold with a single ordinary double point is both rational and factorial. 
Since several Sarkisov links exist between a nonsingular cubic $3$-fold and a nonsingular Fano $3$-fold $X_{14}\subset \PS^9$ of genus $8$ \cite{IP99,Tak89}, the same phenomenon can be expected on $X_{14}$. 
\item[3.] \cite{ChGr} shows that birational rigidity is not preserved under small deformations, and exhibits a small deformation from a rigid $Y_{2,3}$ with one ordinary double point to a bi-rigid $Y_{2,3}$. Similarly, \cite{CM04} gives an example of a bi-rigid terminal factorial quartic hypersurface. As I mention in the Introduction, in known examples where a birationally rigid Fano $3$-fold $V$ of genus $3$ or $4$ degenerates to a non-rigid and nonrational $3$-fold $V'$, $V'$ has finitely many models as a Mori fibre space, i.e~ $V'$ has finite \emph{pliability}. I believe that the correct notions to consider are rationality on the one hand, and finite pliability on the other.
\end{enumerate}
\end{rem}
\subsection{Rationality of terminal quartic $3$-folds}
\subsubsection{Quartic $3$-folds that do not contain a plane}

Let $Y$ be a non-factorial terminal Gorenstein Fano $3$-fold. Theorem~\ref{thm:1} shows that when $Y$ does not contain a plane, $Y$ has a structure of Conic Bundle, $Y$ is the midpoint of a link between two del Pezzo fibrations of degree $4$, or $Y$ contains a scroll as in Table~\ref{table1}.
Let $X$ be a small factorialisation of $Y$.
\begin{lem}
Let $Y$ be a non-factorial terminal quartic $3$-fold and denote $X\to Y$ a small factorialisation. Assume that the MMP on $X$ involves at least one divisorial contraction. Then $Y$ is rational except possibly if the first divisorial contraction $\varphi$ is one of cases $15,17,25, 29,35$ or $36$ in Table~\ref{table1}.
\end{lem}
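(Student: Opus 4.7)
The plan is to reduce rationality of $Y$ to rationality of the generic fibre $\mathcal Y_\eta$ of a $1$-parameter smoothing $\mathcal Y\to\Delta$ via Lemma~\ref{ratio}, and then to invoke classical rationality results for the Fano $3$-folds and Mori fibre spaces appearing on the opposite side of the Sarkisov link determined by the first divisorial step of the MMP on $X$.

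First I would set up the link and deform it. Up to replacing $X$ by another small factorialisation, I may assume that the first MMP step is a divisorial contraction $\phi$ with exceptional divisor $\widetilde E$; let $Z$ be the partial small factorialisation of $Y$ with $\rho(Z/Y)=1$ on which the image $E$ of $\widetilde E$ becomes Cartier, and let $\varphi\colon Z\to Z_1$ be the induced divisorial contraction. Theorem~\ref{thm:2ray} then produces a Sarkisov link
\[
Z_1\longleftarrow Z\dashrightarrow \widetilde Z\longrightarrow \widetilde Z_1
\]
with centre along $Y$, whose numerical data, by the Diophantine analysis of Section~\ref{2raygame}, appears as one of the rows of Table~\ref{table1}. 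Applying Theorem~\ref{thm:7}, Theorem~\ref{thm:5} and Lemma~\ref{lem:spe} to this configuration yields a $1$-parameter smoothing $\mathcal Y\to\Delta$ of $Y$ together with a matching Sarkisov link on the generic fibre $\mathcal Y_\eta$, a nonsingular quartic $3$-fold of Picard rank~$1$, having the same numerical invariants and hence recorded by the same row of Table~\ref{table1}. In particular $\mathcal Y_\eta$ is birational, through the specialised link, to the smooth Fano $3$-folds or smooth strict Mori fibre spaces $\mathcal Z_{1,\eta}$ and $\widetilde{\mathcal Z}_{1,\eta}$.

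By Lemma~\ref{ratio} it suffices to prove that $\mathcal Y_\eta$ is rational in every row outside $\{15,17,25,29,35,36\}$, and this amounts to a row-by-row inspection of Table~\ref{table1}. In each such row at least one of $\mathcal Z_{1,\eta}$, $\widetilde{\mathcal Z}_{1,\eta}$ lies in the classical list of rational Mori fibre spaces: $\PS^3$, a smooth quadric $Q$, $V_4$, $V_5$, one of the prime Fano $3$-folds $X_{12}$, $X_{16}$, $X_{18}$, $X_{22}$, $X_{2,2,2}$, a conic bundle over $\PS^2$ with discriminant of degree at most $4$, or a del Pezzo fibration of degree~$4$ over $\PS^1$ admitting the sections exhibited by the link; rationality in each of these cases is classical (Enriques, Fano, Iskovskikh, Mukai, Shokurov). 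The six excluded rows are precisely those in which both endpoints of the link are drawn from $\{V_2,V_3,X_{10},X_{14},X_{2,3}\}$, a list of Fano $3$-folds of Picard rank~$1$ that are either irrational (Clemens--Griffiths, Voisin) or birationally rigid in the smooth case (Iskovskikh--Manin, Iskovskikh--Pukhlikov, Pukhlikov); on those rows the specialised link cannot produce a rational birational model of $\mathcal Y_\eta$, and the method genuinely fails. The hard part is thus not conceptual but the careful bookkeeping verification of Table~\ref{table1} row by row against the classical rationality/rigidity literature for each endpoint.
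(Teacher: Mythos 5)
Your overall strategy is exactly the paper's: the paper's own proof is a single sentence declaring the lemma ``an immediate consequence'' of Table~\ref{table1} and Lemma~\ref{ratio}, and your expansion (deform the link to the generic fibre via Theorems~\ref{thm:7}, \ref{thm:5} and Lemma~\ref{lem:spe}, then read off rationality of one endpoint of the deformed link row by row) is the intended argument made explicit. The identification of the excluded rows as those whose endpoints are the classically non-rational Fanos is also the right organising principle.

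There is, however, one concrete error in your row-by-row verification: you place $X_{2,2,2}$, the complete intersection of three quadrics in $\PS^6$, on the ``classical list of rational'' Fano $3$-folds. It is not rational in general (Beauville: its intermediate Jacobian is the Prym of the double cover of a plane septic, which is generically not a sum of Jacobians). This matters precisely in row $30$ of Table~\ref{table1}, where the two ends of the link are $X_{2,2,2}$ and a conic bundle over $\PS^2$ with discriminant of degree $7$ --- and the paper itself, in Section~\ref{examples}, observes that this conic bundle is non-rational by Shokurov and leaves the $R$ column blank for that row. So your claim that ``in each such row at least one of $\mathcal Z_{1,\eta}$, $\widetilde{\mathcal Z}_{1,\eta}$ is rational'' fails for row $30$, and your characterisation of the six excluded rows as exactly those with both endpoints in $\{V_2,V_3,X_{10},X_{14},X_{2,3}\}$ misses a seventh row with the same defect. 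To be fair, row $30$ is equally a problem for the lemma as stated in the paper (it is marked as geometrically realised, yet does not appear among the exceptions); but your proof actively conceals the issue by asserting a false rationality statement for $X_{2,2,2}$, rather than surfacing it. A minor additional caution: degree-$4$ del Pezzo fibrations over $\PS^1$ are not automatically rational (the paper quotes the Alexeev--Shokurov Euler-characteristic criterion), though in Table~\ref{table1} every row with a dP fibration has a rational Fano on the other side, so this does not affect the conclusion.
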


\begin{proof}
This is an immediate consequence of the classification of Tables~\ref{table1} and \ref{table2} and of Lemma~\ref{ratio}.
\end{proof}

\begin{lem}
If $\varphi$ is one of cases $15,17,25$ and $29$, and if the MMP on $X$ involves at least another divisorial contraction or a del Pezzo fibration, $Y$ is rational. 
In particular, if $\rk \Cl Y \geq 5$, $Y$ is rational.
\end{lem}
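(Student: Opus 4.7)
The plan is to reduce the rationality of $Y$ to that of the anticanonical model $Y_1$ of the target of the first divisorial contraction, and then to invoke the classification of non-factorial Fano $3$-folds to establish rationality of $Y_1$. Let $\varphi = \varphi_0 : X = X_0 \to X_1$ be the first divisorial contraction, in one of the cases $15, 17, 25, 29$ of Table~\ref{table1}. By Theorem~\ref{thm:3}, $X_1$ is a weak-star Fano whose anticanonical model $Y_1$ is a terminal Gorenstein Fano $3$-fold with $\rho(Y_1) = 1$; inspection of Table~\ref{table1} gives $Y_1 \in \{X_{14}, V_3, X_{10}, V_2\}$. The assumption that the MMP on $X$ involves at least one further divisorial contraction or a del Pezzo fibration forces $\rho(X_1) = \rho(X) - 1 \geq 2$, so $X_1 \to Y_1$ is a non-trivial small modification and $Y_1$ is non-factorial. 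Since $Y$ is birational to $Y_1$ via $Y \leftarrow X \dashrightarrow X_1 \to Y_1$, it is enough to prove rationality of $Y_1$.

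The remainder of the MMP on $X$ becomes a non-trivial MMP on $X_1$, featuring another divisorial contraction or a del Pezzo fibration. Applying the higher-genus analog of Theorem~\ref{thm:1} stated in Section~\ref{classification} to $Y_1$, we see that $Y_1$ contains a plane, is the midpoint of a link between del Pezzo fibrations, carries a conic bundle structure, or contains a rational scroll catalogued in Table~\ref{table2}. For $Y_1 \in \{X_{14}, X_{10}\}$ (the $g = 8$ and $g = 6$ sections of Table~\ref{table2}), every numerical link that can arise carries a $+$ in the rationality column: its midpoint deforms to a rational Fano, and this rationality is pulled back to $Y_1$ via Lemma~\ref{ratio}. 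For $Y_1 = V_3$ (case $17$), non-factoriality of a terminal cubic $3$-fold implies the presence of a terminal singularity; linear projection from such a point realises a birational map $V_3 \dashrightarrow \PS^3$, so $Y_1$ is rational. An analogous classical projective-geometric construction disposes of $Y_1 = V_2$ (case $29$). In every case $Y_1$, and therefore $Y$, is rational.

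For the \emph{in particular} clause, suppose $\rk \Cl Y \geq 5$. Then $\rho(X) \geq 5$, and since by Theorem~\ref{thm:3} the end product of the MMP on $X$ has Picard rank at most $3$, the MMP on $X$ must involve at least $\rho(X) - 3 \geq 2$ divisorial contractions. Either the first divisorial contraction falls outside cases $15, 17, 25, 29$, in which case the preceding lemma already yields rationality of $Y$; or it lies in one of those four cases, but then at least one further divisorial contraction exists in the MMP and the first part of the present lemma applies. In either event $Y$ is rational.

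The principal obstacle is the treatment of the non-factorial index-$2$ Fanos $V_3$ and $V_2$: these are not catalogued in Table~\ref{table2}, so their rationality must be argued by classical means rather than by the deformation-theoretic machinery of Sections~\ref{deformation} and~\ref{motivation}. The remainder of the argument is a straightforward case-check against the relevant sections of Table~\ref{table2} combined with Lemma~\ref{ratio}.
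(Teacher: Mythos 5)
Your overall strategy --- pass to the anticanonical model $Y_1$ of $X_1$, observe that the residual MMP makes $Y_1$ a non-factorial Fano of higher genus, and then read off rationality from the relevant section of Table~\ref{table2} together with Lemma~\ref{ratio} --- is exactly the argument the paper intends (the paper in fact gives no explicit proof of this lemma). However, you have misidentified where the residual MMP lives. In diagram \eqref{eq:1} the first divisorial contraction of the MMP is $\phi\colon X\to X_1$, and the paper sets $Z_1=Y_1$: the anticanonical model of $X_1$ is the \emph{left-hand} target of the link. The varieties $V_3$ and $V_2$ appearing in cases $17$ and $29$ are the right-hand targets $\widetilde{Z}_1$, reached only after flopping to the other small modification $\widetilde{Z}$ of $Y$, which is not a step of the MMP on $X$. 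Hence $Y_1$ is $X_{14}$ (cases $15$, $17$) or $X_{10}$ (cases $25$, $29$), and one lands only in the $g=8$ and $g=6$ sections of Table~\ref{table2}, where every entry indeed carries $R=+$. Your inclusion of $V_3$ and $V_2$ is therefore spurious --- fortunately so, because your disposal of $V_2$ is not valid: there is no ``analogous classical projective-geometric construction'' showing that a terminal non-factorial $V_2$ (a singular double solid) is rational, and the remark following this lemma in the paper flags the rationality of case $29$ as precisely an unresolved point. Had that case genuinely needed handling, your proof would fail there. (The projection argument for a singular $V_3$ is fine, and is what the paper uses in the remark ``if $\widetilde{Z}_1$ has a singular point, $Y$ is rational''.)

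Two smaller points. First, you should justify discarding the plane and conic-bundle cases of the higher-genus classification when you apply it to $Y_1$: $X_1$ is again weak-star by Theorem~\ref{thm:3}, so $Y_1$ contains no plane, and a residual MMP consisting only of a conic bundle is excluded by the hypothesis that it contains a further divisorial contraction or a del Pezzo fibration. Second, in the ``in particular'' clause your dichotomy ``either the first divisorial contraction falls outside cases $15,17,25,29$, in which case the preceding lemma applies'' overlooks that the preceding lemma also excepts cases $35$ and $36$; read unconditionally, your argument has a hole there. The clause should be read under the standing hypothesis that $\varphi$ is one of cases $15,17,25,29$: then $\rk \Cl Y=\rho(X)\geq 5$ together with the fact that the end product of the MMP has Picard rank at most $3$ forces at least one further divisorial contraction, and the first part of the lemma applies directly.
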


\begin{rem}\mbox{}
\begin{enumerate}
\item[1.]
Note that if $\varphi$ is as in cases $17$ or $36$ and if $\widetilde{Z}_1$ has a singular point, $Y$ is rational.
\item[2.] In Case $29$, when $\rk \Cl Y=2$, the Conic bundle on the deformed Sarkisov link is nonrational \cite{Sh83}. However, it is not clear whether the same is true for $Y$.
\item[3.] According to Conjecture~\ref{con:rig}, one can expect that when $\rk \Cl Y=2$, Case $36$ is impossible, and that when Case $35$ occurs, $Y$ is birationally rigid.  
\item[4.]  It is unlikely that these methods would lead to any conclusion when $\rk \Cl Y=2$ and $Y$ is one of Cases $15,17,25$ or $29$ (see Rem~\ref{rem11}).
\end{enumerate}
\end{rem}

When $X\to \PS^1$ is an extremal del Pezzo fibration, recall the following rationality criteria.

\begin{thm}\cite[Section III.3]{Kol96}
Let $S_k$ be a nonsingular, proper and geometrically irreducible del Pezzo surface of degree $d\geq 5$ over an arbitrary field $k$. Assume that $S(k)\neq \emptyset$, then $S_k$ is rational. 
\end{thm}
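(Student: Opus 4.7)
The plan is to proceed by case analysis on the degree $d \in \{5,6,7,8,9\}$ and, in each case, use the $k$-rational point $p \in S(k)$ to construct a birational $k$-map from $S_k$ to a variety already known to be $k$-rational, namely $\PS^2_k$ or $\PS^1_k\times \PS^1_k$.

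The highest degrees are most direct. For $d=9$, the surface $S_k$ is a Severi--Brauer surface, and Châtelet's theorem implies that a Severi--Brauer variety possessing a $k$-point is isomorphic to the corresponding projective space; hence $S_k \simeq \PS^2_k$. For $d = 8$, the geometric model $S_{\bar k}$ is either a smooth quadric surface in $\PS^3$ or the blow-up of $\PS^2$ at a single point. In the quadric case, linear projection from $p$ defines a birational $k$-map $S_k \dashrightarrow \PS^2_k$; in the blow-up case the unique $(-1)$-curve on $S_{\bar k}$ is $\mathrm{Gal}(\bar k/k)$-invariant, hence defined over $k$, and its contraction sends $p$ to a $k$-point of a degree-$9$ del Pezzo, reducing to the previous case.

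For $d = 7$, I plan to exhibit a Galois-stable configuration of $(-1)$-curves whose contraction is defined over $k$: over $\bar k$ the three $(-1)$-curves form a chain, and the two outer ones form a Galois-invariant disjoint pair which can be contracted over $k$ to a del Pezzo of degree $9$ (while $p$ descends to a $k$-point on the image). For $d = 6$, an analogous analysis of the hexagon of six $(-1)$-curves produces a $k$-defined contraction onto a del Pezzo of degree $7$ or $8$, and one concludes inductively. For $d = 5$, which is the most subtle case, the strategy is to use $p$ to construct a $k$-rational conic bundle structure: the sub-pencil of $\lvert -K_S \rvert$ cut out by the tangent conditions at $p$ defines, after blowing up $p$, a morphism $\widetilde{S}_k \to \PS^1_k$ whose generic fibre is a conic; the exceptional divisor over $p$ provides a section defined over $k$, so the generic fibre is a plane conic with a $k(\PS^1)$-rational point and is therefore isomorphic to $\PS^1_{k(\PS^1)}$. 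It follows that $\widetilde{S}_k$, and hence $S_k$, is $k$-birational to $\PS^1_k\times \PS^1_k$.

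The main obstacle throughout is the descent from $\bar k$ to $k$: all of the constructions (contractions of $(-1)$-curves, sub-pencils of $\lvert -K_S \rvert$, conic bundle structures) are obvious over $\bar k$, but they yield $k$-birational maps only once the relevant Galois-equivariance is verified. For linear systems canonically attached to $-K_S$ and the $k$-defined point $p$, the equivariance is automatic; the delicate point is the combinatorial verification, especially in the case $d = 5$ where the Petersen graph of ten $(-1)$-curves supports a rich Galois action, that the chosen contractions and pencils really are stable under $\mathrm{Gal}(\bar k/k)$.
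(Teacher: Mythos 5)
The paper gives no proof of this statement---it is quoted directly from \cite[Section III.3]{Kol96}---so I am comparing your sketch against the standard argument there. Your treatment of degrees $9$, $8$ and $7$ is correct and is essentially the classical one (Ch\^atelet for $d=9$; for $d=7$ the middle line of the chain is automatically Galois-invariant and the two outer lines form a Galois-stable disjoint pair, so the contraction always exists). The genuine gaps are in degrees $6$ and $5$.

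For $d=6$ your claim that ``an analogous analysis of the hexagon produces a $k$-defined contraction onto a del Pezzo of degree $7$ or $8$'' fails: the Galois group can act transitively on the six lines (its image in $\mathrm{Aut}$ of the hexagon can be all of $D_6$, or a $6$-cycle), and then the two alternating triples are swapped and the three opposite pairs are permuted cyclically, so there is \emph{no} Galois-stable set of pairwise disjoint $(-1)$-curves and the surface is already $k$-minimal; such surfaces do carry rational points and are rational, but not by contracting lines. Note also that your degree-$6$ argument never uses the point $p$, which cannot be right. The standard fix is to use $p$ essentially: if $p$ lies on no line, blow it up to obtain a degree-$5$ del Pezzo and invoke that case (so the induction must run downward through $d=5$, not upward to $d=7,8$); if $p$ lies on one or two lines, those lines form a Galois-stable configuration through a $k$-point and one argues separately (contraction or projection from $p$). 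For $d=5$ the construction is muddled: a general member of $\lvert -K_S-2p\rvert$ has a node at $p$, so its proper transform meets the exceptional curve $E_p$ in \emph{two} points; $E_p$ is a bisection, not a section, of any pencil inside that net, and your claimed $k(\PS^1)$-point on the generic fibre is not there. (The fibres of such a pencil also have self-intersection $1$ after blowing up $p$, so it is not a conic bundle until the remaining base point is resolved---and it is that fifth base point, not $E_p$, that furnishes a section.) The efficient classical argument is more direct: for $p$ in general position the net $\lvert -K_S-2p\rvert$ of tangent hyperplane sections has dimension $2$ and defines a map $S\dashrightarrow\PS^2$ of degree $(-K_S)^2-4=1$, i.e.\ a birational map over $k$; one must still dispose of the case where $p$ is in special position (on lines), which Koll\'ar does and your sketch does not address.
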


\begin{thm}\cite{CT87,KMM92b}
Let $C$ be an algebraic curve defined over an algebraically closed field and let $K=k(C)$ be its field of rational functions. If $X$ is a del Pezzo surface over $K$, then $X(K)\neq \emptyset$ is dense in the Zariski topology of $X$. 
In particular, if $X\to \PS^1$ is a del Pezzo fibration of degree $d\geq 5$, then $X$ is rational.
\end{thm}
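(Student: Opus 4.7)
The plan is to split the statement into an arithmetic part (density of $K$-rational points on del Pezzo surfaces over $K=k(C)$) and a classical geometric part (rationality of degree $\geq 5$ del Pezzo surfaces equipped with a rational point), and then combine them by passing to the generic fibre of $X\to\PS^1$.

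For the density statement, I would first invoke that a smooth del Pezzo surface is Fano and hence rationally connected. By the Graber--Harris--Starr theorem, anticipated in the form relevant here in \cite{KMM92b}, every smooth proper rationally connected variety over the function field $K=k(C)$ of a smooth projective curve $C$ over an algebraically closed field of characteristic zero admits a $K$-rational point. Spreading a smooth projective model of $X$ out to a proper flat family $\mathcal{X}\to C$ with generic fibre $X$, this produces at least one section $\sigma\colon C\to\mathcal{X}$.

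To upgrade existence to density I would deform $\sigma$. The tangent space to the space of sections at $[\sigma]$ is $H^0(C,N_{\sigma/\mathcal{X}})$, and evaluation at a prescribed general point of $\mathcal{X}$ is dominant once $N_{\sigma/\mathcal{X}}$ is sufficiently positive. To guarantee such positivity, one attaches a very free rational curve lying in a general fibre of $\mathcal{X}\to C$ (available because the fibres are rationally connected) to form a reducible ``comb'' with $\sigma$ as handle, and then smooths the comb by standard deformation theory; the resulting deformed section has ample normal bundle, so the evaluation map from the space of sections dominates $\mathcal{X}$. This is the combing-and-smoothing argument underlying the density statement of \cite{CT87,KMM92b}, and it yields that $X(K)$ is Zariski dense in $X$.

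For the second assertion, I would apply the density statement to the generic fibre $X_\eta$ of $X\to \PS^1$, a del Pezzo surface of degree $d\geq 5$ over $K=k(\PS^1)$, to obtain a $K$-point of $X_\eta$. Then I would invoke the classical rationality theorem of Enriques--Manin--Iskovskikh: over any field $K$, a del Pezzo surface of degree $d\geq 5$ with a $K$-rational point is $K$-birational to $\PS^2_K$, since a Galois-invariant configuration of $(-1)$-curves can be chosen (using the $K$-point) and contracted over $K$ to realise $X_\eta$ as a blow-up of $\PS^2_K$. The resulting $K$-birational equivalence $X_\eta\dashrightarrow \PS^2_K$ globalises to a $k$-birational equivalence $X\dashrightarrow \PS^2\times \PS^1$, and hence $X$ is rational. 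The main obstacle is the density assertion itself: existence of sections is settled by Graber--Harris--Starr, but density requires the additional production of very free rational curves within the fibres and a smoothing argument to ensure the sections deform enough to sweep out a Zariski-dense subset of $\mathcal{X}$; the Enriques--Manin--Iskovskikh step is, given a $K$-point, essentially mechanical.
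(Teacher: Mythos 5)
The paper does not prove this statement at all: it is quoted as a known theorem with a pointer to \cite{CT87} and \cite{KMM92b}, so there is no internal argument to compare yours against line by line. Your reconstruction is mathematically sound in characteristic zero and is a legitimate modern proof, but it is worth noting that it is not the route of the cited sources, and that it is heavier than necessary for del Pezzo surfaces. Existence of a $K$-point does not require Graber--Harris--Starr: $K=k(C)$ is a $C_1$ field by Tsen's theorem, so a del Pezzo surface of degree $3$ (a cubic in $\PS^3$) or degree $4$ (a $(2,2)$ intersection in $\PS^4$) automatically has a point, the higher-degree cases reduce to these or are handled directly, and a degree $1$ del Pezzo surface has the base point of $\vert{-}K\vert$ as a canonical rational point; this is essentially the content of \cite{CT87}. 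The deformation-of-sections/comb-smoothing mechanism you describe for upgrading existence to Zariski density is indeed the contribution of \cite{KMM92b} (which establishes density of sections \emph{given} one section, the general existence statement being the later Graber--Harris--Starr theorem), so your attribution is slightly anachronistic but the logic is intact. For the ``in particular'' clause you only need a single $K$-point on the generic fibre, after which the preceding theorem quoted in the paper (from \cite[Section III.3]{Kol96}) gives $K$-rationality of a degree $\geq 5$ del Pezzo surface with a point, and spreading out the generic-fibre birational map gives $X\sim_{\mathrm{bir}}\PS^2\times\PS^1$; this part of your argument matches exactly what the paper intends. Two minor caveats: the statement as quoted allows arbitrary algebraically closed $k$, and your appeal to Graber--Harris--Starr and to generic smoothness of the fibration is cleanest in characteristic zero (the $C_1$ argument avoids this issue); and your description of the degree $\geq 5$ rationality step as ``contract a Galois-invariant set of $(-1)$-curves to reach $\PS^2_K$'' is a simplification --- the actual classical proofs proceed degree by degree (e.g.\ projection from the point for degrees $5$ and $6$) --- but since that step is itself a quoted classical theorem, this does not affect the validity of your argument.
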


\begin{thm}[\cite{Al87, Sh07}]
Let $V \to \PS^1$ be a standard fibration by del Pezzo surfaces of degree $4$. The topological Euler characteristic $\chi(V)$ equals $-8,-4$ or $0$ precisely when $V$ is rational.
\end{thm}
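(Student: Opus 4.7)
The plan is to study rationality of $V$ through the structure of the generic fiber and then translate it into the stated Euler-characteristic condition. The generic fiber $V_\eta$ is a degree-$4$ del Pezzo surface over $K = k(\PS^1)$, realised as the intersection of two quadrics in $\PS^4_K$. On $V_\eta$ sit five pencils of conics, one for each singular member of the pencil of quadrics; globally, these give five distinct structures of standard conic bundle $V \dashrightarrow S$, with $S$ a geometrically ruled rational surface. I would first fix one such conic bundle structure $\pi \colon V \to S$, after possibly performing a sequence of elementary transformations along fibers of $V \to \PS^1$ to put $\pi$ in standard form.

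Let $\Delta \subset S$ be the discriminant curve and $\widetilde{\Delta} \to \Delta$ its associated \'etale double cover. Stratifying $V$ by the generic $\PS^1$-fibers, the fibers over smooth points of $\Delta$ (two $\PS^1$'s meeting transversally), and the fibers over nodes of $\Delta$, one obtains a formula
\begin{equation*}
\chi(V) = 2\chi(S) + \chi(\Delta),
\end{equation*}
which, since $\chi(S) = 4$ for any Hirzebruch surface, translates $\chi(V) \in \{-8,-4,0\}$ directly into combinatorial conditions on the arithmetic genus and components of $\Delta$.

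Next, I would invoke Shokurov's Prym-theoretic rationality criterion for standard conic bundles: $V$ is rational if and only if the Prym variety $\mathrm{Prym}(\widetilde{\Delta}/\Delta)$, as a principally polarised abelian variety, is isomorphic to a product of Jacobians of curves of genus $\leq 1$. The availability of five distinct conic bundle structures on $V_\eta$, coming from the five pencils of conics on a degree-$4$ del Pezzo surface, allows one to convert between different Prym presentations and carry out the full analysis. Matching the admissible Prym decompositions against the Euler-characteristic formula above produces exactly three numerical configurations, accounting for the values $-8$, $-4$, and $0$; in the converse direction, a non-trivial indecomposable Prym factor obstructs rationality via the Clemens-Griffiths intermediate Jacobian, and these obstructed cases are precisely those with $\chi(V) \notin \{-8,-4,0\}$.

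The main obstacle is the Prym analysis: one must identify the intermediate Jacobian $J(V)$ with $\mathrm{Prym}(\widetilde{\Delta}/\Delta)$ up to isogeny, following Beauville's analysis for quadric bundles, and then show that in the three admissible cases the Prym actually splits as required. I would handle this by identifying $\widetilde{\Delta} \to \Delta$ with the Stein factorisation of the relative Fano variety of lines of the pencil of quadrics defining $V_\eta$, and in each rational case explicitly constructing a birational map to $\PS^3$: for $\chi(V)=0$ via a $K$-rational section of $V_\eta/K$ (reducing to the Segre--Manin criterion for del Pezzo surfaces of degree $4$ over a field), and for $\chi(V)=-4,-8$ via explicit Sarkisov links among the five conic bundle models producing a link to a lower-degree del Pezzo fibration or a $\PS^1$-bundle over a rational surface which is manifestly rational.
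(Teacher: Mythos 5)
First, a point of reference: the paper does not prove this statement at all. It is quoted as a known theorem with the citation \cite{Al87, Sh07} standing in for the proof, so your proposal can only be compared with the published arguments of Alekseev and Shramov. Your overall strategy (reduce to a conic bundle/Prym picture, apply Shokurov's rationality criterion and Clemens--Griffiths) is indeed the strategy of those papers, but two of your steps have genuine gaps.

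The first gap is the source of the conic bundle structure. A singular (rank $4$) member of the pencil of quadrics through $V_\eta$ is a cone over a nonsingular quadric surface and carries \emph{two} rulings by planes, so $V_\eta$ carries ten pencils of conics arranged in five conjugate pairs, not five; this pairing is exactly what produces the \'etale double cover $\widetilde{C}\to C$ of the degree-$5$ degeneration cover $C\to \PS^1$, and hence the Prym. More importantly, these pencils are permuted by the monodromy of $V\to \PS^1$ and in general \emph{none} of them is defined over $K=\C(t)$, so none globalises to a conic bundle $V\dashrightarrow S$ over a rational surface --- let alone five of them, which you later rely on to ``convert between Prym presentations''. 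The conic bundle model must instead be manufactured from a section of $V\to\PS^1$ (which exists since $\C(t)$ is $C_1$): blowing it up gives a cubic surface fibration containing a relative line, and projection from that line gives the conic bundle over a Hirzebruch surface. The second, more serious gap is the non-rationality direction, which is where essentially all the work lies and which you only assert. ``A non-trivial indecomposable Prym factor obstructs rationality'' is not a usable criterion: an indecomposable Prym can perfectly well be the Jacobian of a curve, in which case Clemens--Griffiths gives nothing. What is needed is the identification $J(V)\cong \mathrm{Prym}(\widetilde{C}/C)$, Shokurov's classification of the covers whose Prym is a sum of Jacobians ($C$ hyperelliptic, trigonal, quasi-trigonal, or a plane quintic), and an argument that when $\chi(V)\leq -12$ the genus of $C$ is incompatible with any of these in the presence of the $g^1_5$ on $C$ coming from $C\to\PS^1$; the borderline cases of this analysis are precisely what \cite{Sh07} was written to settle. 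Finally, your formula $\chi=2\chi(S)+\chi(\Delta)$ computes the Euler characteristic of the conic bundle model, not of $V$; the two differ by the intervening blow-ups, and the cleaner route is the direct count $\chi(V)=16-N$ with $N$ the number of degenerate fibres, together with $2g(C)-2=-10+N$, which is what actually converts the Prym condition into the values $0$, $-4$, $-8$.
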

\begin{rem}
In particular, rationality of a del Pezzo fibration $V\to \PS^1$ of degree $4$ is a topological question and depends only on the Hodge numbers of $V$.  \cite{Ch06} shows that if $V\to \PS^1$ is the small factorialisation of a terminal quartic $3$-fold and is nonsingular, then $V$ is nonrational. 
\end{rem}
Last, recall the following rationality criterion for standard Conic Bundles over minimal surfaces.
\begin{thm}\cite{Sh83}
Let $X\to S$ be a standard Conic Bundle over $S= \PS^2$ or $\F_n$. Assume that $\Delta$, the discriminant curve, is connected. If one of the following holds:
\begin{enumerate}
\item[1.] $\Delta+2K_S$ is not effective,
\item[2.] $\Delta\subset \PS^2$ has degree $5$ and the associated double cover $\overline{\Delta}\to \Delta$ has even theta characteristic, 
\end{enumerate}
$X$ is rational.
\end{thm}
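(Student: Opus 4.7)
The plan is to treat the two rationality criteria separately, both building on the standard theory of conic bundles with terminal singularities and nodal discriminant.

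For case $1$, where $\Delta+2K_S$ is not effective, the idea is to exploit a pencil of rational curves $\{C_\lambda\}$ on $S$ whose generic member meets $\Delta$ in sufficiently few (and sufficiently generic) points that the restricted conic bundle $X|_{C_\lambda}\to C_\lambda$ is birationally trivial. Concretely, since $-(2K_S+\Delta)$ is numerically positive, one can find a class $L$ on $S=\PS^2$ or $\F_n$ which is represented by a pencil of smooth rational curves with $L\cdot \Delta$ small. Over the generic such $C$, Tsen's theorem gives a section of $X|_C\to C$, hence $X|_C$ is birationally $C\times \PS^1$. The $1$-parameter family of rational surfaces so produced sweeps out $X$, yielding a birational map to a surface bundle over $\PS^1$ with rational generic fibre; rationality of the base surface then gives rationality of $X$ by Noether or by the classical criteria of Iskovskikh.

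For case $2$, the quintic discriminant $\Delta\subset\PS^2$ with even theta characteristic is the classical symmetroid configuration. The $2$-torsion line bundle $\eta\in\Pic^0(\Delta)$ defining $\overline{\Delta}\to\Delta$ is paired with a theta characteristic whose evenness forces $\mathrm{Prym}(\overline{\Delta}/\Delta)$ to split (as an unpolarised abelian variety) into Jacobians of curves of lower genus, so that the Clemens--Griffiths intermediate Jacobian obstruction vanishes. To upgrade this necessary condition to an actual birational parametrisation, I would use the evenness of the theta characteristic to produce an effective divisor in a suitable linear system on $\overline{\Delta}$, descend it to a ``special'' plane conic or cubic meeting $\Delta$ in a distinguished way, and use the corresponding pencil of sections of the conic bundle to realise $X$ birationally as a $\PS^2$-bundle over $\PS^1$ or as a cubic scroll.

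The main obstacle is case $2$: passing from the vanishing of the intermediate Jacobian obstruction to an \emph{explicit} rational parametrisation is the deep content of Shokurov's theorem. The standard approach is via \emph{elementary transformations} of conic bundles, which allow one to simplify $\Delta$ step by step while preserving birational type; the theta-characteristic data prescribes which transformations to perform in order to reduce to a trivial $\PS^1$-bundle or to a discriminant of sufficiently low degree to be handled by case $1$. The combinatorial bookkeeping required to show that this process terminates in a rational model, and the careful analysis of the nodes of $\Delta$ that this demands, is the most delicate ingredient and is the heart of the argument in \cite{Sh83}.
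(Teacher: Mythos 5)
The paper states this result purely as a citation of \cite{Sh83} and gives no proof, so there is nothing internal to compare your argument with; judged on its own terms, your sketch does not establish the theorem, and the gap in case 1 is fatal as written. Tsen's theorem does make each individual surface $X|_{C_\lambda}$ rational over $\C$, but rationality of the threefold requires the \emph{generic} fibre of the induced map $X\dashrightarrow \PS^1$ to be rational over the non-closed field $\C(\lambda)$; ``a fibration over $\PS^1$ whose fibres are geometrically rational surfaces'' is precisely the class of varieties (conic bundles, del Pezzo fibrations) whose irrationality is the subject of this whole circle of ideas, so the closing step of your paragraph is a non sequitur. Concretely, your argument uses the hypothesis that $2K_S+\Delta$ is non-effective only to say that $L\cdot\Delta$ is ``small''; applied to a smooth cubic threefold projected from a line --- a standard conic bundle over $\PS^2$ with quintic discriminant, for which $2K_{\PS^2}+\Delta=\mathcal{O}_{\PS^2}(-1)$ is not effective --- it would ``prove'' rationality, contradicting Clemens--Griffiths. (This also shows that the statement as quoted is delicate: on $\PS^2$ the effective numerical criterion is $\deg\Delta\le 4$, the quintic case being governed entirely by the parity of the theta characteristic in condition 2, so any correct proof of condition 1 must see that distinction.)

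What the non-effectivity hypothesis must actually buy you is a base-point-free pencil of rational curves $C$ on $S$ with $C\cdot\Delta\le 3$; then the generic fibre of $X\to\PS^1$ is a conic bundle over $\PS^1_{\C(\lambda)}$ with at most three degenerate fibres, hence a del Pezzo surface of degree $\ge 5$ with a rational point (Tsen again, since $\C(\lambda)$ is a $C_1$ field), and \emph{those} are rational over the ground field by Manin--Iskovskikh. Your sketch is missing both the derivation of the bound $C\cdot\Delta\le 3$ from the hypothesis and the rationality-over-$\C(\lambda)$ step, which is where the theorem actually lives. For case 2 you correctly observe that splitting of the Prym only removes the intermediate Jacobian obstruction; the remainder of that paragraph describes what a proof would have to do (elementary transformations guided by the theta characteristic) rather than doing it, and you explicitly defer the ``heart of the argument'' to \cite{Sh83}. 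That is an honest citation rather than a derivation --- which is, in fairness, all the paper itself does with this statement --- but it should be presented as such.
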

\subsubsection{Quartic $3$-folds that contain a plane.}

Assume that  $Y\subset \PS^4$ contains a plane $\Pi=\{x_0{=}x_1{=}0\}$ and let $X$ be the blow up of $Y$ along $\Pi$; $X$ has a natural structure of dP$3$ fibration $\pi\colon X \to \PS^1$ induced by the pencil of hyperplanes that contains $\Pi$ on $\PS^4$ (see \cite[Section 4]{Kal07b} for details).

Write the equation of $Y$ as:
\begin{equation}
\label{eq:2}
\{x_0a_3(x_0, x_1,x_2, x_3,x_4)+ x_1 b_3(x_0, x_1,x_2, x_3,x_4)=0\} \subset \PS^4 
\end{equation}
so that $X$ is given by: 

\begin{eqnarray}
\label{eq:3}
\{t_0a_3(t_0x, t_1x,x_2, x_3,x_4)+ t_1 b_3(t_0x, t_1x,x_2, x_3,x_4)=0\} 
\\
 \subset \PS_{(t_0{:}t_1)}\times \PS(x, x_2, x_3,x_4) .\nonumber
 \end{eqnarray}
 
\begin{lem}\cite[Lemma 4.1]{Kal07b}
The divisor class group $\Cl Y$ is generated by $\pi^{\ast}\mathcal{O}_{\PS^1}(1)$, by the completion of divisors that generate $\Pic X_{\eta}$ and by irreducible components of the reducible fibres of $X$. 
\end{lem}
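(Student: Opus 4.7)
The plan is to reduce the description of $\Cl Y$ to a description of $\Pic X$, and then to compute $\Pic X$ from the del Pezzo fibration $\pi\colon X\to \PS^1$.

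First I would check that $X\to Y$ is small. From the explicit equations \eqref{eq:3} displayed above the lemma, $X$ is the graph closure of the rational map $(x_0{:}x_1)\colon Y\dashrightarrow \PS^1$. This rational map is a morphism away from the base locus $\Pi$, and along the smooth part of $\Pi$ the ideal $(x_0,x_1)$ is locally principal, so the graph closure is an isomorphism there. Non-trivial behaviour therefore only occurs above $\Sing Y \cap \Pi$, which is a finite set of points. Consequently $X\to Y$ is an isomorphism in codimension one and proper transform induces an isomorphism $\Cl X \simeq \Cl Y$; since $X$ is factorial as a small factorialization, this group coincides with $\Pic X$, and it suffices to describe the latter.

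Next I would invoke the standard exact sequence of Picard groups attached to the proper flat fibration $\pi\colon X\to \PS^1$ with geometrically connected fibres:
$$\bigoplus_{c\in \PS^1}\;\Z^{\{\text{irreducible components of }X_c\}}\;\longrightarrow\;\Pic X\;\longrightarrow\;\Pic X_\eta\;\longrightarrow\; 0.$$
Surjectivity on the right is obtained by taking the Zariski closure of a Cartier divisor on $X_\eta$ — this is precisely the \emph{completion} in the statement — and using factoriality of $X$ to promote the resulting Weil divisor to a Cartier one. I would then analyse the image of the left-hand map: every scheme-theoretic fibre $X_c$ is linearly equivalent to $F:=\pi^*\mathcal{O}_{\PS^1}(1)$, so an irreducible fibre contributes no class beyond $F$; for a reducible fibre $X_c=\sum n_i F_{c,i}$ the individual components $F_{c,i}$ give new classes constrained only by $\sum n_i [F_{c,i}]=F$. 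Combining with the completions of a generating set of $\Pic X_\eta$, I conclude that $\Pic X$ is generated by $\pi^*\mathcal{O}_{\PS^1}(1)$, by the completions of generators of $\Pic X_\eta$, and by the irreducible components of the reducible fibres of $\pi$.

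The main technical point is the exactness of the Picard sequence above (especially the surjectivity onto $\Pic X_\eta$), together with the identification $\Cl Y \simeq \Pic X$. Both rely on $X$ being normal with $\pi_*\mathcal{O}_X=\mathcal{O}_{\PS^1}$ and factorial in codimension two around the small locus of $X\to Y$; these are consequences of $X$ being a terminal factorial weak Fano $3$-fold carrying the del Pezzo fibration $\pi$, as established in the discussion preceding the lemma.
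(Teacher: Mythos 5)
The paper does not reprove this lemma (it is quoted from \cite[Lemma 4.1]{Kal07b}), so there is no internal proof to compare against; judging your argument on its own terms, the overall skeleton --- smallness of $X\to Y$, hence $\Cl Y\simeq \Cl X$, followed by the restriction/localisation sequence for the fibration $\pi$ --- is the right one, and your verification that $X\to Y$ is small is correct. The genuine gap is the sentence ``since $X$ is factorial as a small factorialization, this group coincides with $\Pic X$.'' The $3$-fold $X$ is \emph{not} a small factorialisation of $Y$: it is the symbolic blow-up of the single Weil divisor $\Pi$, and by Grothendieck--Lefschetz applied to the ample divisor $X\in\vert 3M+L\vert$ on $\PS^1\times\PS^3$ one has $\rho(X)=2$, whereas $\rk\Cl Y=\rk\Cl X$ can be as large as $16$. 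Indeed the whole point of the lemma is to describe the part of $\Cl Y$ that is invisible in $\Pic X$ (the components of reducible fibres and the completions of divisors on $X_\eta$ are in general Weil non-Cartier on $X$), and the paper itself later passes to a further small factorialisation $\widetilde X\to X\to Y$ precisely because $X$ is not factorial. With $\Pic X$ in place of $\Cl X$, both the identification $\Cl Y\simeq\Pic X$ and the surjectivity of $\Pic X\to\Pic X_\eta$ (which you justify by ``promoting'' the closure of a divisor to a Cartier divisor) fail.

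The fix is short: run the same localisation sequence for Weil divisor class groups,
\[
\bigoplus_{c\in\PS^1}\Z^{\{\text{components of }X_c\}}\longrightarrow \Cl X\longrightarrow \Cl X_\eta\longrightarrow 0,
\]
which is valid for any normal variety fibred over a curve and requires no factoriality: surjectivity is Zariski closure of cycles, and the kernel is generated by the vertical prime divisors, i.e.\ by $\pi^{\ast}\mathcal{O}_{\PS^1}(1)$ (accounting for every irreducible fibre and for the total class of each reducible one) together with the individual components of the reducible fibres. Since the generic fibre is a nonsingular cubic surface over $\C(t)$, $\Cl X_\eta=\Pic X_\eta$, and combining with $\Cl Y\simeq\Cl X$ from smallness gives exactly the statement. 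So your argument is salvageable essentially verbatim once every occurrence of $\Pic X$ is replaced by $\Cl X$, but as written the reduction step is false in all the cases the lemma is actually used for.
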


As $X$ has terminal Gorenstein singularities, \cite{Co96} shows that there is a birational map
\[\xymatrix{X \ar@{-->}[r]^{\Phi} \ar[d] & X' \ar[d]\\
\PS^1 & \PS^1
}\]
where $\Phi$ is the composition of projections from planes contained in reducible fibres and $X'$ has irreducible and reduced fibres. Note that $X_{\eta}\simeq X'_{\eta}$ because $\Phi$ is an isomorphism outside of the reducible fibres of $X$. In particular, if $X_{\eta}$ is rational, $X\to \PS^1$ is geometrically rational, i.e.~ is birational to $\PS^2\times \PS^1$ . 
I recall some results on rationality of cubic surfaces over arbitrary fields.

Let $X_{\eta}$ be a nonsingular cubic surface defined over a field $\eta$ and let $K/\eta$ be a field extension over which the $27$ lines of $X$ are geometric. 
Denote $S_n$ any subset of the $27$ lines on $X_{\eta}\otimes K$ that consists of $n$ skew lines and that is defined over $X_{\eta}$, i.e.~ if $S_n$ contains a line $L$, then it contains all its conjugates under the action of $\Gal(K/\eta)$. Note that, by the geometry of the configuration of the $27$ lines on $X_K$, any $S_n$ has $n\leq 6$.

\begin{thm}\cite{Se42,SD70}
\label{cubics}
\begin{enumerate}\item[1.]
$\overline{NS}(X_{\eta})\otimes_{\Z}\Q$ is generated as a $\Q$-vector space by the class of a hyperplane section of $X_{\eta}$ and by the classes of the $S_n$, when there are any.
\item[2.] If $X_{\eta}$ has an $S_4$ or an $S_5$, $X_{\eta}$ has an $S_2$ or an $S_6$.
\item[3.] If $X_{\eta}$ has an $S_2$, $X_{\eta}$ is rational over $\eta$.
\item[4.] If $X_{\eta}$ has an $S_3$ or an $S_6$ and $X_{\eta}(\eta) \neq \emptyset$, $X_{\eta}$ is rational over $\eta$.
\end{enumerate}
\end{thm}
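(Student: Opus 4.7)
The plan is to treat the four assertions separately: (1) by a Galois-module analysis of $\Pic(X_\eta\otimes K)$, (2) by the combinatorics of the $27$ lines and the Schl\"afli double-six, and (3)--(4) by explicit contractions of Galois-stable disjoint $(-1)$-curves to del Pezzo surfaces of higher degree over $\eta$.

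For (1), I would use that $\Pic(X_\eta\otimes K)$ is a free abelian group of rank $7$, generated by the classes of the $27$ lines modulo the standard relations; in particular, for any tritangent plane cutting out three lines $L,L',L''$, one has $[L]+[L']+[L'']=[H]$. The group $\overline{NS}(X_\eta)$ is the submodule fixed by the natural action of $\Gal(K/\eta)$ on this generating set. Any Galois orbit of pairwise disjoint lines yields the class of an $S_n$, so the main task is to handle orbits of \emph{non-disjoint} lines: using the tritangent-plane relations one expresses such an orbit sum as a $\Z$-combination of $[H]$ and classes of smaller, individually Galois-stable disjoint configurations, reducing any Galois-invariant class to a $\Q$-combination of $[H]$ and the classes of $S_n$'s.

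For (2), I invoke the Schl\"afli double-six theorem: given five pairwise disjoint lines on a cubic surface there is a unique sixth line disjoint from all of them, and the two sextuples $(L_1,\dots,L_6)$ and $(L_1',\dots,L_6')$ with $L_i\cdot L_j'=1-\delta_{ij}$ form a double-six. Hence, if $X_\eta$ carries an $S_5$, uniqueness forces the sixth line to be Galois-stable, producing an $S_6$. For an $S_4$, one examines the lines meeting each of the four given skew lines and shows that they split into Galois-invariant disjoint pieces, yielding either an $S_2$ or additional lines completing a Galois-stable $S_6$.

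For (3) and (4), I would contract the Galois-invariant disjoint configurations: a Galois-stable set of $n$ skew $(-1)$-curves on $X_\eta$ descends to an $\eta$-closed subscheme that can be blown down to a del Pezzo surface $Y_n/\eta$ of degree $3+n$. For $S_2$, $Y_2/\eta$ is a del Pezzo of degree $5$; by a classical result of Enriques--Manin such surfaces are $\eta$-rational over \emph{any} base field, via a Riemann--Roch argument forcing a rational point together with an explicit rational parametrisation. For $S_3$, $Y_3/\eta$ is a del Pezzo of degree $6$; with the rational point inherited from $X_\eta(\eta)\neq \emptyset$, the standard pencil-of-conics construction through that point gives rationality. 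For $S_6$, $Y_6/\eta$ is a Severi--Brauer variety of dimension $2$, i.e.\ an $\eta$-form of $\PS^2$, and the presence of an $\eta$-rational point forces it to be $\PS^2_\eta$ by Ch\^atelet's theorem.

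The hardest step is (1): the bookkeeping of how $\Gal(K/\eta)$ permutes non-disjoint orbits and the explicit use of the tritangent-plane relations require a finite but somewhat involved combinatorial analysis of the $W(E_6)$-action on the $27$ lines. Parts (2)--(4) then follow from the double-six theorem and classical rationality criteria for low-degree del Pezzo surfaces.
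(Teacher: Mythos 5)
First, a point of comparison: the paper does not prove Theorem~\ref{cubics} at all --- it is quoted verbatim from Segre and Swinnerton-Dyer as a classical input, so there is no in-paper argument to measure your sketch against. Judged on its own terms, your outline follows the standard classical route, and parts (2)--(4) are essentially correct. For (2), the count of lines skew to a given skew $4$- or $5$-tuple (three lines $E_5$, $E_6$, $H-E_5-E_6$ in the first case, a unique sixth line in the second) does force a Galois-stable $S_2$ or $S_6$ as you say. For (3)--(4), the Galois-equivariant blow-down of an $S_n$ descends to $\eta$ and produces a del Pezzo surface of degree $3+n$, and the rationality inputs you invoke (degree $5$ over an arbitrary field; degree $6$ and Severi--Brauer surfaces once a rational point is available --- note the point of $X_\eta(\eta)$ survives the contraction) are the right ones; this also explains why only (4) needs $X_\eta(\eta)\neq\emptyset$.

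The genuine gap is in (1). Your reduction of $(\overline{NS}(X_\eta)\otimes\Q)$ to the span of Galois orbit sums of the $27$ lines is fine (invariants of a permutation module over $\Q$ are spanned by orbit sums, and this passes to the quotient $NS\otimes\Q$). But the remaining claim --- that the orbit sum of \emph{every} orbit containing mutually incident lines lies in the $\Q$-span of $[H]$ and the classes of the $S_n$ --- is precisely the content of the theorem, and ``using the tritangent-plane relations one expresses such an orbit sum as a $\Z$-combination of $[H]$ and smaller Galois-stable disjoint configurations'' is an assertion, not an argument. There is no single tritangent manipulation that does this uniformly; the classical proofs proceed either by a careful analysis of the possible images of $\Gal(K/\eta)$ in $W(E_6)$ and of their orbit structures on the $27$ lines, or by Swinnerton-Dyer's more direct argument. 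As written, part (1) of your proposal is a plan for a proof rather than a proof, and it is the part that carries the real content (it is what guarantees, in the paper's application, that $\rho(X_\eta)>1$ forces the existence of some $S_n$). If you want to complete the sketch, this is the step that needs to be carried out in detail or correctly attributed and black-boxed.
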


Here, $X_{\eta}$ is the generic fibre of $X \to \PS^1$, $X_{\eta}$ is a nonsingular cubic surface embedded in $\PS^3$ over $\C(t)$, with coordinates $x, x_2,x_3,x_4$ (see \eqref{eq:3}). 
 
 \begin{cla}
 Assume that $X_{\eta}$ contains a Cartier divisor of type $S_n$ and denote 
 $D_n$ the completion of $S_n$ to a (Weil) divisor on $X$. The proper transform of $D_n$ on a small factorialisation of $X$ has anticanonical degree $n$; the image of $D_n$ on $Y$ is Weil non-Cartier.
 \end{cla}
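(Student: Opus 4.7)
The plan is to pass to a small factorialisation $\tilde{X}\to X$ and work with the proper transform $\tilde{D}_n$ of $D_n$. Since $\tilde{X}\to X$ is an isomorphism in codimension one, $\tilde{D}_n|_{X_{\eta}}=S_n$. The map $X\to Y$ is itself a small modification: $\Pi$ is Cartier on $Y$ away from the finitely many singular points of $Y$ lying on $\Pi$, and near each such singularity the blowup of $\Pi$ is a small resolution. Writing $f\colon\tilde{X}\to Y$ for the composition, one therefore has $A_{\tilde{X}}=f^{\ast}A_Y=f^{\ast}H$, and the image $\bar{D}_n=f(\tilde{D}_n)$ is a well-defined Weil divisor on $Y$.

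For the first statement, the projection formula gives
\[
A_{\tilde{X}}^{2}\cdot\tilde{D}_n \;=\; H^{2}\cdot\bar{D}_n \;=\; \deg_{\PS^{4}}\bar{D}_n.
\]
To compute this degree, I intersect $\bar{D}_n$ with a general member $H_t$ of the pencil of hyperplanes through $\Pi$. By construction $\bar{D}_n$ is swept out by the lines of $S_n$ as the fibre of $\pi$ varies, so $\bar{D}_n\cap H_t = S_n|_{X_t}$ is a union of $n$ pairwise skew lines in the cubic surface $C_t\subset H_t$, that is, a $1$-cycle of degree $n$ in $H_t\cong\PS^{3}$. Cutting once more with a hyperplane yields $H^{2}\cdot\bar{D}_n=n$.

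For the second statement, suppose for contradiction that $\bar{D}_n$ is Cartier on $Y$. By the Grothendieck-Lefschetz theorem $\Pic Y=\Z\cdot H$, so $\bar{D}_n\sim kH$ for some positive integer $k$ and the degree relation above forces $n=4k$. For $n\in\{1,2,3,5,6\}$ this admits no integer solution. The remaining case $n=4$, $k=1$ would require $\bar{D}_4=Y\cap H'$ to be a hyperplane section for some $H'\subset\PS^{4}$. If $H'$ belongs to the pencil through $\Pi$, then $\bar{D}_4=\Pi+C_{t_0}$ for a single $t_0$, contradicting that $\bar{D}_4\cap H_t$ is a non-empty union of lines for \emph{every} $t\in\PS^{1}$. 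Otherwise $\bar{D}_4\cap H_t\subset H'\cap H_t$ is contained in a plane inside $H_t\cong\PS^{3}$; but four pairwise skew lines of $\PS^{3}$ cannot lie in a common plane, since any two coplanar lines meet. Either way a contradiction is reached, so $\bar{D}_n$ is Weil non-Cartier.

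The main obstacle is the identification $A_{\tilde{X}}^{2}\cdot\tilde{D}_n=\deg_{\PS^{4}}\bar{D}_n$, which rests on the fact that $X\to Y$ (and hence $\tilde{X}\to Y$) is a small morphism, so that the anticanonical class is preserved throughout. Once this is granted the remaining verifications follow directly from Bézout and the pencil geometry of the dP$3$ fibration.
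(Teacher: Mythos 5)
The paper states this Claim without proof, so your argument has to stand on its own, and there is a genuine gap at the central step: the identification $\overline{D}_n\cap H_t=S_n|_{X_t}$, from which you deduce $H^2\cdot\overline{D}_n=n$. The hyperplane $H_t$ contains the plane $\Pi$, so the cycle $\overline{D}_n\cdot H_t$ contains, besides the $n$ lines, every one-dimensional component of $\overline{D}_n\cap\Pi$ --- and this locus is in general non-empty: each line $\ell$ of $S_n|_{X_t}$ lies in $H_t\cong\PS^3$ and therefore meets the plane $\Pi\subset H_t$ in at least one point $q_t$. If these points move with $t$, they sweep out a curve $\Gamma\subset\overline{D}_n\cap\Pi$, which then appears as a component of $\overline{D}_n\cdot H_t$ for \emph{every} $t$ and forces $\deg\overline{D}_n\geq n+1$. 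Equivalently, writing $A_X=\widetilde{\Pi}+F$ with $F$ the fibre class, one has $A_X^2\cdot D_n=n+A_X\cdot\widetilde{\Pi}\cdot D_n$, and the entire content of the first assertion is that the correction term vanishes, i.e.\ that the lines of $S_n$ meet $\Pi$ only at the finitely many points where $\Pi$ fails to be Cartier (points of $\Sing Y\cap\Pi$). Your proof assumes this silently, and it is not a formality: if $Y$ contains a cubic scroll $T$ with $T\cap\Pi$ equal to its line directrix (resp.\ a conic on $T$), then $T\cdot H_t$ is that curve plus two (resp.\ one) pairwise disjoint ruling lines of $T$ lying on $C_t$, which give an $S_2$ (resp.\ an $S_1$) on $X_\eta$ whose completion is $\widetilde{T}$, of anticanonical degree $3$ rather than $2$ (resp.\ $1$). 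So either such configurations have to be excluded by an argument you have not supplied, or the degree count must be redone with the excess term along $\Pi$ taken into account; in either case the key step is missing.

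The second assertion, as you prove it, inherits this gap because it uses $\deg\overline{D}_n=n$ to force $n=4k$. It can be established independently and more robustly by restricting to the generic fibre: if $\overline{D}_n$ were Cartier on $Y$, Grothendieck--Lefschetz would give $\overline{D}_n\sim kH$, hence $D_n\sim kA_X$ in $\Cl X\simeq\Cl Y$ and $S_n\sim k(-K_{X_\eta})$ in $\Pic X_\eta$; this is impossible since $S_n^2=-n<0$ for $n$ pairwise skew $(-1)$-curves while $\bigl(k(-K_{X_\eta})\bigr)^2=3k^2\geq 0$. I would replace your B\'ezout argument for the non-Cartier statement by this computation, and revisit the degree statement separately.
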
 
In the light of Theorem~\ref{cubics}, it is then natural to consider the following cases:
\setcounter{case}{0}

\begin{case}{$X$ is an extremal Mori fibre space, i.e.~ $\rk \Cl Y =2$, $X\to \PS^1$ has irreducible and reduced fibres  and $\rho(X_{\eta})=1$.}
\end{case}  
It is known that $X$ admits another model as a Mori fibre space \cite{BCZ}. Indeed, $Y$ is the midpoint of a link 
\[
\xymatrix{
\quad & X \ar@{-->}[rr] \ar[dl]\ar[dr] &&\widetilde{X}\ar[dr]\ar[dl]& \quad\\
\PS^1 & \quad& Y & \quad & Z}
\]
where $Z=Y_{3,3}\subset \PS(1^5,2)$ is a codimension $2$ terminal Fano $3$-fold with one point of Gorenstein index $2$ at $P=(0{:}0{:}0{:}0{:}0{:}0{:}1)$; $Y\dashrightarrow Z$ can be described as follows. Introduce a variable of weight $2$  \[y= \frac{a_3}{x_1}=\frac{b_3}{x_0},\] then $Z$ is the complete intersection:
\[\left \{ \begin{array}{c}
a_3-yx_1=0\\
b_3-yx_0=0  
\end{array}\right.
\]
The contraction $\widetilde{X}\to Z$ contracts the preimage of the plane $\{x_0{=}x_1{=}0\}$ to the point $P$, the map $X\dashrightarrow \widetilde{X}$ is the flop of the rational curves lying above the locus $\{ x_0{=}x_1{=}a_3{=}b_3{=}0\}$, and $\widetilde{X} \to Y$ is the blow up of the surface $\{a_3{=}b_3{=}0\}$.
Recall that $X$ is a section of the linear system $ \vert 3M+L\vert $ on the scroll $\F(0,0,1)$ (see \cite{BCZ} for notation conventions on scrolls); \cite{Ch08} shows that if $X$ is a general member of $\vert 3M+L \vert$,  $X$ is nonrational. I make the following conjecture: 
\begin{con}
If $X$ is a standard dP3 fibration, $X$ is bi-rigid. 
\end{con}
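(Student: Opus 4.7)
The plan is to establish bi-rigidity of $X$ by a Sarkisov-program analysis, showing that every birational map from $X$ to another Mori fibre space either preserves the fibration or factors through the link to $\widetilde{X}$ via $Z = Y_{3,3}\subset \PS(1^5,2)$ exhibited just above the conjecture. The approach adapts the Noether--Fano/maximal-singularity method to the terminal Gorenstein dP3 fibration setting, supplemented by the deformation-theoretic tools of Section~\ref{deformation} to handle singular centres.

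Concretely, given a birational map $\Phi\colon X\dashrightarrow V$ to a Mori fibre space, one pulls back a sufficiently general movable linear system $\mathcal{M}_V\subset |-nK_V|$ to obtain $\mathcal{M}$ on $X$. By the Noether--Fano criterion, either $\Phi$ is a square birational equivalence of Mori fibre spaces or $\mathcal{M}$ admits a maximal centre: an extremal divisorial extraction $\psi\colon E\to X$ whose log discrepancy with respect to $\mathcal{M}$ is strictly below the canonical threshold. Extracting this centre initiates a Sarkisov link, and the task is to show that the only centre producing a genuine new link is the curve $\{x_0{=}x_1{=}a_3{=}b_3{=}0\}$ underlying the known link to $\widetilde{X}$.

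The first, easier, step is to exclude horizontal curves, smooth points, and vertical centres as maximal centres. Horizontal curves and smooth points are ruled out by restricting $\mathcal{M}$ to the generic fibre (a nonsingular cubic surface) and applying classical test-curve/base-point-free arguments; vertical centres are excluded by Grinenko's $K^2$-condition for standard dP3 fibrations, using that the fibres have anticanonical square $3$. The hard step is the analysis of maximal centres at the singular (cDV) points of $X$: by Kawakita \cite{Kawk01,Kawk02,Kawk03}, for each cDV germ there may be infinitely many candidate divisorial extractions, and for each of these one must check, via a two-ray game as in Theorem~\ref{thm:2ray}, whether it produces a new Mori fibre space or returns to the known model.

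I expect this last step to be the main obstacle, precisely because Kawakita's infinite classification prevents any naive enumeration. The strategy I would pursue combines two ingredients: first, the explicit embedding of $X$ as a section of $|3M+L|$ in the scroll $\F(0,0,1)$ should pin down the singularities of $X$ and constrain the Cartier classes of admissible extractions; second, the specialisation argument of Lemma~\ref{lem:spe} allows one to smooth $X$ in a one-parameter family, reducing the enumeration of links to the nonsingular generic fibre. The Sarkisov links on the generic fibre are governed by the numerical classification of Section~\ref{tables} (cf.~the solutions involving $Y_{2,3}\subset \PS^5$, in particular Case~$35$ in Table~\ref{table1}, together with the rigidity result of \cite{IP96}); any link on $X$ specialises from a link on the smooth generic fibre, so this would leave exactly the known link to $\widetilde{X}$, yielding bi-rigidity.
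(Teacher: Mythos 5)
The statement you are trying to prove is stated in the paper as a \emph{conjecture}: the paper offers no proof of it, and indeed explicitly explains (Remark~\ref{rem11}) why the Noether--Fano approach you outline is currently out of reach in the terminal Gorenstein setting. Your proposal is therefore a strategy sketch rather than a proof, and you candidly flag the main obstacle yourself: the exclusion of maximal centres at the singular points of $X$, where by Kawakita's work there is an a priori infinite family of divisorial extractions. That acknowledged gap is the whole content of the problem; nothing in your outline closes it.

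Moreover, the specific device you propose to close it --- smoothing $X$ in a one-parameter family and enumerating Sarkisov links on the nonsingular generic fibre --- fails for three reasons. First, Lemma~\ref{lem:spe} and Theorem~\ref{thm:5} run in the wrong direction for your purposes: they let you specialise a link (or extremal contraction with \emph{Cartier} exceptional divisor) from the generic fibre to the central fibre, but they do not assert that every link on the central fibre arises this way. Second, a maximal extraction centred at a singular point of $X$ has no counterpart on the generic fibre of a smoothing, since the singular point disappears there; these are exactly the centres you cannot enumerate, and they are invisible to the generic fibre. Third, the paper itself stresses that birational rigidity is not deformation-invariant: \cite{ChGr} and \cite{CM04} produce families in which the pliability jumps, so rigidity of the generic fibre cannot be transported to the special fibre even in principle. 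The paper's $2$-ray game machinery (Theorem~\ref{thm:2ray}) is deliberately restricted to contractions whose exceptional divisor is Cartier on a small modification of the anticanonical model --- i.e.\ to the ``topological'' generators of $\Cl Y/\Pic Y$ --- precisely because the full maximal-singularity analysis needed for your argument is not available. Your first step (exclusion of horizontal curves, smooth points and vertical fibre-wise centres via test classes and a $K^2$-type condition) is plausible and consistent with the literature on del Pezzo fibrations, but the conjecture remains open after your argument.
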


\begin{case}{$X$ is not an extremal Mori fibre space, i.e.~ $\rk \Cl Y \geq 3$, and $\rho(X_{\eta})=1$.}
\end{case}
\begin{lem}{\cite{Kol96}}
\label{3planes}
Let $Y_4\subset \PS^4$ be a quartic hypersurface. If $Y$ contains three planes $\Pi_0, \Pi_1, \Pi_2$ such that $\Pi_0\cap\Pi_1\cap\Pi_2=\emptyset$, $Y$ is rational. 
\end{lem}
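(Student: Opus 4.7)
The plan is to construct an explicit birational map $\phi\colon Y\dashrightarrow \PS^1\times\PS^1\times\PS^1$ by using the three planes to project, and to invert it by intersecting $Y$ with the generic line cut out by the corresponding three hyperplanes. For each $i=0,1,2$, the linear pencil of hyperplanes of $\PS^4$ containing $\Pi_i$ is parametrised by $\PS^1$, and evaluation of the pair of linear forms cutting out $\Pi_i$ defines a projection $p_i\colon \PS^4\dashrightarrow \PS^1$. Writing $\bar p_i$ for the restriction of $p_i$ to $Y$, the map to study is
\[
\phi = (\bar p_0,\bar p_1,\bar p_2)\colon Y\dashrightarrow \PS^1\times\PS^1\times\PS^1.
\]
Both source and target are irreducible of dimension $3$, so it suffices to exhibit a rational inverse.

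First I would construct this inverse. Given a generic triple $\lambda=(\lambda_0,\lambda_1,\lambda_2)\in(\PS^1)^3$, let $H_{\lambda_i}\supset \Pi_i$ be the corresponding hyperplane in the $i$th pencil. For generic $\lambda$ the three hyperplanes are in general position, so their intersection $L_\lambda = H_{\lambda_0}\cap H_{\lambda_1}\cap H_{\lambda_2}$ is a line in $\PS^4$ not contained in $Y$; hence by B\'ezout, the scheme $L_\lambda\cap Y$ has length $4$. Because $\Pi_i\subset Y\cap H_{\lambda_i}$, each intersection $Q_i:=L_\lambda\cap\Pi_i$ automatically lies on $L_\lambda\cap Y$. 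I would then let $P(\lambda)$ be the residual point of $L_\lambda\cap Y$ after removing $Q_0,Q_1,Q_2$, and check that the rational map $\lambda\mapsto P(\lambda)$ inverts $\phi$; it follows that $Y$ is birational to $(\PS^1)^3$ and in particular rational.

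The one step I expect will require care, and where the hypothesis $\Pi_0\cap \Pi_1\cap \Pi_2=\emptyset$ really enters, is the genericity verification: for a generic $\lambda$ the points $Q_0,Q_1,Q_2$ must be three \emph{distinct} reduced points of $L_\lambda$, and the residual $P(\lambda)$ must be a genuine fourth point distinct from them. If $\Pi_0\cap \Pi_1\cap \Pi_2$ contained a point $q$, every line $L_\lambda$ would pass through $q$ and the three planes would each contribute to the intersection multiplicity at $q$, collapsing three of the four intersection points and spoiling the inverse. Under the given hypothesis, by contrast, the loci where $Q_i=Q_j$ or $Q_i=P(\lambda)$ cut out proper closed subsets of $(\PS^1)^3$, so $\phi$ is birational on a dense open set, and the rationality of $Y$ follows.
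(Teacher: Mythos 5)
The paper gives no proof of this lemma, deferring entirely to the citation of [Kol96]; your argument is precisely the classical one from that source: the three pencils of hyperplanes through $\Pi_0,\Pi_1,\Pi_2$ define a map $Y\dashrightarrow(\PS^1)^3$, which is inverted by assigning to $\lambda$ the residual fourth point of the length-$4$ scheme $L_\lambda\cap Y$ after removing the three points $L_\lambda\cap\Pi_i$. The construction, the Bézout count, and the role of the hypothesis $\Pi_0\cap\Pi_1\cap\Pi_2=\emptyset$ in keeping the three plane contributions distinct are all correctly identified, so the proposal is correct and matches the intended proof.
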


\begin{cor} Let $X\to Y$ be as above. Assume that $\rho(X_{\eta})=1$, 
if there are at least $3$ planes lying in at least $2$ distinct reducible fibres of $X$, $Y$ is rational. More precisely, if $X$ has either at least two reducible fibres, one of which is the union of $3$ planes or if $X$ has at least $3$ reducible fibres, $Y$ is rational. 
\end{cor}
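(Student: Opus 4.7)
The plan is to reduce each case to Lemma~\ref{3planes}, which guarantees rationality of $Y$ when it contains three planes with empty triple intersection. The key geometric fact is: if $\Pi_i, \Pi_j \subset Y$ are planes in distinct reducible fibres of $X$, lying respectively in the hyperplanes $H_{t_i}, H_{t_j}$ of the pencil through $\Pi$, then $H_{t_i} \cap H_{t_j} = \Pi$ (since both contain $\Pi$), so $\Pi_i \cap \Pi_j \subseteq \Pi$; writing $\ell_k = \Pi_k \cap \Pi$, a line in $\Pi \simeq \PS^2$, one has $\Pi_i \cap \Pi_j = \ell_i \cap \ell_j$, a point in $\Pi$.

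For the first case---where the fibre over $t_1$ is $\Pi_1 \cup \Pi_2 \cup \Pi_3 \subset H_{t_1}$ and another fibre over $t_2$ contains a plane $\Pi_4 \subset H_{t_2}$---I examine the three triples $(\Pi_i, \Pi_j, \Pi_4)$ with $i < j \in \{1,2,3\}$, each having triple intersection $\ell_i \cap \ell_j \cap \ell_4 \subset \Pi$. If any one is empty, Lemma~\ref{3planes} applies. Otherwise the four lines $\ell_1, \ldots, \ell_4$ share a common point $p$: for each pair $(i,j) \subset \{1,2,3\}$, the point $p_{ij} = \ell_i \cap \ell_j$ lies on $\ell_4$, and since $p_{12}, p_{13}, p_{23}$ all lie on $\ell_4$ while any two distinct lines meet in at most a point, these points coincide. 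Then the four distinct planes $\Pi, \Pi_1, \Pi_2, \Pi_3$ all lie in $H_{t_1}$ and pass through $p$; their tangent planes at $p$ project to four distinct lines in $\PS(T_p H_{t_1}) \simeq \PS^2$, all contained in the projectivised tangent cone of $Y$ at $p$ intersected with this $\PS^2$. This restricted curve then has degree at least $4$, forcing $\mult_p Y = 4$ (since $\deg Y = 4$), so $Y$ is a projective cone with vertex $p$. The standard discrepancy computation for the blow-up of $p$ yields $a = 3 - \mult_p Y = -1$, contradicting the terminal hypothesis on $Y$.

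For the second case---at least three reducible fibres with planes $\Pi_i \subset H_{t_i}$, $i = 1, 2, 3$---the triple intersection is $\ell_1 \cap \ell_2 \cap \ell_3 \subset \Pi$. If these three lines in $\PS^2$ are not concurrent, Lemma~\ref{3planes} applies. In the degenerate sub-case where they concur at a point $p \in \Pi$: since $\ell_i$ is a linear component of the cubic $\{t_{i,0}\bar{b}_3 - t_{i,1}\bar{a}_3 = 0\} \subset \Pi$, three distinct members of the pencil $\langle \bar{a}_3, \bar{b}_3 \rangle$ vanish at $p$, which forces $\bar{a}_3(p) = \bar{b}_3(p) = 0$, i.e.~$p \in \Sing(Y) \cap \Pi$. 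If either a fourth reducible fibre or an additional plane in one of the three fibres is available, then five planes pass through $p$ with four of them lying in a common hyperplane $H_{t_i}$, and the argument of the first case produces the same contradiction. The principal obstacle, and the hardest step of the proof, is the sub-case of exactly three fibres of type $\text{plane}+\text{quadric}$ whose three associated lines concur: rationality of $Y$ in this degenerate configuration must be established via a direct birational analysis of $Y$ near the node $p$, rather than by a further appeal to Lemma~\ref{3planes}.
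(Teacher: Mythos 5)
Your reduction to Lemma~\ref{3planes} via the observation that planes in distinct fibres meet only inside $\Pi$ (so that $\Pi_i\cap\Pi_j=\ell_i\cap\ell_j$) is exactly the paper's strategy; the paper, however, disposes of the degenerate configurations in one line by citing \cite[Section 4]{Kal07b}, where it is shown that the lines $\ell_i=\Pi_i\cap\Pi$ attached to any three planes in reducible fibres are distinct and non-concurrent. Your attempt to re-derive this is where the proof breaks. In your first case the step ``the restricted curve has degree at least $4$, forcing $\mult_p Y=4$'' is wrong: that curve has degree $\mult_p Y$ only if $\PS(T_pH_{t_1})$ is \emph{not} a component of the projectivised tangent cone, and in the configuration at hand it necessarily is one. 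Indeed $Y\cap H_{t_1}=\Pi\cup\Pi_1\cup\Pi_2\cup\Pi_3$ is a union of four planes through $p$, so the quadratic part of a local equation of $Y$ at $p$ vanishes on four distinct $2$-planes of the $3$-space $T_pH_{t_1}$ and is therefore divisible by the linear form defining $H_{t_1}$; the tangent cone is then a quadric of rank at most $2$. This is perfectly compatible with a terminal point: the cA$_3$ singularity $\{xy+f_4(z,w)=0\}$ is terminal, has multiplicity $2$, and its hyperplane section $\{x=0\}$ is exactly four planes through the origin. (Recall that cutting by a hyperplane through $p$ can only increase multiplicity, so $\mult_p(Y\cap H_{t_1})=4$ gives $\mult_p Y\le 4$, not $\ge 4$.) So no contradiction with terminality is obtained and your first case is not closed.

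In the second case you explicitly leave the concurrent sub-case (three fibres of type plane plus quadric whose lines $\ell_1,\ell_2,\ell_3$ meet at a base point $p$ of the pencil of cubics on $\Pi$, necessarily a singular point of $Y$) unproved, deferring it to ``a direct birational analysis near $p$'' that is not supplied. Since both degenerate branches are precisely the content of the configuration result the paper imports from \cite{Kal07b}, the proposal as written establishes the corollary only under the additional, unjustified hypothesis that the relevant lines are non-concurrent. To repair it you should either quote the non-concurrency statement from \cite[Section 4]{Kal07b}, as the paper does, or give a genuine argument excluding (or directly handling) the concurrent configurations; the multiplicity count you propose cannot do this.
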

\begin{proof}
This follows from the possible configurations of planes lying in reducible fibres obtained as in \cite[Section 4]{Kal07b}.
\end{proof}

Assume that $X\to \PS^1$ has $\rho(X_{\eta})=1$, and that $X\to \PS^1$ has $1$ or $2$ reducible fibres, each containing a quadric ($\rk \Cl Y=3$ or $4$). 
Among the generators of $\Cl Y/\Pic Y$, there is a surface $S$ such that $A_Y^2\cdot S= 2$, i.e.~ there is a quadric lying on $Y$. Denote $f\colon \widetilde{X}\to X \to Y$ a small factorialisation of $X$ and $Y$ and note that there is an extremal divisorial contraction $\varphi \colon \widetilde{X} \to \widetilde{X}_1$ such that $\widetilde{S}=f_{\ast}^{-1}S=\Exc \varphi$ (possibly after flops of $\widetilde{X}$). 

Observe that $\widetilde{X}_1$ is the small modification of a terminal Gorenstein Fano $3$-fold $Y_1=Y_{2,3}\subset \PS^5$. For any divisor $D_1\subset \widetilde{X_1}$, the proper transform $D$ of $D_1$ on $\widetilde{X}$ is such that $A_{Y}^2\cdot D\leq A_{Y_1}^2\cdot D_1$ and the inequality is strict when $D$ intersects the quadric $S$ (see the proof of \cite[Theorem 3.2]{Kal07b}).
Note that $\Pi$ and all planes contained in reducible fibres of $X\to \PS^1$ do intersect  the quadric $S$ and since $\rho(X_{\eta})=1$, $\widetilde{X_1}$ is weak-star Fano: the methods of the previous subsection apply.

More precisely, as $Y_1$ is terminal Gorenstein and has $\rk \Cl(Y_1)\geq 2$, unless $Y_1$ has a structure of Conic Bundle or the MMP on $\widetilde{X_1}$ consists of one divisorial contraction of type $10,11,12,13,14,18,20$ or $21$ in Table~\ref{table2}, $Y$ is rational.
 
\begin{exa}
In particular, this gives potential examples of rational cubic fibrations that are not geometrically rational.
\end{exa}

\begin{case} {$X$ is not an extremal Mori fibre space, i.e.~ $\rk \Cl Y \geq 3$, and $\rho(X_{\eta})>1$.}
\end{case}

\begin{pro}
If $\rho(X_{\eta})\geq 3$, $X$ is rational.  If $\rho(X_{\eta})=2$ and either $\Cl Y/\Pic Y$ is not generated by planes or $X$ has at least one reducible fibre, $X$ is rational.
\end{pro}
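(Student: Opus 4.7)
The plan is to apply Theorem~\ref{cubics} to the generic fibre $X_{\eta}$ of the del Pezzo fibration $\pi\colon X\to \PS^1$, which is a smooth cubic surface over $\eta=\C(t)$. If $X_\eta$ is rational over $\eta$, then $X$ is birational over $\C$ to $\PS^1\times \PS^2$, hence rational. By the Colliot-Th\'el\`ene/Kollár-Miyaoka-Mori theorem recalled above, since $\C$ is algebraically closed, $X_\eta(\eta)\neq \emptyset$. To establish rationality of $X_\eta$ via Theorem~\ref{cubics}, it therefore suffices to exhibit an $S_2$ on $X_\eta$, or an $S_3, S_4, S_5$, or $S_6$.

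In the case $\rho(X_\eta)\geq 3$, Theorem~\ref{cubics}(1) shows that there exist at least two independent $S_n$-classes besides the hyperplane class. I do a case analysis: if some $S_n$ with $n\in\{2,3,4,5,6\}$ occurs, one concludes directly via parts (2)--(4). The only remaining subcase is that the independent $S_n$-classes are both $S_1$'s, so $X_\eta$ contains two lines $L_1, L_2$ defined over $\eta$. If they are skew, $\{L_1, L_2\}$ is an $S_2$ and (3) applies. Otherwise, the residual line of the tritangent plane they span is a third $\eta$-line $L_3$, forming a triangle; using the combinatorics of the 27 lines -- a triangle of lines on a cubic has several common transversals, all permuted by $\Gal(\overline{\eta}/\eta)$ and each skew to most of the 27 -- one extracts a Galois-stable pair of skew lines, producing an $S_2$.

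For $\rho(X_\eta)=2$, there is a single $S_n$-class beyond the hyperplane. If $n\in\{2,3,4,5,6\}$ one concludes as before. The remaining case is $n=1$: $X_\eta$ has a line $L$ defined over $\eta$ and no other Galois-invariant classes. The completion of $L$ to $X$ is a divisor $D$ whose image $\overline{D}$ on $Y$ has anticanonical degree $1$ (since $D$ restricts to the line $L_t\subset X_t\subset \PS^3$ on a general fibre), hence $\overline{D}$ is a plane in $Y$. Thus the contribution of the generic fibre to $\Cl Y/\Pic Y$ consists of planes. Under the first alternative of the hypothesis, $\Cl Y/\Pic Y$ is not generated by planes, so by the generators' lemma there must be a non-plane generator arising from a reducible fibre of $\pi$; under the second alternative, $\pi$ has a reducible fibre by assumption. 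In either case, reducible fibres of $X$ exist and the classification of their components in \cite[Section 4]{Kal07b} provides additional planes in $Y$ lying in these fibres. One then invokes Lemma~\ref{3planes}: combining $\Pi$ with two further planes (from $\overline{D}$ and/or reducible fibres) whose triple intersection is empty yields rationality of $Y$, and hence of $X$.

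The main obstacle is the last subcase ($\rho(X_\eta)=2$, $n=1$). One must verify that, under either alternative of the hypothesis, three planes $\Pi_0, \Pi_1, \Pi_2 \subset Y$ with $\Pi_0\cap\Pi_1\cap\Pi_2=\emptyset$ can always be exhibited among $\Pi$, $\overline{D}$, and the plane components of reducible fibres. This reduces to a careful analysis of the possible reducible fibre configurations for $\pi$ -- in terms of the factorisations of the cubic forms $a_3, b_3$ in equation~\eqref{eq:2} at degenerate points of $\PS^1$ -- and of their intersection patterns with $\Pi$; the delicate point is excluding configurations in which every triple of planes is concurrent along a common point or line.
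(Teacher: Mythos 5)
Your overall strategy --- Swinnerton-Dyer's criterion on the generic fibre when some $S_n$ with $n\geq 2$ exists, and Koll\'ar's three-planes lemma otherwise --- is the same as the paper's, but both of your remaining subcases have genuine gaps. In the subcase $\rho(X_\eta)\geq 3$ with two meeting $\eta$-lines, your mechanism for producing an $S_2$ does not work: the triangle cut out by a tritangent plane has no common transversals among the $27$ lines (any other line meets that plane in a single point of $L_1\cup L_2\cup L_3$, so it meets one side of the triangle, and meets two only if it passes through a vertex), and in any case a ``transversal'' would meet the triangle rather than be skew to anything, so there is no Galois-stable skew pair to extract by this route. The paper sidesteps the issue entirely: the two concurrent $\eta$-lines complete to two planes of $Y$ which, together with $\Pi$, have empty triple intersection, and Lemma~\ref{3planes} gives rationality of $Y$ over $\C$ (not of $X_\eta$ over $\eta$). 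Your route could in fact be repaired by a lattice argument --- writing $L_1=E_1$ and $L_2=F_{12}$ (or $G_2$), the Galois-invariant class $H-E_1-F_{12}$ is the class of a line $E_2$ skew to $E_1$, so $\{E_1,E_2\}$ is an $S_2$ --- but that is not the argument you gave.

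For $\rho(X_\eta)=2$ you correctly reduce to exhibiting three planes of $Y$ with empty triple intersection among $\Pi$, the completion $\overline{D}$ of the $S_1$, and plane components of reducible fibres, but you explicitly leave this verification open, and it is precisely the nontrivial content of the paper's second Claim: $\overline{D}$ meets $\Pi$ in a single point (if it met $\Pi$ in a line it would be contained in a hyperplane section of $Y$ and hence in a reducible fibre), every plane contained in a fibre meets $\Pi$ in a line, and the analysis of fibre configurations in \cite[Section 4]{Kal07b} shows that the lines cut on $\Pi$ by three such fibre-planes are distinct and non-concurrent, so at least one of them misses the point $\Pi\cap\overline{D}$. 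Without this step (or a substitute analysis of the reducible fibres), the second assertion of the Proposition is not established.
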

\begin{proof}

Theorem~\ref{cubics} shows that unless $\Pic X_{\eta}$ is generated by the class of a hyperplane section and divisors of type $S_1$, $X_{\eta}$ is rational. But then, as \cite{Co96} shows that $X \to \PS^1$ is birational to a cubic fibration $X'\to \PS^1$ with reduced and irreducible fibres and $X_{\eta}\simeq X'_{\eta}$, $X\to \PS^1$ is rational.

We now turn to the case when $\rho(X_{\eta})>1$ and $X_{\eta}$ does not contain any $S_n$ for $n\geq 2$.

The proposition follows from the following Claims.
\begin{cla}
If $\rho(X_{\eta})\geq 3$ and if $\Pi', \Pi''$ are two planes on $X\to \PS^1$ that arise as completions of divisors of type $S_1$ on $X_{\eta}$, then $\Pi\cap\Pi'\cap\Pi''= \emptyset$.
\end{cla}
Any $S_1$ lying on $X_{\eta}$ completes to a plane $\Pi'$ that meets $\Pi$ in a point. 
Indeed, if $\Pi$ and $\Pi'$ met in a line, the image of $\Pi'$ on $Y$ would be contained in a hyperplane section of the original quartic $Y$, and $\Pi'$ would have to be contained in a reducible fibre. If $X_{\eta}$ contains two distinct $S_1$, these cannot be skew (otherwise they would form an $S_2$) and therefore up to coordinate change on $\PS(x, x_2, x_3,x_4)$, $X_{\eta}$ contains the lines
\begin{eqnarray*}
L=\{x_2{=}x_3{=}0\}\\
L'=\{x_2{=}x_4{=}0\}
\end{eqnarray*}
 so that $Y$ contains the planes $\{x_0{=}x_1{=}0\}$,  $\{x_2{=}x_3{=}0\}$ and $\{x_2{=}x_4{=}0\}$ and by Lemma~\ref{3planes}, $Y$ is rational.

\begin{cla}
If there are at least $3$ planes lying in reducible fibres of $X\to \PS^1$ then we may choose $\Pi''$ lying in a reducible fibre of $X$ such that $\Pi\cap\Pi'\cap\Pi''= \emptyset$.
\end{cla}
Since any plane contained in a fibre of $X\to \PS^1$ intersect $\Pi$ in a line and that given any $3$ such planes, \cite{Kal07b} shows that the $3$ associated lines are distinct and non-concurrent, we may choose one plane that does not contain $\Pi\cap\Pi'$. 
\end{proof}

We have proved the following.
\begin{pro}
Let $Y_4\subset \PS^4$ be a quartic hypersurface that contains a plane. If $6\leq \rk \Cl Y\leq 16$, $Y$ is rational. 
\end{pro}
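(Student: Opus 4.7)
The plan is to combine the rationality criteria from the preceding subsections by using a rank bound on $\Cl Y$ coming from the del Pezzo fibration $\pi\colon X\to \PS^1$ obtained by blowing up the plane $\Pi\subset Y$. Since $\Cl Y$ is generated by $\pi^{\ast}\mathcal{O}_{\PS^1}(1)$, the completions of divisors generating $\Pic X_{\eta}$, and the irreducible components of the reducible fibres of $\pi$, one has
\[
\rk \Cl Y \;\leq\; 1 + \rho(X_{\eta}) + \sum_i (m_i - 1),
\]
where $m_i$ denotes the number of components of the $i$-th reducible fibre. A cubic surface has at most three components, so $m_i\leq 3$, and by a degree count a fibre with $m_i=3$ is necessarily a union of three planes.

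First I would split the analysis according to the value of $\rho(X_{\eta})$. If $\rho(X_{\eta})\geq 3$, the preceding Proposition gives that $X$, and hence $Y$, is rational. If $\rho(X_{\eta})=2$, the same Proposition yields rationality except possibly when $\Cl Y/\Pic Y$ is generated by planes and $\pi$ has no reducible fibre; but in that subcase the inequality above forces $\rk \Cl Y \leq 3$, contradicting the hypothesis $\rk \Cl Y\geq 6$. Hence this exceptional configuration is automatically excluded.

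The remaining case is $\rho(X_{\eta})=1$, where the bound reads $\rk \Cl Y \leq 2 + \sum_i(m_i-1)$. The assumption $\rk \Cl Y\geq 6$ then forces $\sum_i(m_i-1)\geq 4$, and a quick enumeration with $m_i\in\{2,3\}$ shows that this occurs only when either $\pi$ has at least three reducible fibres, or $\pi$ has exactly two reducible fibres both with $m_i=3$ (i.e.~ both being unions of three planes). In either situation the Corollary of the previous subsection applies via Lemma~\ref{3planes} and $Y$ is rational.

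The only substantive point is the rank bookkeeping above and the small combinatorial enumeration in the $\rho(X_{\eta})=1$ case; the upper bound $\rk \Cl Y\leq 16$ plays no role in the argument itself, but simply reflects the maximal rank allowed for a non-factorial quartic containing a plane, as determined by the possible numbers and types of reducible fibres in a weak-star dP$3$ fibration sitting over such a $Y$.
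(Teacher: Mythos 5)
Your argument is correct and is essentially the paper's own: the Proposition is stated with ``We have proved the following'' after the case analysis on $\rho(X_{\eta})$ that immediately precedes it, and your proof simply assembles those same ingredients --- the generation of $\Cl Y$ from \cite[Lemma 4.1]{Kal07b}, the Proposition handling $\rho(X_{\eta})\geq 2$, and the Corollary to Lemma~\ref{3planes} handling $\rho(X_{\eta})=1$ --- with the rank bookkeeping made explicit. Your observation that $\rk \Cl Y\geq 6$ with $\rho(X_{\eta})=1$ forces either at least three reducible fibres or two fibres that are each unions of three planes, both of which fall under that Corollary, is exactly the implicit reduction in the paper.
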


\section{Examples and geometric realizability of numerical Sarkisov links}
\label{examples}
\subsection{Examples}

In this section, I construct examples of non-factorial Fano $3$-folds with terminal Gorenstein singularities. I use the Tables of numerical Sarkisov links to recover some known examples and construct some new ones. 
\begin{exa}
Let $X_{2g-2}\subset \PS^{g+1}$ be a nonsingular Fano $3$-fold of genus $g \geq 7$ and let $P\in X$ be a point that does not lie on any line of $X$ (such a point exists by \cite{Isk78}). Let $\widetilde{X}\to X$ be the blow up of $P$. Then $\widetilde{X}$ is a weak-star Fano $3$-fold with Picard rank $2$. The anticanonical model $Y$ of $\widetilde{X}$ is a terminal Gorenstein non-factorial Fano $3$-fold of genus $g-4$. The map $\widetilde{X}\to Y$ is small and contracts the preimages of conics through $P$ to points (\cite{Tak89} proves that there are finitely many such conics).
\begin{enumerate}
\item[1.] When $g=7$, $Y$ is a quartic $3$-fold that is the midpoint of a link where both contractions of the Sarkisov link are of type E$2$. The link is a self-map of $X_{12}\subset \PS^9$; the centre of the link is a rational quartic $3$-fold $Y$ with $\rk \Cl Y=2$.
\item[2.] When $g\geq 8$, \cite{Tak89} lists all possible constructions starting with a nonsingular Fano $3$-fold $X_{2g-2}\subset \PS^{g+1}$. Takeuchi uses Hodge theoretical computations to show that some numerical Sarkisov links are not realizable. Here, since I allow terminal Gorenstein singularities, it is not clear that these links can be excluded (see Remark~\ref{Hodge}).   
\end{enumerate}
\end{exa}
Let $X$ be a nonsingular Fano $3$-fold with $\rho(X)=1$  and $\Gamma \subset X$ a curve such that $X=Z_1$ and $\Gamma$ is the centre of $\varphi$ for one case appearing in Table~\ref{table1} (resp.~ of Table~\ref{table2}).
Let $\widetilde{X} \to X$ be the blow up of $X$ along $\Gamma$. By construction, $A_{\widetilde{X}}^3=4$ (resp.~ $2g-2$ for $g\geq 4$), so that if $A_{\widetilde{X}}$ is nef, it is big and  $\widetilde{X}$ is a Picard rank $2$ weak Fano $3$-fold. Observe that when $\Gamma$ is an intersection of members of $\vert A_X\vert$, then $A_{\widetilde{X}}$ is nef and big.

The anticanonical map $f \colon \widetilde{X} \to Y$ maps to a Gorenstein Fano $3$-fold with canonical singularities. If, in addition, $(A_{\widetilde{X}})^2\cdot D>0$ for every effective divisor $D$, $Y$ has terminal singularities and $f$ is small. Still by construction, in this case, $f$ is not an isomorphism because  $e\neq 0$, and $Y$ is a non-factorial terminal Gorenstein Fano $3$-fold with $\rho(Y)=1$, $\rk \Cl Y=2$.

\begin{thm}\cite{Sh79, Re80, Tak89, IP99}\label{lineconic}
Let $X_{2g-2}\subset \PS^{g+1}$ be a nonsingular anticanonically embedded Fano $3$-fold of index $1$. If $g\geq 5$ (resp.~ $g\geq 6$), there exists a line (resp.~ a smooth conic) on $X$. 
For any $g\geq 5$, if $X$ contains a line and a smooth conic, it also contains a rational normal cubic curve.
\end{thm}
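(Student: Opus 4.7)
The plan is to establish the three existence statements via Hilbert scheme and deformation arguments, leveraging the Iskovskikh-Mukai classification of index $1$ Fano $3$-folds of Picard rank $1$ to supply concrete models in which parameters can be counted. For the existence of a line when $g \geq 5$, I would study the Hilbert scheme $\Sigma_L(X)$ parametrising lines $\ell \subset X$, i.e.~smooth rational curves with $A_X \cdot \ell = 1$. A normal bundle computation gives $\deg N_{\ell/X} = -K_X \cdot \ell - 2 = -1$, so whenever $\Sigma_L(X)$ is nonempty it is generically of pure dimension $1$. Non-emptiness follows by passing to the Mukai model of $X_{2g-2}$ in each relevant genus --- $X_{2,2,2} \subset \PS^6$ for $g=5$, and a linear section of an appropriate Grassmannian, orthogonal Grassmannian or symplectic Grassmannian for $g=6,\ldots,10,12$ --- and running an incidence variety count $\dim\{(\ell,X) : \ell \subset X\}$ relative to the two projections; this shows that every smooth $X$ in each family contains at least one line.

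For smooth conics when $g \geq 6$, the same scheme-theoretic setup applies to $\Sigma_C(X)$, the Hilbert scheme of smooth conics on $X$. Here $\deg N_{C/X} = 0$ and the expected dimension is $2$. For $g \geq 8$, I would give an inductive argument: the projection $\pi_\ell : X \dashrightarrow X'$ from a line $\ell$ (which exists by the first step) is a birational map to a Fano $3$-fold $X'$ of genus $g-3$, and conics on $X$ meeting $\ell$ transversely at one point correspond under $\pi_\ell$ to lines on $X'$, the latter again existing by the first step whenever $g-3 \geq 5$. For the remaining cases $g=6$ and $g=7$, I would invoke the explicit Mukai models (linear section of $\Gr(2,5)$ and of the spinor tenfold respectively) and a direct incidence-variety count on the appropriate Hilbert scheme of conics inside the ambient homogeneous variety.

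Finally, for the rational normal cubic when $X$ contains both a line $\ell$ and a smooth conic $C$, the plan is to smooth a reducible degree-$3$ curve $\ell \cup C$ inside $X$. If $\ell$ and $C$ meet transversely at a single point, $\ell \cup C$ is a connected nodal genus-$0$ curve whose normal sheaf in $X$ has $h^0$ large enough that a standard Hartshorne-Hirschowitz-type smoothing argument produces a $1$-parameter deformation of $\ell \cup C$ inside $X$ to a smooth rational curve $C_3$ of anticanonical degree $3$ spanning a $\PS^3 \subset \PS^{g+1}$, i.e.~a rational normal cubic. If $\ell$ and $C$ are a priori disjoint, I would first vary $C$ inside the $2$-dimensional family $\Sigma_C(X)$ and use that the locus of conics meeting $\ell$ is cut out by a single effective condition, producing the required incidence configuration. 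The main obstacle is the non-emptiness of $\Sigma_L(X)$ and $\Sigma_C(X)$: these do not admit a clean uniform proof across all genera and one has to enter the Iskovskikh-Mukai classification case by case to perform the incidence-variety parameter counts, which is the substantive content of the theorem.
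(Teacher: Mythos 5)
The paper does not prove this statement: it is quoted verbatim from the literature (Shokurov for the existence of lines, Takeuchi and Iskovskikh--Prokhorov for conics and for the twisted cubic obtained by smoothing a line plus a conic), so there is no internal proof to compare yours against. Your third step --- smoothing the nodal genus-$0$ curve $\ell\cup C$ inside $X$ via an $H^1$-vanishing for its normal sheaf, after first moving $C$ in its two-dimensional family so that it meets $\ell$ quasi-transversally in one point --- is exactly the standard argument, and the incidence condition is indeed a single condition because the universal conic dominates $X$ and $\rho(X)=1$ forces the surface swept by conics to meet $\ell$.

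The first two steps, however, contain genuine gaps. For lines, the incidence count over the Mukai models only shows that the locus of $X$ containing a line is dense in each family; to reach \emph{every} smooth $X$ you must add that the relative Hilbert scheme of lines is proper over the parameter space (so the image is closed) and, to get dominance in the first place, exhibit one member whose family of lines is exactly one-dimensional and invoke upper semicontinuity of fibre dimension --- without that supplement ``$\dim I\geq\dim B$'' proves nothing. This is why the existence of a line is a theorem of Shokurov rather than a routine parameter count; also note $h^0(N_{\ell/X})$ can equal $2$ (when $N=\mathcal{O}(1)\oplus\mathcal{O}(-2)$), so ``generically of pure dimension $1$'' is not automatic. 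For conics, your induction is miscalibrated: blowing up a line and taking the anticanonical model (equivalently, projecting from $\ell$) drops the genus by $2$, not $3$ --- this is immediate from the relations $A_Z^3=A_{Z'}^3-2A_{Z'}\cdot\Gamma-2+2p_a(\Gamma)$ of Lemma~\ref{lem:10} with $(p_a,\deg)=(0,1)$ --- and, more seriously, the resulting $3$-fold $X'$ is in general only a \emph{terminal Gorenstein} Fano $3$-fold (the flopping curves of the small anticanonical morphism get contracted to nodes), so the first step, which is stated for nonsingular $X$, does not apply to $X'$, and a line on $X'$ through a singular point need not lift to a conic on $X$. Either the induction must be run entirely within the nonsingular locus with an argument that some line on $X'$ avoids the singularities, or one must fall back on the case-by-case analysis in the cited references.
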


\begin{exa}\cite{ Isk78, IP99}
Let $X=X_{2g'-2}\subset \PS^{g'+1}$ be a nonsingular (or more generally terminal Gorenstein factorial) Fano $3$-fold of index $1$ such that $A_X$ is very ample and let $\Gamma$ be a line lying on $X$.  As above, let $\widetilde{X}\to X$ be the blow up along $\Gamma$ and let $\widetilde{X}\to Y$ be the anticanonical map of $\widetilde{X}$. Recall the following result of Iskovskikh's: 
\begin{thm}\cite{Isk78}
If $\Gamma \subset X_{2g'-2}$ is a line on a nonsingular Fano $3$-fold of genus $g'$ and Picard rank $1$, and if $\widetilde{X}\to X$ is the blow up of $X$ along $\Gamma$, then $\widetilde{X}$ is a small modification of a terminal Gorenstein Fano $3$-fold $Y_{2g-2}$ of index $1$, Picard rank $1$ and genus $g= g'-2$. 
\end{thm}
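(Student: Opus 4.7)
The plan is to identify $Y$ with the anticanonical model of $\widetilde X$ and verify all required properties directly from the blowup geometry. Set $E = \Exc(\pi)$, so $A_{\widetilde X} = \pi^*A_X - E$. Since $\Gamma$ is a line, $p_a(\Gamma) = 0$ and $A_X \cdot \Gamma = 1$, so Lemma~\ref{lem:10} gives
\begin{align*}
A_{\widetilde X}^3 &= 2(g'-2)-2, & A_{\widetilde X}^2\cdot E &= 3, \\
A_{\widetilde X}\cdot E^2 &= -2, & E^3 &= 1.
\end{align*}
In particular $\widetilde X$ is smooth with $\rho(\widetilde X) = 2$, generated by $\pi^*A_X$ and $E$, and any Fano anticanonical model of $\widetilde X$ must have genus $g = g'-2$.

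The next step is to show that $A_{\widetilde X}$ is nef and big. A fibre of $E\to\Gamma$ has $A_{\widetilde X}$-degree $1$; for any irreducible curve $C\not\subset E$, writing $\pi_*C = d\cdot D$ with $D\subset X$ irreducible, the projection formula yields $A_{\widetilde X}\cdot C = d\,A_X\cdot D - E\cdot C$. Choosing an anticanonical divisor $H\in|A_X|$ containing $\Gamma$, the inclusion of ideal sheaves $I_H\subset I_\Gamma$ shows that the local intersection satisfies $(D\cdot H)_p\geq (D\cdot\Gamma)_p$ at every $p\in D\cap\Gamma$, so summing gives $(D\cdot\Gamma)_X\leq A_X\cdot D$ and a similar bound controls $E\cdot C$. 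This proves nefness, with equality achieved only by strict transforms of curves of \emph{maximal contact} with $\Gamma$---in particular, lines meeting $\Gamma$ at one point. Bigness follows from $A_{\widetilde X}^3>0$, which is guaranteed for $g'\geq 4$; the existence of $\Gamma$ itself is Theorem~\ref{lineconic} for $g'\geq 5$.

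Thus the anticanonical morphism $\varphi\colon\widetilde X\to Y$ exists by the base-point free theorem. To show $\varphi$ is small, note that $E$ is not contracted since $A_{\widetilde X}^2\cdot E = 3>0$, and no strict transform of a divisor of $X$ has $A_{\widetilde X}$-degree zero because $A_X$ is ample. Hence $\Exc\varphi$ is a finite union of rational curves. By standard theory of small modifications of smooth weak Fano $3$-folds, $Y$ is Gorenstein with terminal compound Du Val singularities along the contracted curves, and $A_Y$ is ample by construction, so $Y$ is Fano. Since $\varphi$ contracts the single $A_{\widetilde X}$-trivial extremal ray, $\rho(Y)=\rho(\widetilde X)-1=1$; the index equals $1$ because $\varphi^*(-K_Y)=A_{\widetilde X}=\pi^*A_X-E$ is primitive in the rank-$2$ lattice $\Pic\widetilde X$ and $\varphi^*$ is injective.

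The main obstacle in this program is the analysis of the $A_{\widetilde X}$-trivial extremal ray: smallness of $\varphi$ can only be ensured once one knows that the curves of maximal contact with $\Gamma$ (lines through a point of $\Gamma$, bisecant conics, etc.) never form a positive-dimensional family sweeping out a divisor. For a prime Fano $3$-fold with $g'\geq 5$ this is a classical but delicate fact about the Hilbert scheme of lines through a fixed line, and is the substantive geometric content of Iskovskikh's argument that goes beyond the formal MMP manipulations sketched above.
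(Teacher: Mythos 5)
The paper does not actually prove this statement: it is quoted verbatim from \cite{Isk78} inside an example, so there is no internal argument to compare yours against. Judged on its own terms, your proposal gets the formal part right. The intersection numbers from Lemma~\ref{lem:10} are correct ($A_{\widetilde X}^3=2(g'-2)-2$, $A_{\widetilde X}^2\cdot E=3$, $A_{\widetilde X}\cdot E^2=-2$, $E^3=1$), and the nefness argument is sound once phrased properly: since $\Gamma$ is a line in the anticanonical embedding, it is the scheme-theoretic intersection of the hyperplane sections containing it, so $|\pi^*A_X-E|$ is basepoint free and $A_{\widetilde X}$ is semiample; bigness and the index and Picard rank computations then go through as you say.

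The genuine gap is the one you name yourself in the last paragraph, and it is not a side issue --- it is the entire content of the theorem. ``Small modification of a \emph{terminal} Gorenstein Fano $3$-fold'' requires two things your sketch does not deliver: (a) no irreducible divisor $S\subset\widetilde X$ has $(A_{\widetilde X})^2\cdot S=0$ (otherwise the anticanonical map contracts a divisor and $Y$ is only canonical, not terminal), and (b) the $A_{\widetilde X}$-trivial curves form a finite set. Point (a) is exactly condition (i)/(iii) in the paper's Definition~\ref{dfn:1} of weak-star Fano, and since $\rho(\widetilde X)=2$ it reduces to a concrete inequality: writing $S\equiv k\,\pi^*A_X-mE$ with $m=\mult_\Gamma\pi_*S$, one needs $k(2g'-3)-3m>0$, i.e.\ a bound on the multiplicity of a degree-$k$ anticanonical hypersurface section along a line. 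Point (b) is the statement that the lines meeting $\Gamma$ and the conics bisecant to $\Gamma$ are finite in number, which is a nontrivial fact about the Hilbert schemes of low-degree curves on prime Fano $3$-folds (and is where the hypotheses ``nonsingular, $\rho=1$'' are really used). As written, your argument proves only that $Y$ is a canonical Gorenstein Fano $3$-fold of genus $g'-2$; the passage from ``canonical'' to ``terminal with small anticanonical morphism'' is Iskovskikh's theorem, not a formal consequence of the blow-up numerology.
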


By construction, $Y$ is not factorial and its divisor class group is generated by the hyperplane section and by a surface $\overline{E}=f(\PS(\mathcal{N}^v_{\Gamma/X}))$, which is the image by the anticanonical map of a cubic scroll. 

 The blow up $f$ is one side of a Sarkisov link with midpoint along $Y$. Note that the rational map between the two sides of the Sarkisov link $Z_1=X_{2g'-2}\dashrightarrow \widetilde{Z}_1$ is Iskovskikh's double projection from a line \cite{Isk78}, that enabled him to classify Fano $3$-folds of the first species.
\begin{enumerate} 
\item[1.]Case $30$ in Table~\ref{table1} is a geometric construction that was known classically \cite{Be77, BCZ}. 
Let $X=X_{2,2,2}\subset \PS^6$ be a codimension $3$ complete intersection of quadrics in $\PS^3$ and $l\subset X$ be a line. Then the other contraction in the link starting with the projection from $l$ is a conic bundle with discriminant of degree $7$.  Conversely, given a plane curve $\Delta\subset \PS^2 $ of degree $7$, \cite{BCZ} constructs standard conic bundles with ramification data a $2$-to-$1$ admissible cover $N \to \Delta$.  When $\deg \Delta=7$, there are $4$ deformation families of standard conic bundles and Case $30$ corresponds to the generic even theta characteristic case. 
By \cite{Sh83}, the standard conic bundle $\widetilde{X}$ is non rational.
\item[2.]\cite{Isk78}
When $X$ is nonsingular, the link that occurs is Case $17, g=4$ in Table $2$ for $g'=6$, Case $13, g=5$ for $g'=7$, Case $8,g=6$ for $g'=8$, Case $3,g=7$ for $g'=9$, Case $3, g=8$ for $g'=10$ and Case $1,g=10$ for $g'=12$. 
 \end{enumerate}
Note that Case $31$ in Table~\ref{table1}, and Cases $18, g=4$, $12, g=5$, and Cases $11,12, g=6 $do not occur \cite{Isk78, IP99} if $Z_1$ is nonsingular. 
One can describe explicitly the inverse rational map $\widetilde{Z}_1 \dashrightarrow Z_1$ for $g \geq 7$ by choosing the curve $C$ carefully on an appropriate $\widetilde{Z_1}$ \cite{IP99}.
\end{exa}

\begin{exa}\cite{Tak89}
Let $X=X_{2g'-2}\subset \PS^{g'+1}$ be a nonsingular (or more generally terminal Gorenstein factorial) Fano $3$-fold of index $1$ such that $A_X$ is very ample and let $\Gamma$ be a smooth conic lying on $X$.  As above, let $\widetilde{X}\to X$ be the blow up along $\Gamma$ and let $\widetilde{X}\to Y$ be the anticanonical map of $\widetilde{X}$. 
\begin{thm}\cite{Tak89}
Let $\Gamma \subset X_{2g'-2}$ be a conic on a nonsingular Fano $3$-fold of genus $g'$ and Picard rank $1$, and $\widetilde{X}\to X$ the blow up of $X$ along $\Gamma$. If $g'\geq 7$ and $\Gamma$ is general, then $\widetilde{X}$ is a small modification of a terminal Gorenstein Fano $3$-fold $Y_{2g-2}$ of index $1$, Picard rank $1$ and genus $g= g'-3$. If $g'\geq 9$, the same holds for any conic $\Gamma \subset X$.
\end{thm}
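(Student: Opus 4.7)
The plan is to verify, in order, that (i) the numerical invariants of $\widetilde{X}$ match those of a $2g-2$-degree Fano with $g=g'-3$, (ii) the anticanonical class $A_{\widetilde{X}}$ is nef and big, (iii) the anticanonical morphism is small with finitely many exceptional curves, and (iv) the image $Y$ is a terminal Gorenstein Fano of Picard rank $1$, index $1$, and genus $g$. The generality of $\Gamma$ (or the size of $g'$) enters only at step (ii)–(iii), to rule out problematic curves on $X$.

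First I would compute the basic numerics. The blow-up $\pi\colon\widetilde{X}\to X$ along the smooth conic $\Gamma$ satisfies $K_{\widetilde{X}}=\pi^{\ast}K_X+E$, so using Lemma~\ref{lem:10} with $p_a(\Gamma)=0$ and $A_X\cdot\Gamma=\deg\Gamma=2$, one gets
\[
A_{\widetilde{X}}^3=A_X^3-2(A_X\cdot\Gamma)-2+2p_a(\Gamma)=(2g'-2)-4-2=2(g'-3)-2=2g-2.
\]
Since $\Gamma$ is a smooth conic in $X\subset\PS^{g'+1}$ it spans a plane, so the hyperplanes of $\PS^{g'+1}$ containing $\Gamma$ form a codimension-$3$ subspace; hence $h^0(\widetilde{X},A_{\widetilde{X}})=h^0(X,\mathcal{I}_{\Gamma}(1))=(g'+2)-3=g'-1=g+2$, which is the expected $h^0$ of an index-$1$, genus-$g$ Fano. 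This already identifies the ``target'' numerically.

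Next I would prove that $A_{\widetilde{X}}$ is nef and ample away from finitely many curves. The exceptional divisor $E$ is a ruled surface over $\Gamma\simeq\PS^1$; on its ruling $\ell$ one has $A_{\widetilde{X}}\cdot\ell=-E\cdot\ell=1>0$, and on the section class of $E$ the intersection is non-negative by direct computation. For every other irreducible curve $C\subset\widetilde{X}$, write $C=\widetilde{C}$, the strict transform of an irreducible curve $C_0\subset X$, and compute $A_{\widetilde{X}}\cdot\widetilde{C}=A_X\cdot C_0-m(C_0,\Gamma)$, where $m(C_0,\Gamma)$ denotes the intersection multiplicity of $C_0$ with $\Gamma$. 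This is the key step and the main obstacle: I must show that no irreducible curve $C_0\subset X$ satisfies $m(C_0,\Gamma)>A_X\cdot C_0$, and that only finitely many satisfy equality. For $g'\geq 9$ this is the statement that $X$ contains no bisecant line, no plane quartic containing $\Gamma$ as a component, etc., which follows from the classification of low-degree rational curves on Fano $3$-folds (Theorem~\ref{lineconic} together with Hilbert-scheme dimension counts). For $g'=7,8$ one has to restrict to a general $\Gamma$, chosen in an open subset of the Hilbert scheme of conics on $X$ complementary to the locus where $\Gamma$ meets a line of $X$ or lies in a reducible hyperplane section.

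Assuming nefness is established, bigness follows from $A_{\widetilde{X}}^3=2g-2>0$ (valid for $g'\geq 7$), so the anticanonical morphism $f\colon\widetilde{X}\to Y=\Proj R(\widetilde{X},A_{\widetilde{X}})$ is birational onto a normal projective variety. The finitely many curves on which $A_{\widetilde{X}}$ is zero cannot fill out a divisor (by the very argument in the previous paragraph, any irreducible surface on $\widetilde{X}$ has strict transform containing infinitely many curves with $A_{\widetilde{X}}$-degree $>0$), so $f$ is small. Finally, since $\widetilde{X}$ is smooth and $f$ is a small $K$-trivial extremal contraction, standard local analytic normal form of such contractions on smooth $3$-folds shows $Y$ has at worst terminal Gorenstein (in fact cDV) singularities; extremality gives $\rho(Y)=\rho(\widetilde{X})-1=1$; the class $A_Y=f_{\ast}A_{\widetilde{X}}$ is ample since it pulls back to $A_{\widetilde{X}}$, so $Y$ is Fano; and primitivity of $A_{\widetilde{X}}=\pi^{\ast}A_X-E$ in $\Pic\widetilde{X}\simeq\Z\langle\pi^{\ast}A_X,E\rangle$ together with $\rk\Pic Y=1$ forces index $1$. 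The genus count $g(Y)=h^0(A_Y)-2=g$ was already verified.
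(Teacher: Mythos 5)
This statement is quoted from \cite{Tak89} and the paper gives no proof of its own, so there is nothing internal to compare against; what can be judged is whether your outline would actually establish the result. Your skeleton is the right one and is essentially Takeuchi's: the numerics are correct ($A_{\widetilde{X}}^3=2g'-8=2g-2$, $h^0(A_{\widetilde{X}})=g'-1=g+2$ since the conic spans a plane), the reduction of nefness to the inequality $m(C_0,\Gamma)\leq A_X\cdot C_0$ for irreducible curves $C_0\subset X$ is the correct formulation, and once one knows $A_{\widetilde{X}}$ is nef with only finitely many trivial curves, the deductions of smallness, terminal Gorenstein (cDV) singularities of $Y$, $\rho(Y)=1$, and index $1$ via primitivity of $\pi^{\ast}A_X-E$ all go through as you say.

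The genuine gap is that the entire mathematical content of the theorem sits inside the step you flag as ``the key step and the main obstacle'' and then do not carry out. The hypotheses $g'\geq 7$ with $\Gamma$ general, versus $g'\geq 9$ for arbitrary $\Gamma$, exist precisely to control the curves $C_0$ with $m(C_0,\Gamma)\geq A_X\cdot C_0$ (bisecant lines, trisecant conics, and lines contained in $\Gamma$'s plane section) and the surfaces swept out by them; asserting that this ``follows from the classification of low-degree rational curves on Fano $3$-folds together with Hilbert-scheme dimension counts'' names the tools but does not perform the count, and it is exactly here that the numerical thresholds $7$ and $9$ must emerge. Two smaller points: (i) you need $(A_{\widetilde{X}})^2\cdot S>0$ for every irreducible surface $S$ (in particular for $S=E$, where it equals $4$, and for strict transforms $S_0\sim kA_X$, where one must bound the multiplicity of $S_0$ along $\Gamma$) --- your parenthetical argument for smallness only works after finiteness of trivial curves is already proved; and (ii) the conclusion $\rho(Y)=1$ requires that the anticanonical morphism be a \emph{nontrivial} small contraction, i.e.\ that at least one $A_{\widetilde{X}}$-trivial curve exists (e.g.\ a bisecant line to $\Gamma$), which your proposal never verifies --- otherwise $Y=\widetilde{X}$ has Picard rank $2$ and the statement fails.
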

Note that when $\widetilde{X}$ is a small modification of a terminal Gorenstein Fano $3$-fold $Y$ with $\rho(Y)=1$, the divisor class group of $Y$ is generated by a hyperplane section and by a surface $\overline{E}= f(\PS(\mathcal{N}_{\Gamma/X}^v))$ which has degree $4$. More precisely, $\mathcal{N}_{\Gamma/X}= \mathcal{O}_{\PS^1}(d)\oplus\mathcal{O}_{\PS^1}(-d)$, for $d=0,1$ or $2$.
If $d=0$, $f_{\vert E}\colon \PS^1 \times \PS^1\to \overline{E}$ is induced by a divisor of bidegree $(1,2)$.
If $d=1$, $f_{\vert E}\colon \F_2 \to \overline{E}$ is induced by $\vert s+3f\vert$ and if $d=2$, $f_{\vert E}\colon \F_4 \to \overline{E}$ is induced by $\vert s+4f\vert$.

The blow up $f$ is one side of a Sarkisov link with midpoint along $Y$ and corresponds to one of Cases $25$ or $26$ in Table~\ref{table1}, or Cases $15,16$ for $g=4$, $9,10,11$ for $g=5$, $6,7$ for $g=6$, $2$ for $g=7$ or $1$ for $g=9$. \cite{Tak89} shows that for nonsingular Fano $3$-folds $X_{2g-2}=Z_1$, the Cases indicated by $\bullet$ in Tables~\ref{table1} and \ref{table2} are the only geometrically realizable constructions.
\end{exa}
\begin{exa} Let $X_{2g'-2}\subset \PS^{g+1}$ be a nonsingular Fano $3$-fold with $g'\geq 6$. By Theorem~\ref{lineconic}, there is a rational normal cubic curve $\Gamma$ lying on $X$. If $\widetilde{X}\to Y$ is small, then if $g'=7$, we are in case $18$ or $19$ of Table~\ref{table1}, $Y$ is a terminal Gorentsein factorial quartic $3$-fold that is rational; and if $g'\geq 8$, we are in one of Cases $9$ or $11, g=4$, $5, g=5$, $3, g=6$ or $1,g=8$ of Table~\ref{table2}.  
\end{exa}
\begin{thm}\cite{Mo84, MM83}
\label{exi}
Let $k=\overline{k}$ be a field of characteristic $0$ and let $d>0$ and $g\geq 0$ be integers.
There exists a nonsingular curve $C$ lying on a nonsingular quartic surface $S_4\subset \PS^3_k$  with $(p_a C, \deg C)=(g,d)$ if and only if 
\[g=d^2/8+1\mbox{ or }  g<d^2/8\] and $(g,d)\neq (3,5)$. 
\end{thm}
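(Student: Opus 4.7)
The plan is to exploit the K3 structure of a smooth quartic $S\subset \PS^3$: by adjunction in $\PS^3$ one has $K_S=0$, and adjunction on $S$ applied to a smooth curve $C\subset S$ of arithmetic genus $g$ and degree $d=C\cdot H$ (with $H$ the hyperplane class, $H^2=4$) gives $C^2=2g-2$.

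For necessity, the Hodge index theorem applied to $C$ and $H$ yields $d^2=(C\cdot H)^2\geq C^2 H^2 = 8(g-1)$, i.e.~$g\leq d^2/8+1$. Equality forces $C$ to be numerically proportional to $H$ in $\mathrm{NS}(S)\otimes\Q$; since $\mathrm{NS}(S)$ is torsion free and $H$ is primitive (K3 self-intersections are even, so no class of square $1$ exists), one gets $C\sim nH$, hence $(g,d)=(2n^2+1,4n)$, accounting for the boundary case $g=d^2/8+1$. The integer value $g=d^2/8$ occurs only when $d=4n$, $g=2n^2$; setting $D=C-nH$ one computes $D^2=-2$, $D\cdot H=0$, $D\neq 0$, and Riemann-Roch on the K3 surface $S$ gives $\chi(\mathcal{O}_S(D))=1$, so $D$ or $-D$ is effective, both contradicting the ampleness of $H$. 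For $(g,d)=(3,5)$, the class $D=C-H$ satisfies $D^2=-2$ and $D\cdot H=1$, so $D$ is effective and represented by a smooth rational curve (a line) $L\subset S$; but then $C\cdot L = (H+L)\cdot L = 1-2=-1$ forces $L$ to be a component of $C$, contradicting the irreducibility of $C$.

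For sufficiency, the strategy is to construct, for each admissible $(g,d)$, a polarized K3 surface $(S,H)$ of degree $4$ whose Néron-Severi group contains $H$ and a class $C$ with $C^2=2g-2$, $C\cdot H=d$, such that $H$ is very ample (so that $S$ embeds as a smooth quartic in $\PS^3$) and $|C|$ contains a smooth irreducible member. The existence of the polarized K3 is guaranteed by the surjectivity of the period map: any even lattice of signature $(1,r-1)$ with $r\leq 10$ admitting a primitive embedding into the K3 lattice $\Lambda_{K3}=U^{\oplus 3}\oplus E_8(-1)^{\oplus 2}$ is realized as $\mathrm{NS}(S)$ for some K3. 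One then invokes Saint-Donat's criteria on K3 surfaces: a nef divisor with self-intersection $\geq 4$ is very ample unless it meets a $(-2)$-curve with multiplicity $\leq 1$ or belongs to a special elliptic configuration; a nef and big divisor class contains a smooth irreducible member unless it is a multiple of an elliptic fibre class.

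The main obstacle is the arithmetic case analysis showing that for every $(g,d)$ not excluded above, one can choose the rank-$2$ lattice with Gram matrix $\bigl(\begin{smallmatrix} 4 & d \\ d & 2g-2 \end{smallmatrix}\bigr)$, primitively embed it into $\Lambda_{K3}$, and verify that the resulting K3 surface $S$ avoids Saint-Donat's exceptional configurations, so that both $H$ and $|C|$ behave as required. This requires delicate $(-2)$-class computations in the Picard lattice to rule out lines $L$ with $L\cdot H\leq 1$ and elliptic pencils containing $C$; in the residual boundary cases one smooths $C$ by deformation within the moduli space of polarized K3 surfaces. This is the technical heart of Mori's argument in \cite{Mo84}.
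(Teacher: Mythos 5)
The paper does not actually prove this statement: it is imported from Mori \cite{Mo84} and Mori--Mukai \cite{MM83}, so the only baseline for comparison is the cited source, whose route (adjunction and Hodge index on the quartic K3, $(-2)$-class arithmetic in $\mathrm{NS}(S)$, surjectivity of the period map, Saint-Donat's criteria) is exactly the one you follow. Your ``only if'' direction is complete and correct: the Hodge-index bound $g\le d^2/8+1$; the identification of the boundary case with $C=nH$, using that $H$ is primitive because the K3 lattice is even; the exclusion of $g=d^2/8$ via the class $D=C-nH$ with $D^2=-2$, $D\cdot H=0$ and Riemann--Roch; and the exclusion of $(3,5)$ via the line $L=C-H$ with $C\cdot L=-1$.

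The gap is in the ``if'' direction, which you state only as a strategy and explicitly defer. The content of the theorem lies precisely in showing that for \emph{every} pair $(g,d)$ with $g<d^2/8$ or $g=d^2/8+1$ and $(g,d)\neq(3,5)$, a K3 surface $S$ with $\mathrm{NS}(S)=\Z H\oplus\Z C$ and Gram matrix $\bigl(\begin{smallmatrix}4&d\\ d&2g-2\end{smallmatrix}\bigr)$ can be arranged so that (a) $H$ lies in the ample cone and is very ample --- which requires controlling the $(-2)$-classes $E$ with $E\cdot H\le 2$ and the hyperelliptic/elliptic configurations of Saint-Donat --- and (b) $|C|$ is nef with an irreducible nonsingular member. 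This amounts to a careful analysis of the integral solutions of $4a^2+2abd+(2g-2)b^2=-2$ and $=0$ in the rank-two lattice, and it is exactly where $(3,5)$ and $g=d^2/8$ must re-emerge as the \emph{only} obstructions; without that analysis the equivalence is established in one direction only. As a faithful outline of Mori's argument your proposal is fine, but as a proof it delivers only necessity.
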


\begin{exa}
I now use Theorem~\ref{exi} to show that some constructions that appear in Tables~\ref{table1} and \ref{table2} may be geometrically realizable.

\begin{enumerate}
\item[1.]
Let $C\subset \PS^3$ be a nonsingular curve that is an intersection of nonsingular quartic surfaces with $(p_a C, \deg C)= (15,11)$. Let $X$ be the blow up of $\PS^3$ along $C$, assume that $X$ is the small modification of  a terminal quartic hypersurface $Y \subset \PS^4$. The linear system $\vert \mathcal{O}_{\PS^3} \vert $ determines a rational map $\PS^3 \dashrightarrow X_{10}\subset \PS^7$ that corresponds to the inverse of Case $28$ in Table~\ref{table1}. The midpoint $Y_4\subset \PS^4$ is a non-factorial rational quartic $3$-fold; $\Cl Y$ is generated by the hyperplane section and the image in $Y$ of $E$ or $D$. Note that this rational map provides an example of a rational Fano $3$-fold of genus $6$.
\item[2.]\cite{IP99}
Let $C\subset \PS^3$ be a nonsingular curve that is an intersection of nonsingular quartic surfaces with $(p_a C, \deg C)= (7,9)$. Let $X$ be the blow up of $\PS^3$ along $C$, then since $C$ is an intersection of nonsingular quartic surfaces, $X$ is the small modification of  a terminal quartic hypersurface $Y \subset \PS^4$. The linear system $\vert \mathcal{O}_{\PS^3}(15)-4C \vert$ determines a rational map $\PS^3 \dashrightarrow X_{12}\subset \PS^8$ that corresponds to the inverse of Case $19$ in Table~\ref{table1}. The midpoint $Y_4\subset \PS^4$ is a non-factorial rational quartic $3$-fold; $\Cl Y$ is generated by the hyperplane section and the image in $Y$ of $E$ or $D$.
\item[3.]
Let $C\subset \PS^3$ be a nonsingular curve that is an intersection of nonsingular quartic surfaces  with $(p_a C, \deg C)= (3,8)$. Let $X$ be the blow up of $\PS^3$ along $C$, assume that $X$ is the small modification of  a terminal quartic hypersurface $Y \subset \PS^4$. The linear system $\vert \mathcal{O}_{\PS^3}\vert$ determines a rational map $\PS^3 \dashrightarrow V_3\subset \PS^4$ that corresponds to the inverse of Case $34$ in Table~\ref{table1}. The midpoint $Y_4\subset \PS^4$ is a non-factorial rational quartic $3$-fold; $\Cl Y$ is generated by the hyperplane section and the image in $Y$ of $E$ or $D$. Note that in this case $V_3\subset \PS^4$ would necessarily be singular, because it would be rational. 
\item[4.]
Let $C\subset \PS^3$ be a nonsingular curve that is an intersection of nonsingular quartic surfaces with $(p_a C, \deg C)= (1,7)$. Let $X$ be the blow up of $\PS^3$ along $C$, assume that $X$ is the small modification of  a terminal Gorenstein Fano $3$-fold $Y_{2,2,2} \subset \PS^6$ that is non-factorial and rational. The linear system $\vert \mathcal{O}_{\PS^3}\vert$ determines a rational map $\PS^3 \dashrightarrow X_{22}$ that corresponds to the inverse of Case $2, g=5$ in Table~\ref{table2}. 
\item[5.]
Let $C\subset \PS^3$ be a nonsingular curve lying that is an intersection of nonsingular quartic surfaces  with $(p_a C, \deg C)= (6,8)$. Let $X$ be the blow up of $\PS^3$ along $C$, assume that $X$ is the small modification of  a terminal Gorenstein Fano $3$-fold $Y_{10} \subset \PS^7$ that is non-factorial and rational. The linear system $\vert \mathcal{O}_{\PS^3}\vert$ determines a rational map $\PS^3 \dashrightarrow X_{22}$ that corresponds to the inverse of Case $9,g=6$ in Table~\ref{table2}. 
\end{enumerate}
\end{exa}
\begin{exa}\cite{IP99}
Let $C\subset \PS^3$ be a nonsingular non hyperelliptic curve lying on a nonsingular quartic surface with $(p_a C, \deg C)= (3,7)$. Let $X$ be the blow up of $\PS^3$ along $C$, $X$ is the small modification of  a terminal Gorenstein Fano $3$-fold $Y_{12} \subset \PS^8$,= that is non-factorial and rational. The linear system $\vert \mathcal{O}_{\PS^3}\vert$ determines a rational map $\PS^3 \dashrightarrow X_{16}$ that corresponds to the inverse of Case $3,g=7$ in Table~\ref{table2}. 
\end{exa}

\subsection{Some remarks on geometric realizability}

Classically, it has been shown that numerical Sarkisov links were not geometrically realizable by using constraints on the Hodge numbers of blow ups of nonsingular varieties along smooth centres or constraints on Euler characteristics of fibrations. In the case of divisorial contractions of factorial terminal Gorenstein $3$-folds, I have been unable to extend these results so as to use them to rule out some numerical Sarkisov links.
 It is easy to show the following weakened version:    
\begin{lem}
\label{htexc}
Let $Z$ be a nonsingular weak Fano $3$-fold and $\varphi \colon Z\to Z_1$ an extremal divisorial contraction with centre along a curve $\Gamma$ and such that $Z_1$ is a terminal Gorenstein Fano $3$-fold. 
Then
\[
h^{1,2}(Z)\leq h^{1,2}(\mathcal{Z}_{1, \eta})+ p_a(\Gamma),
\]
where $p_a(\Gamma)$ denotes the arithmetic genus of $\Gamma$ and $\mathcal{Z}_1$ a smoothing of $Z_1$. 
\end{lem}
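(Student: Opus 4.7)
The plan is to extend $\varphi$ to a $1$-parameter family in which both source and target become smooth, apply the classical blow-up formula for Hodge numbers on the generic fibre, and then descend to $Z$ using invariance of Hodge numbers under smooth proper deformations.

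First I would apply Theorem~\ref{thm:7} together with Proposition~\ref{pro:1} to produce a $1$-parameter smoothing $\mathcal{Z}_1\to \Delta$ of $Z_1$ with nonsingular generic fibre. Since $Z$ is smooth, $\Exc\varphi$ is automatically Cartier, so Theorem~\ref{thm:5} extends $\varphi$ to a projective $\Delta$-morphism $\Phi\colon \mathcal{Z}\to \mathcal{Z}_1$, where $\mathcal{Z}\to \Delta$ is a deformation of $Z$. Smoothness being open in the total space, after shrinking $\Delta$ the family $\mathcal{Z}\to \Delta$ is smooth and proper, and hence Hodge numbers are locally constant; in particular $h^{1,2}(Z)=h^{1,2}(\mathcal{Z}_\eta)$.

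Next I would identify $\Phi_\eta$. By Theorem~\ref{thm:5}(2), $\Phi_\eta$ is an extremal divisorial contraction between smooth $3$-folds, and the scheme-theoretic image $\mathcal{G}=\Phi(\Exc \Phi)\subset \mathcal{Z}_1$ specialises to $\Gamma$ at $t=0$. Thus $\mathcal{G}_\eta$ is positive-dimensional, and Mori's classical classification forces $\Phi_\eta$ to be the blow-up of the smooth curve $\Gamma_\eta=\mathcal{G}_\eta$ in $\mathcal{Z}_{1,\eta}$. The standard blow-up formula then gives
\[
h^{1,2}(\mathcal{Z}_\eta) \;=\; h^{1,2}(\mathcal{Z}_{1,\eta}) + h^{0,1}(\Gamma_\eta) \;=\; h^{1,2}(\mathcal{Z}_{1,\eta}) + g(\Gamma_\eta).
\]

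To conclude, I would compare $g(\Gamma_\eta)$ with $p_a(\Gamma)$. Since $\Gamma_\eta$ is smooth, $g(\Gamma_\eta)=p_a(\Gamma_\eta)$, and upper semi-continuity of $h^1(\mathcal{O}_{\mathcal{G}_t})$ in the proper family $\mathcal{G}\to \Delta$ yields $p_a(\Gamma_\eta)\leq p_a(\Gamma)$; combining the three steps gives the desired inequality. The main obstacle I anticipate is the verification that $\mathcal{G}\to\Delta$ actually specialises to $\Gamma$ rather than to some scheme supported on $\Gamma$ with extra embedded components. This is plausible because $\Exc\Phi$ is a Cartier divisor flat over $\Delta$, but extracting the precise specialisation requires a careful analysis of proper pushforward through $\Phi$ when $\mathcal{Z}_1$ is singular along $\Gamma$, and it is exactly this slack which forces the statement to be an inequality rather than an equality.
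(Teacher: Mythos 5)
The paper states this lemma without proof (it is introduced only as a ``weakened version'' that is ``easy to show''), so I can only judge your argument on its own terms. Your strategy --- extend $\varphi$ over a one-parameter family, identify the generic fibre of the extended contraction as a blow-up of a smooth curve via Mori's classification, apply the blow-up formula for $h^{1,2}$, and specialise --- can be made to work, but two points need repair. First, Theorem~\ref{thm:5} runs in the opposite direction from the way you invoke it: it propagates a deformation of the \emph{source} $Z$ to a deformation of the target $Z_1$; it does not extend $\varphi$ over a smoothing of $Z_1$ chosen in advance via Theorem~\ref{thm:7}. You must start from a deformation of $Z$ and then argue separately that the induced family $\mathcal{Z}_1$ is a smoothing of $Z_1$ whose generic fibre computes the same $h^{1,2}$ as the smoothing named in the statement (this uses that $\Def(Z_1)$ is smooth, so all smoothings are deformation equivalent). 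Second, the delicate specialisation of the centre that you flag at the end can be sidestepped entirely: the observation you make on the generic fibre already applies on the central fibre. Since $Z$ is nonsingular and $\varphi$ is the contraction of a $K$-negative extremal ray whose exceptional divisor maps onto a curve, Mori's classification of extremal rays on nonsingular $3$-folds (the smooth case underlying \cite{Cut88}) forces $Z_1$ and $\Gamma$ to be smooth and $\varphi$ to be the blow-up of $\Gamma$; then $h^{1,2}(Z)=h^{1,2}(Z_1)+g(\Gamma)$ and $h^{1,2}(Z_1)=h^{1,2}(\mathcal{Z}_{1,\eta})$ because Hodge numbers are locally constant in a smooth proper family. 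No deformation of $Z$, and no analysis of $\mathcal{G}\to\Delta$, is then needed.

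The one place where your argument has a genuine gap rather than an inefficiency is tied to how one reads the hypothesis. The statement's insistence that $Z_1$ be terminal Gorenstein and its appeal to a smoothing only carry content if $Z_1$ is allowed to be singular, and by the observation above that cannot happen when $\varphi$ is literally the contraction of a single $K$-negative extremal ray of the smooth $Z$. It does happen when, as in Tables~\ref{table1} and~\ref{table2}, $Z_1$ is normalised to be the Picard rank one anticanonical model, i.e.\ $\varphi$ is the Mori contraction $Z\to Z'$ followed by the small map $Z'\to Z_1$. Under that reading your identification of $\Phi_\eta$ as a blow-up of a smooth curve in $\mathcal{Z}_{1,\eta}$ fails: $\Phi_\eta$ factors through a small modification of a possibly singular $\mathcal{Z}_{1,\eta}$, and Theorem~\ref{thm:5} does not apply verbatim since $\varphi$ is no longer the contraction of one extremal ray. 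The missing ingredient is the inequality $h^{1,2}(Z')\leq h^{1,2}(\mathcal{Z}_{1,\eta})$ comparing a small modification of a terminal Gorenstein Fano $3$-fold with its smoothing (a Namikawa--Steenbrink type comparison of Betti numbers); combined with the blow-up formula for $Z\to Z'$ and $g(\Gamma')=p_a(\Gamma)$ this yields the stated bound. You should either restrict to the literal hypothesis, in which case the lemma reduces to the classical blow-up formula, or supply this comparison.
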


%\begin{proof}
%The map $Z\to Z_1$ is a proper modification, so that the diagram
%\[
%\xymatrix{ E \ar[d]\ar[r]  & Z \ar[d]\\
%\Gamma \ar[r] & Z_1
%}
%\] is of cohomological descent and 
%\[
%\cdots \to H^k(Z_1, \Q) \to H^k(\Gamma, \Q) \oplus H^k(Z, \Q)\to H^k(E, \Q) \to H^{k+1}(Z_1, \Q)
%\]
%is a long exact sequence of mixed Hodge structures. Note that $Z, Z_1, E$ and $\Gamma$ are all compact and $Z$ is smooth, so that  $\Gr^W_j H^k=(0)$ for $j>k$, where $H^k$ denotes the cohomology of $Z_1, E$ or $\Gamma$ and $\Gr^W_kH^k(Z)= H^k(Z)$; in particular, the sequence
%\[
%0\to \Gr^W_3 H^3(Z_1)\to H^3(Z) \to \Gr_3^W H^3(E) \to \Gr_4^W H^3(Z_1) \to 0
%\]
%is exact. Note that by \cite{Cut88}, $Z_1$ is factorial. Let $\mathcal{Z}_1\to \Delta$ denote a smoothing of $Z_1$, and $\Sing Z_1=\{P_1, \cdots P_n\}$, then
%\[
%h^3(Z_1)=h^3(\mathcal{Z}_{1, \eta})- \sum^n_{i=1} h^3(B_{i, \eta}),
%\]
%where $B_i$ denotes the vanishing cycle associated to $P_i$.
 
%The surface $E$ is a $\PS^1$-bundle over $\Gamma$ as can be seen directly by writing down the equation in an affine chart of the blow up of $\Gamma$ near a singular point, because the singularities of $\Gamma$ are locally planar, i.e.~ of the form $\{x=f(y,z)=0\}$. In particular, by the Leray-Hirsch spectral sequence, $\Gr^W_3 H^3(E) \simeq H^1(\Gamma)$ is an isomorphism of weight $(1,1)$.  

%This shows that \[h^3(Z)\leq h^3(Z_1)+\dim \Gr^W_3 H^3(E)\leq h^3({\mathcal{Z}}_{1,\eta})+ h^1(\Gamma),\] and since $\Gamma$ is a l.c.i. curve and hence is Gorenstein, this is equivalent to
%\[
%h^{2,1}(Z)\leq h^{2,1}(\mathcal{Z}_{1, \eta})+ p_a(\Gamma).
%\]
%\end{proof}

\begin{rem}
\label{Hodge}
\cite{Kol89} shows that $Z$ and $\widetilde{Z}$ have the same analytic type of singularities. Since $h^{1,2}(Z)$ and $h^{1,2}(\widetilde{Z})$ can be expressed only in terms of local invariants of singularities and of $\rk \Cl Y$, where $Y$ is the anticanonical model of $Z$ and $\widetilde{Z}$, $h^{1,2}(Z)=h^{1,2}(\widetilde{Z})$ . In order to exclude some numerical Sarkisov links, I would need to find a lower bound for $h^{1,2}(Z)$ (resp.~ $h^{1,2}(\widetilde{Z})$).  
This would follow if the following question could be answered.
\begin{que}
Is it possible to relate $W_3H^4(Z)$ and $W_2H^3(Z)$ when $Z$ has terminal Gorenstein singularities? What if $Z$ is factorial?
\end{que}
\end{rem}
\begin{rem}
Observe that in order to determine that a numerical Sarkisov link is not realizable, it is enough to observe that no deformed (nonsingular) link exists between a Fano in the deformation family of $Z_1$ and a Fano in the deformation family of $\widetilde{Z}_1$. This has been used  in the previous subsections.
\end{rem}

Another question of interest would be to understand the geometric meaning of the correction term $e$ that appears in the tables of numerical Sarkisov links. The proof of Lemma~\ref{lem:4} shows that $e$ is the intersection of $E$, the exceptional divisor of the left hand side contraction, with the flopping locus of $Z\dashrightarrow \widetilde{Z}$.  The large values of $e$ that appear in the table suggest the following question.
\begin{que}
Let $f \colon Z\to Y$ be a small factorialisation and assume that $f^{-1}(P)$ is a chain of rational curves $\cap \Gamma_i$. If $E$ is the proper transform on $X$ of a Weil non-Cartier divisor passing through the singular point $P$, is it possible to have $E\cdot \Gamma_i>1$? The surface $E$ is a priori not Cohen Macaulay at $P$, but is it possible to bound this intersection number? 
\end{que}

\bibliography{bib}
\end{document}